\pdfoutput=1

\documentclass[%
fontsize=12pt,
paper=A4,  
parskip=half,  
DIV=calc,            
headinclude=true,    
footinclude=false,   
open=right,          
appendixprefix=true, 
bibliography=totoc,  
draft=false,      
BCOR=0mm,        
oneside
]{scrbook}  

\usepackage[utf8]{inputenc} 
\usepackage[ngerman,american]{babel}  

\usepackage{scrpage2} 


\usepackage[backend=biber, 
style=alphabetic, 
backref=true, 
natbib=true, 
hyperref=true, 
]{biblatex}  
\addbibresource{./source/refs.bib} 
\addbibresource{./source/tech.bib} 

\usepackage{pifont}

\usepackage{ifthen}

\newboolean{myaddcolophon}
\newboolean{myaddlistoftodos}
\newboolean{english_affidavit}

\usepackage{xspace}

\usepackage[usenames,dvipsnames]{xcolor}
\definecolor{DispositionColor}{RGB}{30,103,182} 

\usepackage[normalem]{ulem}

\usepackage{framed}

\usepackage{eso-pic}

\usepackage{enumitem}

\usepackage{units}

\newcommand{\mylinespread}{1.0}

\setboolean{myaddcolophon}{false}  

\setboolean{english_affidavit}{false}  


\newcommand{\myauthor}{Clemens Schiffer}  
\newcommand{\mytitle}{TV-regularized CT Reconstruction and Metal Artifact Reduction Using Inequality Constraints with Preconditioning}  
\newcommand{\mysubject}{SUBJECT}  
\newcommand{\mykeywords}{KEYWORDS}  

\newcommand{\myworktitle}{Master's Thesis}  
\newcommand{\mygrade}{Master of Science} 
\newcommand{\mystudy}{Mathematik} 
\newcommand{\myuniversity}{Karl-Franzens-Universität Graz} 
\newcommand{\myinstitute}{Institut für Mathematik und Wissenschaftliches Rechnen} 
\newcommand{\mysupervisor}{Dr.~Kristian Bredies} 
\newcommand{\myhometown}{Graz} 
\newcommand{\mysubmissionmonth}{August} 
\newcommand{\mysubmissionyear}{2018} 
\newcommand{\mysubmissiontown}{\myhometown} 



\usepackage{amsmath}
\newcounter{myclonecnt}


\newcommand{\abs}[2][]{\left|{#2}\right|_{#1}}
\newcommand{\norm}[2][]{\|{#2}\|_{#1}}

\newcommand{\scp}[3][]{\langle{#2},{#3}\rangle_{#1}}
\newcommand{\set}[2]{\{{#1} \ | \ {#2}\}}
\newcommand{\sett}[1]{\{{#1}\}}
\newcommand{\bigset}[2]{\bigl\{{#1} \ \bigl| \ {#2}\bigr\}}
\newcommand{\Bigset}[2]{\Bigl\{{#1} \ \Bigl| \ {#2}\Bigr\}}

\newcommand{\code}[1]{\texttt{#1}}
\newcommand{\ndel}[1]{\left(#1\right)}

\newcommand{\ie}{i.\,e.\ }
\newcommand{\eg}{e.\,g.\ }
\newcommand{\lsc}{lower semi-continuous}

\newcommand{\iu}{\mathrm{i}\mkern1mu}
\newcommand{\extR}{\mathbb{R}_\infty}

\newcommand{\nabh}{\nabla_h}
\newcommand{\divh}{\dive_h}
\newcommand{\laph}{\Delta_h}
\newcommand{\dd}{\; \mathrm{d}}
\newcommand{\diff}{\mathrm{d}}

\newcommand{\unull}{u^0}
\newcommand{\Om}{D}        
\newcommand{\Dm}{\Omega}   

\newcommand{\R}{R}
\newcommand{\Rh}{R_h}
\DeclareMathOperator{\dive}{div}
\DeclareMathOperator{\indicator}{\mathcal{I}}
\DeclareMathOperator{\id}{id}
\DeclareMathOperator{\range}{range}
\DeclareMathOperator{\dom}{dom}

\DeclareMathOperator{\four}{\mathcal{F}}        
\DeclareMathOperator{\fourld}{\mathcal{F}_1}    
\DeclareMathOperator{\fzdh}{\mathcal{F}_h}      
\DeclareMathOperator{\fldh}{\mathcal{F}_{h,1}}  

\DeclareMathOperator{\sinc}{sinc}
\DeclareMathOperator*{\argmin}{arg\,min}

\DeclareMathOperator{\D}{D}
\DeclareMathOperator{\TV}{TV}
\DeclareMathOperator{\BV}{BV}
\DeclareMathOperator{\epi}{epi}


\usepackage[protrusion=true,factor=900]{microtype}

\frenchspacing  
\usepackage[sc,osf]{mathpazo} 
\DeclareRobustCommand{\myacro}[1]{\textsc{\lowercase{#1}}} 

\setheadsepline{.4pt}[\color{DispositionColor}]

\addtokomafont{disposition}{\color{DispositionColor}}
\addtokomafont{caption}{\color{DispositionColor}\footnotesize}
\addtokomafont{captionlabel}{\color{DispositionColor}}

\usepackage[bottom]{footmisc}

\usepackage{enumitem}
\setlist{noitemsep}   

\usepackage[babel=true,strict=true,english=american,german=guillemets]{csquotes}

\linespread{\mylinespread}






\hyphenation{ex-am-ple hy-phen-ate}  





\usepackage{amsmath}
\usepackage{amsthm}
\usepackage{amsfonts}
\usepackage{amssymb}
\usepackage{dsfont}
\usepackage{mathtools}
\usepackage{algorithm}
\usepackage{placeins}
\usepackage{algorithmic}
\usepackage{siunitx}
\usepackage[labelformat=simple]{subcaption}
\usepackage[T1]{fontenc}
\usepackage{floatpag}

\swapnumbers
\theoremstyle{plain}
\newtheorem{lemma}{Lemma}[section]
\newtheorem{satz}[lemma]{Theorem}

\newtheorem{kor}[lemma]{Corollary}
\theoremstyle{definition}
\newtheorem{mydef}[lemma]{Definition}
\newtheorem{bsp}[lemma]{Example}

\numberwithin{equation}{chapter}
\pdfcompresslevel=9

\usepackage[]{hyperref}
\hypersetup{
pdftitle={\mytitle}, %
pdfauthor={\myauthor}, %
pdfsubject={\mysubject}, %
pdfcreator={Accomplished with: pdfLaTeX, biber, and hyperref-package. No animals, MS-EULA or BSA-rules were harmed.},
pdfproducer={\myauthor},
pdfkeywords={\mykeywords}
}

\begin{document}

\frontmatter                    


\thispagestyle{empty}  
\large  

\begin{center}
{\LARGE\bfseries\myworktitle}

\vfill

zur Erlangung des akademischen Grades eines\\[5mm]
{\LARGE\mygrade}\\[5mm]
der Studienrichtung \mystudy\\
an der \myuniversity

\vfill

über das Thema\\[10mm]

{\LARGE\bfseries\mytitle}

\vfill

eingereicht am\\
\myinstitute

Begutachter: \mysupervisor

von\\[5mm]
{\Large\bfseries\myauthor}\\[5mm]

\vfill

\mysubmissiontown, \mysubmissionmonth~\mysubmissionyear

\end{center}
\normalsize 

\newpage

\ifthenelse{\boolean{myaddcolophon}}{
  \newpage
  \thispagestyle{empty}  

  ~
  \vfill
  \mycolophon
}{}
\newpage



\newcommand{\textfield}[2]{
 \vbox{
   \hsize=#1\kern3cm\hrule\kern1ex
   \hbox to \hsize{\strut\hfil\footnotesize#2\hfil}
 }
}

\ifthenelse{\boolean{english_affidavit}}{
 \section*{Affidavit}
 I declare that I have authored this thesis independently, that I have
 not used other than the declared sources/resources, and that I have
 explicitly indicated all material which has been quoted either
 literally or by content from the sources used. The text document
 uploaded to \myacro{TUGRAZ}online is identical to the present master‘s
 thesis.

 \hbox to \hsize{\textfield{4cm}{Date}\hfil\hfil\textfield{7cm}{Signature}}
 \newpage
 }
{

}
%
%
%


\cleardoublepage
\phantomsection
\addcontentsline{toc}{chapter}{Abstract}
\chapter*{Abstract}
\label{ch_abstract}
Total variation(TV) regularization is applied to X-Ray computed tomography(CT) in an effort to reduce metal artifacts. 
Tikhonov regularization with $L^2$ data fidelity term and total variation regularization is augmented in this novel model by inequality constraints on sinogram data affected by metal to model errors caused by metal.  
The formulated problem is discretized and solved using the Chambolle-Pock algorithm.
Faster convergence is achieved using preconditioning in a Douglas-Rachford spitting method as well as Advanced Direction Method of Multipliers(ADMM).
The methods are applied to real and synthetic data demonstrating feasibility of the model to reduce metal artifacts.
Technical details of CT data used and its processing are given in the appendix. 

\textbf{Keywords:} Computed tomography, metal artifact, total variation, regularization, preconditioning

\chapter*{Abstract}
\label{ch_abstract_de}
Totalvariation(TV) Regularisierung wird auf Computertomographie(CT) angewandt um Metallartefakte zu reduzieren.
Tikhonov-Regularisierung mit einem $L^2$-Datenterm und TV-Regularisierungsterm 
wird in diesem neuen Modell um Ungleichungsnebenbedingungen erweitert 
die durch Metall verursachte Fehler in den Sinogrammdaten modellieren.
Das formulierte Problem wird diskretisiert und mithilfe des Chambolle-Pock Algorithmus gelöst.
Schnellere Konvergenz wird durch Präkonditionierung erzielt, diese wird in der Advanced Direction Mehod of Multipliers(ADMM) sowie einem Douglas-Rachford splitting Algorithmus angewandt.
Ergebnisse für reale und synthetische Daten zeigen die grundsätzliche Fähigkeit des Models Artefakte zu verringern.
Technische Details zu den verwendeten CT-Daten und deren Verarbeitung finden sich im Appendix.
\textbf{Schlagworte:} Computertomographie, Metallartefakte, Totalvariation, Regularisierung, Präkonditionierung 

\tableofcontents    

\listoffigures

\mainmatter

\chapter{Introduction} \label{ch_intro}
Two sets of problems are treated in this thesis, the first is reconstruction that is to recover a cross-section or profile of an object given only projection data of X-rays passing through the object from different directions.
This classical computed tomography(CT) problem is compounded in the context of medical imaging by the fact that X-rays are harmful to living tissue and thus it is desirable to increase  image quality of recovered images using as little projections as possible, thus making the problem more ill-posed.

The second related problem is that of metal artifacts, that is severe degradation of image quality due to metal objects present in the tissue which strongly attenuate or even block X-rays.
As many surgical procedures involve metal objects such as dental fillings, artificial hips or spine implants and the fact that magnetic resonance imaging is no alternative when ferromagnetic objects are present, efforts to improve image quality have been sought for some time and come in a variety of forms~\cite{MARoverview}.

The approach in this thesis employs variational regularization which enjoy widespread success in image processing.
Regularization can be seen as making a trade-off between data fidelity and assumed image properties, this is augmented by inequality constraints.
For the problem of removing metal artifacts sinogram data effected by metal is considered degraded and replaced by inequality constrains, reasoning that the data is unknown but expected to be greater then a certain threshold.
Many for metal artefact reduction are based around the idea of replacing this faulty sinogram data in various ways like inpainting by linearly fitting the gaps~\cite{chen2012ct} or expectation maximization~\cite{wang1996iterative}.
The model of this thesis focuses only on relative errors which are greater in projection affected by metal and as such is a general model that does not specifically take into account beam scattering or hardening. 
While the data used in this thesis comes from a dual source CT scanner was used, for the reconstruction only data from only one of the two sources was used as if it was a regular single source scanner.
Indeed some methods for metal artifact reduction are based on the fact that two sources with different energy spectra are available~\cite{MARoverview}~\cite{dualMAR}, but the purpose of this thesis is to evaluate the basic feasibility of removing artefacts with a general image processing techniques.

\section{Thesis Overview}
Section~\ref{ch_intro} provides mathematical background and consists of two parts.
In Subsection~\ref{sec_radon_trhm} a model for X-ray tomography is presented, the Radon transform $\R$ is introduced together with its adjoint $\R^\ast$ and the filtered back projection, a common way of inverting the Radon transform. 
The second part, Subsection~\ref{sec_math}, introduces the mathematical framework including convex analysis and the total variation(TV) semi-norm which are used to formulate Tikhonov regularization problems.
In Section~\ref{sec_ProbMod} a Tikhonov regularization problem is formulated with the Radon transform as the operator using a $L^2$ discrepancy and a TV regularization term resulting in the main minimization problem~\eqref{opt_unconst} the solution of which can be seen as a regularized reconstruction.
The model was already used in~\cite{sidky2012convex} and~\cite{hamalainen2014total} in particular to gain reconstructions from sparsely sampled data that is from a limited number of angles.
A constraint variant of this problem is formulated yielding a model for reducing metal artefacts. 
This idea of using inequality constraints to model beam cancellation by metal was proposed by the author in the conference proceedings paper~\cite{Schiffer}.
In Section~\ref{sec_NumSol} the involved normed spaces and operators are discretized in particular the Radon transform is discretized in a way consistent with common implementations.
This leads to the formulation of discrete problems which can then by numerically solved by using the Chambolle-Pock algorithm, this was already done in~\cite{hamalainen2014total} for the problem without constraints.
In Section~\ref{sec_Precond} faster numerical methods are sought, recently developed algorithm from~\cite{Precond2} are presented and applied to the problems.
Some options for preconditioners are presented one new, based on the fact that $\R^\ast \R$ is equal to convolution with the reciprocal of the norm.
Another which was already used in~\cite{ramani2012splitting} for a different algorithm based on the alternating direction method of multipliers(\myacro{ADMM}), in this paper application for TV regularized was mentioned but not developed, this is done in this thesis and serves as a reference.
In the results Section~\ref{sec_res} details of the implementation are given and results of numerical experiments both on synthetically phantom and real world data are presented and a conclusion is given.
Particular speedup is achieved for application to metal artefact removal.
Appendix~\ref{app_data_ext} presents some methods for processing data before the algorithms can be applied, in particular converting to parallel beam geometry.
Appendix~\ref{app_file} gives details of the file format that had to be reverse engineered for this project.


\section{Theory of the Radon transform} \label{sec_radon_trhm}
This section gives the definition of the Radon transform, which is a simple model for X-ray tomography. 
First the physical process is modelled according to~\cite{epstein}, leading to the definition of the Radon transform $R$.
Next a way of inverting $R$, the filtered back projection, is discussed which will involve the adjoint $R^\ast$.
For a classical reference see~\cite{Kak_princ_ct}.
\subsection{Modelling and definition of the Radon transform}
First the physical process of a X-ray passing through material is modelled under the following assumptions: 
\begin{enumerate}
    \item There is no refraction or diffraction so that beams travel along straight lines. 
    \item The X-rays have a single energy level in the sense that all photons have the same energy, and thus the intensity $I$ of the beam can be described with only one value.
    \item The loss of intensity a beam suffers from traveling through a material is the product of the intensity and a linear absorption coefficient $u(x)$ specific to the traversed material, this is formulated as Beer's law:
\[
\frac{\diff I}{\diff t} = - u(x) I
\]
where $t$ refers to the arc length along a straight line segment.
\end{enumerate}
Let $x_0, x_1 \in \mathbb{R}^2$ be linearly independent vectors, let $s \in \mathbb{R}$ and consider the line  $L(t) = s x_0  + t x_1$ , let the intensity along $L$ be given by
\[
  i(t) = I(s x_0  +t x_1 )
\]
the linear absorption coefficient along $L$ is
\[
  u_L(t) = u( s x_0 + t x_1 ).
\]
Then Beer's law states that
\[
    i'(t) = -u_L(t)i(t) \text{ or }  -u_L(t) = \frac{i'(t)}{i(t)}.
\]
Integrating from $t=a$ to $t=b$ gives
\begin{equation} \label{eq_rad_atun}
  - \int_a^b u_L(t) \dd t = \log(i(b)) - \log(i(a)).
\end{equation}
Therefore, the process of an X-ray passing through an object, being weakened and received by a detector is modeled as integrating the absorption coefficient $u$ over a straight line $L$.

In this model for computed tomography a radiation source, which emits parallel beams and an opposing line detector rotate around an object to be scanned in a plane.
The object to be scanned is modeled as a function $u \colon \mathbb{R}^2 \to \mathbb{R}$, that maps a point $(x,y)$ to the linear absorption coefficient $u(x,y)$ at that point, which can be thought of as a density distribution of the X-ray attenuation, referred to as radiodensity or simply density.
The function $u$ is assumed to be zero outside a region of interest $\Om$.
Consider a ray traveling along the line $L=L(s,\theta)$ where $\varphi$ is the gradient angle of $L$ and $s$ its signed distance from the origin, the \emph{offset}.
If the ray is emitted at the source with intensity $I(x_S)$ and its intensity is measured at the detector as $I(x_D)$ then by equation~\eqref{eq_rad_atun}
\begin{equation}
  \int_L u(x) \dd t = -\log(I(x_D)) + \log(I(x_S)).
\end{equation}
The value of this line integral will be set as $(\R u)(\varphi, s)$.
The operator $\R$ which maps $u$ to the function $\R u$, which is commonly interpreted as a function of angle and offset,  is called the \emph{Radon transform}, in a technical context the data $\R u$ produced is called \emph{sinogram}.
While the Radon transform can in principle be computed whenever the integrals over all such lines exist, it makes sense to define it for functions in $L^2$ since then the result will also be in $L^2$ as the following theorem shows; it also serves as a formal definition of $\R$.
\begin{satz} \label{th_radon_l2}
Let $\rho > 0, \, \Om = B_\rho(0) \subset \mathbb{R}^2, \, \Dm = (-\rho,\rho) \times ( -\frac{\pi}{2}, \frac{\pi}{2})$ and $ T = T(s) = \sqrt{ \rho^2 - s^2}$ then the operator defined for $u \in L^2(\Om)$ by
\begin{equation}
    (\R u)(s,\theta) = \int_{L(s,\theta)} u \dd s 
    = \int_{-T}^{T} u( x_0(\theta) s + x_1(\theta) t) \dd t
\end{equation}
with
\begin{equation*}
    x_0(\theta) =
    \begin{pmatrix}
        \cos(\theta) \\
        \sin(\theta)
    \end{pmatrix},
    \quad
    x_1(\theta) =
    \begin{pmatrix*}[r]
        - \sin(\theta) \\
          \cos(\theta)
    \end{pmatrix*},
\end{equation*}
defines a continuous linear operator $\R \colon L^2(\Om) \to L^2(\Dm)$ with $\norm{\R} \leq \sqrt{2 \pi \rho}$.
It is called the \emph{Radon transform}.
\end{satz}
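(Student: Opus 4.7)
The strategy is a classical Cauchy–Schwarz plus a change-of-variables argument: bound $|(\R u)(s,\theta)|^2$ pointwise by the chord length times an integral of $|u|^2$ along the chord, integrate in $s$ and $\theta$, and use rotational invariance of Lebesgue measure to recognize the resulting double integral as $\|u\|_{L^2(\Om)}^2$.

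First, for fixed $(s,\theta) \in \Dm$, apply the Cauchy–Schwarz inequality to the defining integral against the constant function $1$ on $[-T,T]$:
\begin{equation*}
|(\R u)(s,\theta)|^2 \leq 2T(s) \int_{-T(s)}^{T(s)} |u(x_0(\theta) s + x_1(\theta) t)|^2 \dd t.
\end{equation*}
Since $T(s) = \sqrt{\rho^2 - s^2} \leq \rho$, we bound the prefactor uniformly by $2\rho$. Integrating over $\Dm$ and applying Tonelli gives
\begin{equation*}
\norm[L^2(\Dm)]{\R u}^2 \leq 2\rho \int_{-\pi/2}^{\pi/2} \int_{-\rho}^{\rho} \int_{-T(s)}^{T(s)} |u(x_0(\theta) s + x_1(\theta) t)|^2 \dd t \dd s \dd \theta.
\end{equation*}
For each fixed $\theta$, the map $(s,t) \mapsto x_0(\theta) s + x_1(\theta) t$ is an orthogonal transformation of $\mathbb{R}^2$, hence has unit Jacobian, and it carries the rectangle $\{(s,t) : |s| < \rho,\ |t| < T(s)\}$ bijectively onto $B_\rho(0) = \Om$. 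Therefore the inner double integral equals $\norm[L^2(\Om)]{u}^2$ for every $\theta$, and integrating the constant over $\theta \in (-\pi/2, \pi/2)$ yields $\norm[L^2(\Dm)]{\R u}^2 \leq 2\pi\rho \norm[L^2(\Om)]{u}^2$, which is the claimed operator-norm bound. Linearity of $\R$ is immediate from linearity of the integral.

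The one genuinely nontrivial point, which is the main obstacle, is well-definedness: a priori $u \in L^2(\Om)$ is only an equivalence class, so it is not obvious that the line integrals $(\R u)(s,\theta)$ are defined for almost every $(s,\theta)$ and yield a measurable function on $\Dm$. The clean way around this is a density argument: first define $\R$ on $C_c(\Om)$, where the integrals make pointwise sense and $\R u$ is continuous on $\Dm$, hence measurable. The estimate above is then established on $C_c(\Om)$ by the calculation just sketched, showing $\R$ is bounded with norm at most $\sqrt{2\pi\rho}$. Since $C_c(\Om)$ is dense in $L^2(\Om)$, the operator extends uniquely to a bounded linear operator $\R \colon L^2(\Om) \to L^2(\Dm)$ with the same norm bound, and a standard subsequence argument shows the extension agrees with the pointwise definition almost everywhere whenever the latter makes sense.
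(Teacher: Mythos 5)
Your proposal is correct and follows essentially the same route as the paper's proof: Cauchy--Schwarz against the constant $1$ on the chord, the uniform bound $T(s)\leq\rho$, Fubini/Tonelli, the unit-Jacobian change of variables $(s,t)\mapsto x_0(\theta)s+x_1(\theta)t$ onto $\Om$, and a density argument to extend from a dense subspace (you use $C_c(\Om)$ where the paper uses $\mathcal{C}^\infty(\Om)\cap L^2(\Om)$). Your explicit discussion of measurability and well-definedness is a welcome addition but does not change the structure of the argument.
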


\begin{proof}
Let $ u \in \mathcal{C}^\infty(\Om) \cap L^2(\Om)$, then
\begin{align*}
    \norm[2]{\R u}^2
    &= \int_{\Dm} \abs{ \big(\R u\big)(s,\theta)} ^2 \, \dd (s,\theta)
     = \int_{-\rho}^\rho \int_{-\frac{\pi}{2}}^{\frac{\pi}{2}}
     	\abs{ \big(\R u\big)(s,\theta)} ^2 \dd \theta \dd  s \\
    &= \int_{-\rho}^\rho \int_{-\frac{\pi}{2}}^{\frac{\pi}{2}}
            \abs{ \int_{-T}^{T} u( x_0(\theta) s + x_1(\theta) t) \dd t } ^2 \dd \theta \dd s \\
    &\leq \int_{-\rho}^\rho
          \int_{-\frac{\pi}{2}}^{\frac{\pi}{2}}
          2\rho \int_{-T}^{T}
          	\abs{ u( x_0(\theta) s + x_1(\theta) t) } ^2 \dd t  \dd \theta \dd s 	\\
    &= 2\rho \int_{-\frac{\pi}{2}}^{\frac{\pi}{2}}
    		 \int_{-\rho}^\rho
    		 \int_{-T}^{T}
    			\abs{u( x_0(\theta) s + x_1(\theta) t) } ^2  \dd t \dd s \dd \theta 	\\
    &= 2\rho \int_{-\frac{\pi}{2}}^{\frac{\pi}{2}} \int_{\Om}
    			\abs{u(x)}^2 \dd x \dd \theta
     = 2\rho \pi \norm[2]{u}^2.
\end{align*}
Here the Cauchy-Schwarz inequality was used in the third line and the transformation $\phi(t,s) = x_0(\theta) s + x_1(\theta) t$ with $ \det(\D\phi) = \det( (x_1,x_0)) = 1$ in the forth line.
As $\mathcal{C}^\infty(\Om)\cap L^2(\Om)$ is dense in $L^2(\Om)$ the operator $\R$ can be extended uniquely to $\R \colon L^2(\Om) \to L^2(\Dm)$ and the inequality shows $\norm{\R} \leq \sqrt{2 \pi \rho}$.
\end{proof}

\subsection{Filtered Backprojection} \label{ssec_fbp}
The Radon transform is invertible, one way to show this is to use a connection to the Fourier transform.
Let $u \in L^2(\Om)$ and $v =  \R u$, define for fixed $\theta \in ( -\frac{\pi}{2}, \frac{\pi}{2})$ the function $v_\theta \colon s \mapsto v(s,\theta)$ for $\abs{s} < \rho$ extended with $v_\theta(s) = 0$ for $ \abs{s} \geq \rho$.
Then the 1-D Fourier transform $\fourld$ of $v_\theta$ satisfies
\begin{align} \label{eq_four_v_theta}
    \fourld(v_\theta)(\omega)
    &= \frac{1}{\sqrt{2\pi}} \int_{\mathbb{R}} v_{\theta}(s) e^{-\iu \omega s} \dd s  \\
    &= \frac{1}{\sqrt{2\pi}} \int_{\mathbb{R}} \int_{\mathbb{R}} \tilde u ( x_0(\theta) s + x_1(\theta) t)  \, e^{-\iu \omega s} \dd t \dd s
\end{align}
where $\tilde u(x) = u(x)$ for $x \in D$ and $\tilde u(x) = 0$ otherwise.
Setting $x =  x_0(\theta) s + x_1(\theta) t$ then $ s = x \cdot x_0(\theta)$ as $(x_0, x_1)$ is orthonormal, then equation \eqref{eq_four_v_theta} becomes
\begin{equation} \label{eq_four_slice}
    \fourld(v_\theta)\ndel{\omega}
    = \frac{1}{\sqrt{2\pi}} \int_{\mathbb{R}} u(x)  \, e^{-\iu \omega x \cdot x_0} \dd x
    = \sqrt{2\pi}\four( \tilde u)(\omega x_0 ),
\end{equation}
where $\four$ denotes the 2-D Fourier transform.
This fact is often referred to as the \emph{Fourier slice theorem} and expressed in words as: projecting a function $u$ to a line and then performing a 1-D Fourier transform is the same as taking the 2-D Fourier transform of $u$ and restricting the values to that line, up to a factor of $\sqrt{2\pi}$.

Most importantly, from \eqref{eq_four_slice} it can be seen that  $\four(u)$ can be calculated from the projection data $v$, and thus using the inverse Fourier transform, $u$ itself can be reconstructed.
For this purpose let $\four(\tilde u) = \hat u$, then with the transformation $\xi = \omega x_0(\theta)$ and $\dd \xi = \abs{\omega} \dd \theta \dd \omega$  for any $x \in D$
\begin{align}
    u(x) &= \tilde u(x)
    = \frac{1}{2\pi} \int_{\mathbb{R}^2} \hat u (\xi)  e^{\iu \xi \cdot x} \dd \xi \\
    &= \frac{1}{2\pi} \int_{-\frac{\pi}{2}}^{\frac{\pi}{2}} \int_{\mathbb{R}}
        \abs{\omega} \hat u (\omega x_0(\theta))  e^{\iu \omega x_0(\theta) \cdot  x} \dd  \theta \dd  \omega \\
    &= \frac{1}{2\pi} \frac{1}{\sqrt{2\pi}}
        \int_{-\frac{\pi}{2}}^{\frac{\pi}{2}} \int_{\mathbb{R}} \abs{\omega}
        \four(v_\theta)\ndel{\omega}  e^{\iu \omega x_0(\theta) \cdot  x} \dd  \theta \dd  \omega \\
    &= \int_{-\frac{\pi}{2}}^{\frac{\pi}{2}} w(x \cdot x_0(\theta), \theta) \dd  \theta  
        \label{eq_fbp_full}
\end{align}
with
\begin{equation} \label{eq_fbp_filter}
w(s, \theta) = \frac{1}{\sqrt{2\pi}} \int_{\mathbb{R}}  \frac{\abs{\omega}}{2 \pi} \fourld(v_\theta)\ndel{\omega}  e^{\iu \omega s} \dd  \omega =
    \fourld^{-1}\ndel{ \frac{\abs{\cdot}}{2\pi} \fourld(v_\theta)\ndel{\cdot} }(s, \theta).
\end{equation}
Thus a way to invert the Radon transform is found, which is called the \emph{filtered back-projection}, where calculating $w$ in \eqref{eq_fbp_filter} is called filtering, or applying the Ram-Lak filter and the right hand side of \eqref{eq_fbp_full} the linear back-projection of $w$.
In fact this linear back-projection is the adjoint of the Radon transform as the following lemma shows.
\begin{lemma} \label{lem_lin_back_proj}
The adjoint of the Radon transform $\R^\ast \colon L^2(\Dm) \to L^2(\Om)$ maps $w\in L^2(\Dm)$ to $\R^\ast w$  given by
\begin{equation*}
  (\R^\ast w)( x) = \int_{-\frac{\pi}{2}}^{\frac{\pi}{2}} w(x \cdot x_0(\theta), \theta) \dd  \theta.
\end{equation*}
\end{lemma}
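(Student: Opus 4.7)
The plan is to compute the inner product $\scp[L^2(\Dm)]{\R u}{w}$ for sufficiently nice $u$ and $w$, swap the order of integration via Fubini, and then change variables to reveal the claimed formula. Since both $\R$ and the proposed $\R^\ast$ are bounded linear operators on Hilbert spaces, it suffices to verify the adjoint identity on a dense subspace, \eg $u \in \mathcal{C}^\infty(\Om) \cap L^2(\Om)$ and $w \in \mathcal{C}^\infty(\Dm)\cap L^2(\Dm)$, and then extend by density and by the continuity of $\R$ already established in Theorem~\ref{th_radon_l2}.

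Concretely, first I would write
\begin{equation*}
  \scp[L^2(\Dm)]{\R u}{w}
  = \int_{-\rho}^\rho \int_{-\pi/2}^{\pi/2} \left( \int_{-T(s)}^{T(s)} u\bigl(x_0(\theta)s + x_1(\theta)t\bigr) \dd t \right) \overline{w(s,\theta)} \dd \theta \dd s.
\end{equation*}
Here $w$ depends only on $(s,\theta)$, so it may be pulled inside the $t$-integral. Next, for each fixed $\theta$ I would apply the change of variables $x = x_0(\theta)s + x_1(\theta)t$ on the $(s,t)$-integration. Because $(x_0(\theta), x_1(\theta))$ is orthonormal the Jacobian is $1$, and the image of the rectangle $\{(s,t) : \abs{s}<\rho,\ \abs{t}<T(s)\}$ under this map is exactly the disk $\Om = B_\rho(0)$. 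Moreover under this substitution $s = x\cdot x_0(\theta)$, so
\begin{equation*}
  \scp[L^2(\Dm)]{\R u}{w}
  = \int_{-\pi/2}^{\pi/2} \int_{\Om} u(x)\, \overline{w\bigl(x\cdot x_0(\theta),\theta\bigr)} \dd x \dd \theta.
\end{equation*}
Finally I would invoke Fubini once more to swap the $x$ and $\theta$ integrals, which yields
\begin{equation*}
  \scp[L^2(\Dm)]{\R u}{w}
  = \int_{\Om} u(x)\, \overline{ \int_{-\pi/2}^{\pi/2} w\bigl(x\cdot x_0(\theta),\theta\bigr) \dd \theta} \dd x
  = \scp[L^2(\Om)]{u}{\R^\ast w},
\end{equation*}
with $\R^\ast w$ defined as in the statement. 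Since this holds for all $u,w$ in the respective dense subspaces, the identity extends to all $u \in L^2(\Om)$ and $w \in L^2(\Dm)$, and uniqueness of the adjoint gives the claim.

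The only technical point demanding real care is justifying Fubini and the change of variables when $u$ and $w$ are merely $L^2$: the intermediate triple integral is \emph{a priori} not absolutely convergent. The easiest way around this is to first do the entire computation for smooth compactly supported $u,w$ (where every rearrangement is unproblematic), then use Theorem~\ref{th_radon_l2} to pass to the $L^2$ limit on both sides of the adjoint identity. A secondary, minor bookkeeping issue is ensuring that the substitution map $\phi_\theta(s,t) = x_0(\theta)s + x_1(\theta)t$ is a bijection from $\{(s,t) : \abs{s}<\rho,\ \abs{t}<T(s)\}$ onto $B_\rho(0)$; this follows from orthonormality of the frame $(x_0(\theta), x_1(\theta))$, which gives $\abs{\phi_\theta(s,t)}^2 = s^2+t^2$, so the preimage of the disk is exactly the prescribed domain.
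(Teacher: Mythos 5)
Your proposal is correct and follows essentially the same route as the paper's proof: compute $\scp[L^2(\Dm)]{\R u}{w}$ for smooth $u,w$ in dense subspaces, apply the orthonormal change of variables $x = x_0(\theta)s + x_1(\theta)t$ (unit Jacobian) to identify the result as $\scp[L^2(\Om)]{u}{\R^\ast w}$, and extend by density. Your additional remarks on Fubini and the bijectivity of the substitution are sound refinements of the same argument rather than a different approach.
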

\begin{proof}
Let $u \in L^2(\Om) \cap \mathcal{C}^\infty(\Om)$ and $w \in L^2(\Dm)\cap \mathcal{C}^\infty(\Dm)$ then
\begin{align*}
    \scp[L^2(\Dm)]{\R u}{w}
    &= \int_{-\frac{\pi}{2}}^{\frac{\pi}{2}} \int_{-\rho}^{\rho}
         	 w(s, \theta) \int_{-T}^{T} u( x_0(\theta) s + x_1(\theta) t) \dd t \dd s \dd \theta \\
    &= \int_{-\frac{\pi}{2}}^{\frac{\pi}{2}} \int_{\mathbb{R}^2}
        u(x) w(x \cdot x_0(\theta), \theta) \dd x \dd \theta
    = \scp[L^2(\Om)]{u}{\R^\ast w}.
\end{align*}
Since $L^2(\Om) \cap \mathcal{C}^\infty(\Om)$ is dense in $L^2(\Om)$ the same holds for $u,w \in L^2(\Om)$.
\end{proof}
The filtered back-projection is used is practice to numerically reconstruct the image, it has the advantage of being very fast but the disadvantage of amplifying any noise that might be present in the projection data.
One remedy for this is using a different filter \eg instead of multiplying with $\abs{\omega}$ in \eqref{eq_fbp_filter} one can multiply by $\abs{\omega} \sinc(\omega \frac{\pi}{2})$ thus suppressing high frequency modes, which should mostly correspond to noise, this is called the Shepp-Logan filter.
Nonetheless, iterative reconstruction algorithms have recently gained more and more interest as the provide increased insensitivity to noise.
Additionally, these methods are more flexible and can be used when only a small number of projections are available -- \eg in an effort to save on radiation dose -- or when additional assumptions are put into the model \eg to reduce artifacts.

Performing the Radon transform followed by back projection gives the same result as convolution with the reciprocal of the norm, as formulated in the following lemma.
\begin{lemma} \label{lem_RR_uk}
Let $u \in L^2(\Om)$ then for $z \in \Om$ it holds that
\begin{equation*}
(\R^\ast \R u)(z)  = \int_{\Om} u(\tilde{z}) \frac{1}{\norm{ z-\tilde z}} \dd \tilde{z}.
\end{equation*}
\end{lemma}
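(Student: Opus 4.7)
The plan is to substitute the definitions of $\R$ and $\R^\ast$ from Theorem~\ref{th_radon_l2} and Lemma~\ref{lem_lin_back_proj}, shift the line-integration parameter so that $t=0$ corresponds to $z$ itself, and then interpret the resulting double integral over $(\theta,t)$ as an integral over all lines through $z$, which after a change of variables becomes a convolution with $1/\|\cdot\|$. By density (and the local integrability of $1/\|\cdot\|$ on $\Om\subset\mathbb{R}^2$), it suffices to verify the identity for $u\in\mathcal{C}^\infty(\Om)\cap L^2(\Om)$ extended by zero.

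First, directly combining the two definitions gives
\begin{equation*}
(\R^\ast \R u)(z) = \int_{-\pi/2}^{\pi/2} \int_{-T}^{T} u\bigl(x_0(\theta)(z\cdot x_0(\theta)) + x_1(\theta)t\bigr)\dd t\dd \theta,
\end{equation*}
and since $u$ vanishes outside $\Om$ one may replace the inner limits by $\mathbb{R}$. Using that $\{x_0(\theta),x_1(\theta)\}$ is orthonormal, $z = x_0(\theta)(z\cdot x_0(\theta)) + x_1(\theta)(z\cdot x_1(\theta))$, so after substituting $t'=t-z\cdot x_1(\theta)$ the integrand becomes $u(z + x_1(\theta) t')$. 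Thus the back-projection is the integral of $u$ over all lines through $z$, parameterized as $\{z + x_1(\theta)t' : \theta\in(-\pi/2,\pi/2), t'\in\mathbb{R}\}$.

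The main step is the change of variables $(\theta,t')\mapsto \tilde z := z + x_1(\theta)t'$. A short computation of the Jacobian of $(\theta,t')\mapsto x_1(\theta)t' = (-\sin\theta\cdot t', \cos\theta\cdot t')$ gives determinant $-t'$, hence $\dd\tilde z = |t'|\dd t'\dd \theta = \|\tilde z - z\|\dd t'\dd \theta$. Because $\cos\theta>0$ on $(-\pi/2,\pi/2)$, the sign of the second component of $x_1(\theta)t'$ equals the sign of $t'$, which lets one check that this map is a bijection (up to a null set) from $(-\pi/2,\pi/2)\times\mathbb{R}$ onto $\mathbb{R}^2\setminus\{z + \mathbb{R} x_0(0)\}$, and therefore in particular a.e.\ onto $\Om$. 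Pushing forward the integral then yields
\begin{equation*}
(\R^\ast \R u)(z) = \int_{\mathbb{R}^2} u(\tilde z) \frac{1}{\|z-\tilde z\|}\dd \tilde z = \int_{\Om} u(\tilde z)\frac{1}{\|z-\tilde z\|}\dd \tilde z,
\end{equation*}
which is the claim.

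The only real obstacle is justifying the change of variables cleanly: one must verify the bijectivity up to a null set and confirm integrability of $\tilde z\mapsto u(\tilde z)/\|z-\tilde z\|$ on $\Om$, which in $\mathbb{R}^2$ holds since $1/r$ is integrable against the planar area element $r\dd r\dd\phi$ near the origin. The extension to general $u\in L^2(\Om)$ then follows by continuity of both sides (the right-hand side being a Young-type convolution estimate on the bounded domain $\Om$) and density of $\mathcal{C}^\infty(\Om)\cap L^2(\Om)$ in $L^2(\Om)$.
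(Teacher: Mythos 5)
Your proposal is correct and follows essentially the same route as the paper: insert the definitions of $\R$ and $\R^\ast$, then perform the change of variables $(\theta,t)\mapsto\tilde z$ whose Jacobian has absolute value $\norm{z-\tilde z}$, yielding the convolution with $1/\norm{\cdot}$. The only differences are cosmetic improvements — shifting the line parameter so that $t'=0$ sits at $z$ (which makes the Jacobian simply $-t'$ rather than $z\cdot x_1(\theta)-t$) and spelling out the null-set bookkeeping and the density/continuity argument for general $u\in L^2(\Om)$, which the paper leaves implicit.
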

\begin{proof}
Let $u \in L^2(\Om) \cap \mathcal{C}^\infty(\Om)$ for fixed $z\in \Om$ calculate
\begin{align*}
(\R^\ast \R u)(z)
    &= \int_{-\frac{\pi}{2}}^{\frac{\pi}{2}} (Ru)(z \cdot x_0(\theta),\theta) \dd  \theta
    = \int_{-\frac{\pi}{2}}^{\frac{\pi}{2}} \int_{-T}^{T}
    		u(z \cdot x_0(\theta) x_0(\theta) + t x_1(\theta))  \dd t \dd  \theta.
\end{align*}
Use the transformation $\tilde z = z \cdot x_0(\theta) x_0(\theta) + t x_1(\theta)$.
Specifically let
\begin{equation}
M_z =
\Bigset{ (t,\theta)}{
\theta \in (-\frac{\pi}{2}, \frac{\pi}{2} ],
0<\abs{t} < \sqrt{ \rho^2 - (z \cdot x_0(\theta))^2}
}
\end{equation}
then $ \varphi \colon M_z \to \Om \setminus \sett{z}, (t,\theta) \mapsto \tilde z = (\tilde x, \tilde y)$ with
\begin{align*}
\tilde x &= z \cdot x_0(\theta) \cos(\theta) - t \sin(\theta) \\
\tilde y &= z \cdot x_0(\theta) \sin(\theta) + t \cos(\theta)
\end{align*}
is a coordinate transform.
Firstly $\varphi$ does indeed map to $\Om \setminus \sett{z}$ as $\tilde x^2 + \tilde y^2 =  (z \cdot x_0(\theta))^2 + t^2 < \rho^2$.
In order to show that $\varphi$ is surjective let $\tilde z \in D \setminus \sett{z}$ be arbitrary then for any $\theta$ the vectors $x_0(\theta),x_1(\theta)$ are a orthonormal basis of $\mathbb{R}^2$ and so $\tilde z = \tilde z \cdot x_0(\theta) x_0(\theta) + \tilde z \cdot x_1(\theta) x_1(\theta)$.
Choosing the unique $\theta \in (-\frac{\pi}{2}, \frac{\pi}{2} ]$ such that $(\tilde z - z) \cdot x_0(\theta) = 0$ as well as $t = \tilde z \cdot x_1(\theta)$ gives $\varphi((t,\theta)) = \tilde z$ with $(t,\theta) \in M_z$ since $t^2 = \abs{ \tilde z - \tilde z \cdot x_0(\theta) x_0(\theta)}^2 = \abs{\tilde z}^2 - 2 (z \cdot x_0(\theta))^2 +  (z \cdot x_0(\theta))^2 < \rho^2 - z \cdot x_0(\theta)$.
Injectivity is clear as multiplying $\tilde z = z \cdot x_0(\theta) x_0(\theta) + t x_1(\theta)$ by $x_0(\theta)$ gives $ z \cdot x_0(\theta) = \tilde z \cdot x_0(\theta)$ which is true for a unique $\theta \in (-\frac{\pi}{2}, \frac{\pi}{2} ]$, multiplying by $x_1(\theta)$ gives necessarily $t = \tilde z \cdot x_1(\theta)$.

Clearly $\varphi$ is differentiable with the Jacobian
\begin{equation}
J = \begin{pmatrix}
z \cdot x'_0(\theta) \cos(\theta) - z \cdot x_0(\theta) \sin(\theta) - t \cos(\theta) &-\sin(\theta) \\
z \cdot x'_0(\theta) \sin(\theta) + z \cdot x_0(\theta) \cos(\theta) - t \sin(\theta) & \cos(\theta) \\
\end{pmatrix}
\end{equation}
which has the determinant
\begin{equation}
\det(J)=z \cdot x'_0(\theta) - t = z \cdot x_1(\theta) - t.
\end{equation}
Since $(\tilde z - z)\cdot x_0(\theta) = z \cdot x_0(\theta) -z \cdot x_0(\theta) =0 $ it holds that $ \norm{ z - \tilde z } = \abs{ (z - \tilde z)\cdot x_1(\theta)} = \abs{ z \cdot x_1(\theta) - t}$ and therefore $\abs{\det(J)} = \norm{z- \tilde{z}}$.
Performing this coordinate transform yields
\begin{equation}
(\R^\ast \R u)(z)
    =  \int_{\Om} u(\tilde{z}) \frac{1}{\norm{ z-\tilde z}} \dd \tilde{z}
    = (u \ast \frac{1}{\norm{\cdot}} )(z).
\end{equation}
\end{proof}

\section{Mathematical Framework} \label{sec_math}
This section presents the mathematical framework for the methods used in this thesis, it is mostly a summary of the content of~\cite{KB} which is needed here.  
The overall goal will be the solution of optimization problems which are of the form $\min_{u \in X} \Phi( u ) + \lambda \Psi(u)$, where $\Phi$ is the data fidelity or discrepancy term measuring how close $u$ is to given data $\unull$ and $\Psi$ is the penalty or regularization term which favors desired properties of the solution.
The regularization parameter $\lambda> 0 $ balances these two  objectives.
Here $\Phi$ and $\Psi$ are assumed to be convex but possibly not smooth.
To give the context of the methods a brief introduction to convex analysis is given, this also covers the Fenchel conjugate, Rockafellar duality and saddle point problems.
Then a specific regularization term - the total variation semi-norm is presented and its role in image processing as a tool for edge preserving noise removal is discussed.
Finally the previous two sections are combined in a discussion of the Tikhonov regularization where the data fidelity term will not compare $u$ directly to $\unull$, as would be the case \eg with $\Phi(u) = \norm{u-\unull}$, but rather after applying an operator $A$ as in  $\Phi(u) = \norm{Au - \unull}$ since this will be the situation in Section \ref{sec_ProbMod}.
\subsection{Convex Analysis}
\begin{mydef}
Call the set $\extR = \mathbb{R} \cup \{\infty\}$ the \emph{extended reals}.
Let $X$ be a Banach space and $f \colon X \to \extR$ then
\begin{equation}
\epi(f) = \set{ (x,t) \in X \times \mathbb{R} }{ f(x) \leq t}
\end{equation}
is called the \emph{epigraph} of $f$.
Note that $t= \infty$ is not allowed and thus $f \equiv \infty$ if and only if $\epi(f) = \emptyset$, if this is not the case $f$ is called \emph{proper}.
The function $f$ is called \emph{convex} if its epigraph is a convex set, and it is called \emph{(weakly) lower semi-continuous} if its epigraph is (weakly) closed.
This definition of convexity is equivalent to $f(\lambda x +(1-\lambda)y) \leq \lambda f(x) + (1-\lambda)f(y)$ for all $x,y \in X$ and $\lambda \in (0,1)$, additionally if the inequality is strict whenever $x \neq y$ then $f$ is called \emph{strictly convex}.
\end{mydef}

\begin{lemma}
A function $f$ is (weakly) \lsc{} if and only if for every sequence $(u_n)$ (weakly) converging to $u$ in $X$ it holds that $f(u) \leq \liminf_{n \to \infty} f(u_n)$.
\end{lemma}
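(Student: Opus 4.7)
The plan is to prove each direction of the equivalence separately, working directly with the definition $\epi(f) = \set{(x,t) \in X \times \mathbb{R}}{f(x)\leq t}$ given above. Throughout, I treat the two cases in parallel by reading ``(weakly) closed'' as sequentially closed in the relevant topology; the product space $X\times\mathbb{R}$ carries the product of (weak) convergence on $X$ and ordinary convergence on $\mathbb{R}$.

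For the direction that (weak) closedness of the epigraph implies the liminf inequality: fix a sequence $u_n \to u$ in the relevant sense and set $L = \liminf_{n\to\infty} f(u_n) \in \extR$. If $L = \infty$ there is nothing to show, so assume $L < \infty$. Pick a subsequence $(u_{n_k})$ with $f(u_{n_k}) \to L$. For any $\varepsilon > 0$, eventually $f(u_{n_k}) \leq L + \varepsilon$, so the pair $(u_{n_k}, L+\varepsilon)$ lies in $\epi(f)$. Since $u_{n_k} \to u$ and the second coordinate is constant, this pair converges to $(u, L+\varepsilon)$ in the product topology, and by (weak) closedness of $\epi(f)$ the limit is also in $\epi(f)$, giving $f(u) \leq L + \varepsilon$. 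Letting $\varepsilon \to 0$ yields $f(u) \leq L$.

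For the converse, assume the liminf characterization and take a sequence $(u_n, t_n) \in \epi(f)$ with $(u_n, t_n) \to (u, t)$ in the product topology, i.e.\ $u_n \to u$ (weakly) and $t_n \to t$ in $\mathbb{R}$. From $f(u_n) \leq t_n$ and the hypothesis,
\begin{equation*}
f(u) \leq \liminf_{n\to\infty} f(u_n) \leq \liminf_{n\to\infty} t_n = t,
\end{equation*}
so $(u,t) \in \epi(f)$. This shows $\epi(f)$ is (sequentially, weakly) closed.

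The only subtlety worth flagging is the weak case: strictly speaking, ``weakly closed'' and ``weakly sequentially closed'' can differ for general sets, but the statement of the lemma uses a sequential criterion, so the proof uses the sequential version of weak closedness throughout. The argument is otherwise routine, and the one place to be careful is the step where $L = \liminf f(u_n)$ might be $-\infty$; in that case one chooses the subsequence so that $f(u_{n_k}) \leq L + \varepsilon$ can be replaced by $f(u_{n_k}) \leq M$ for any chosen $M \in \mathbb{R}$, obtaining $f(u) \leq M$ for all $M$, hence $f(u) = -\infty \leq L$ as required.
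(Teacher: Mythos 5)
Your proof is correct and follows essentially the same route as the paper: extract a subsequence realizing $\liminf_{n\to\infty} f(u_n)$ and use (weak) closedness of $\epi(f)$ for one direction, and apply the liminf inequality to a convergent sequence $(u_n,t_n)\in\epi(f)$ for the other. The only difference is that you treat the degenerate cases more carefully (working with $(u_{n_k},L+\varepsilon)$ instead of $(u_{n_k},f(u_{n_k}))$, and handling $L=\pm\infty$ explicitly), which the paper glosses over.
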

\begin{proof}
Let $\epi(f)$ be closed and let $u_n \to u$.
Let $\bar f = \liminf_{n \to \infty} f(u_n)$ and $(u_{n_k})$ be a subsequence of $(u_n)$ such that $f(u_{n_k}) \to \bar f$.
Since $(u_n, f(u_n)) \in \epi(f)$ and $(u_{n_k}, f(u_{n_k})) \to (u, \bar f)$ it follows that $(u, \bar f) \in \epi(f)$, since $\epi(f)$ is closed by assumption.
Thus $f(u) \leq \bar f$.

For the implication from left to right, let $(u_n,t_n) \in \epi(f)$ with $( u_n, t_n) \to (u,t)$ it needs to be shown that $(u,t) \in \epi(f)$.
Using the assumption and $f(u_n) \leq t_n$ it follows that $f(u) \leq \liminf_{n \to \infty} f(u_n) \leq \liminf_{n \to \infty} t_n = t$ and thus $(u,t) \in \epi(f)$.
The proof for weak convergence is completely analogous.
\end{proof}

A convex function $f$ is weakly \lsc{} if and only if it is \lsc{} since its epigraph is a convex set, which is closed if and only if it is weakly closed.

The following standard argument guarantees the existence of a minimizer of the optimization problem $\min_{u \in X} F(u)$.
\begin{lemma}[Direct method] \label{lem_direct_method}
Let $X$ be a reflexive Banach space and $f \colon X \to \mathbb{R}_\infty$ be convex, proper, weakly \lsc{} and bounded from below.
Furthermore let $f$ be radially unbounded, \ie $\norm{u_n} \to \infty$ implies $f(u_n) \to \infty$.
Then a solution $u^\ast$ of $\min_{u \in X} f(u)$ exists.
\end{lemma}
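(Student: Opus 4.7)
The plan is to execute the standard direct method of the calculus of variations, whose ingredients are already laid out in the hypotheses (reflexivity, weak lower semi-continuity, radial unboundedness, properness and boundedness from below).

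First I would set $m = \inf_{u \in X} f(u)$. Since $f$ is proper, $m < \infty$, and since $f$ is bounded from below, $m > -\infty$, so $m \in \mathbb{R}$. By definition of infimum there exists a minimizing sequence $(u_n) \subset X$ with $f(u_n) \to m$.

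Next I would argue that $(u_n)$ is bounded. Suppose for contradiction that $\norm{u_n} \to \infty$ along some subsequence (which I can extract since an unbounded sequence admits such a subsequence). Then radial unboundedness gives $f(u_{n_k}) \to \infty$, contradicting $f(u_{n_k}) \to m \in \mathbb{R}$. Hence $(u_n)$ is bounded.

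Now I would invoke reflexivity of $X$: bounded sequences in reflexive Banach spaces admit weakly convergent subsequences. So there exist $u^\ast \in X$ and a subsequence (not relabeled) with $u_n \rightharpoonup u^\ast$. Applying weak lower semi-continuity through the preceding lemma, which characterizes weak lsc by $f(u^\ast) \leq \liminf_{n\to\infty} f(u_n)$, I obtain
\begin{equation*}
f(u^\ast) \leq \liminf_{n \to \infty} f(u_n) = m.
\end{equation*}
Combined with the trivial inequality $f(u^\ast) \geq m$, this gives $f(u^\ast) = m$, so $u^\ast$ is a minimizer.

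The only step where I expect a subtlety is the boundedness argument; everything else is a direct application of a named hypothesis. Note that the assumption of convexity of $f$ is not actually needed for this proof, only weak lower semi-continuity, properness, lower-boundedness and coercivity; convexity would enter if one additionally wanted uniqueness (via strict convexity), but the statement only asserts existence, so I will not use it.
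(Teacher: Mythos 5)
Your proof is correct and follows essentially the same route as the paper's: extract a minimizing sequence, use radial unboundedness to get boundedness, apply reflexivity (Eberlein--\v{S}mulian in the paper) for a weakly convergent subsequence, and conclude via weak lower semi-continuity. Your side remark that convexity is not actually used is accurate, since weak lower semi-continuity is assumed directly rather than derived from convexity plus strong lower semi-continuity.
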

\begin{proof}
Let $ m = \inf_{u \in X } f(u)$ then $m > - \infty$ as $f$ is bounded from below.
Choose a sequence $ (u_n) $ with $f( u_n) \to m$.
This sequence must be bounded as otherwise radially unboundedness of $f$ would imply $f(u_n) \to \infty$, which is not the case as $(u_n)$ is a minimizing sequence.
Since $X$ is a reflexive Banach space the Eberlein-\v{S}mulian theorem guarantees that this bounded sequence must have a weakly convergent subsequence ${ u_{n_k} } \rightharpoonup u^\ast$.
Since $f$ is weakly \lsc{} it holds that $f(u^\ast) \leq \liminf_{k \to \infty} f( u_{n_k}) = m$ and therefore $f(u^\ast) = m = \min_{u \in X} f(u)$.
\end{proof}

\subsection{Fenchel Duality}
\begin{mydef}
Let $X$ be a normed vector space and $X^\ast$ its dual and let ${\scp{\cdot}{\cdot}}_{X,  X^\ast}=\scp{\cdot}{\cdot}$ denote the duality pairing.
Let $f \colon X \to \extR$ be proper, then the function $f^\ast \colon X^\ast \to \extR$ defined by
\begin{equation*}
f^\ast( \xi ) = \sup_{x \in X} \, \scp{\xi}{x} - f(x)
\end{equation*}
is called the \emph{convex conjugate} of $f$.
Note that the assumption of $f$ being proper ensures that $f^\ast$ is well defined, since then for fixed $\xi$ the expression $\scp{\xi}{x} - f(x)$ cannot be $-\infty$ for all $x$, it can however be unbounded from above leading to $f^\ast(\xi) = \infty$.
\end{mydef}

\begin{bsp}
Calculate the conjugate of the function $f\colon X \to \mathbb{R}, x \mapsto f(x) = \lambda\norm{x}$ for $ \lambda > 0 $, for $\xi \in X^\ast$:
\begin{align*}
f^\ast( \xi ) &= \sup_{x \in X} \, \scp{\xi}{x} - \lambda \norm{x}
= \sup_{\mu > 0} \sup_{ \norm{x} = 1 }      \, \scp{\xi}{\mu x} - \lambda \mu \\
&=\sup_{\mu > 0} \mu (\sup_{ \norm{x} = 1 }  \, \scp{\xi}{x}     - \lambda)
=\sup_{\mu > 0} \mu   (\norm[X^\ast]{\xi} - \lambda).
\end{align*}
Thus  $f^\ast(\xi) = \infty$ for $\norm[X^\ast]{\xi}  > \lambda$ and $f^\ast(\xi) = 0$ for $\norm[X^\ast]{\xi} \leq \lambda$.
This can be compactly written as $f^\ast= \indicator_{ B_\lambda^\ast}$, where $\indicator_M$ denotes the indicator function of a set $M$ defined as
\begin{equation*}
\indicator_M(x)
=  \begin{cases}
    0      &\text{if } x \in M  \\
    \infty &\text{if } x \notin M. \\
    \end{cases}
\end{equation*}
\end{bsp}
\begin{mydef}
Let $f \colon X \to \extR$ and $x \in X$ then the set
\begin{equation*}
\partial f (x) = \set{\xi \in X^\ast}{f(x) + \scp{\xi}{y-x} \leq f(y), \forall y \in X}.
\end{equation*}
is called the \emph{subdifferential} of $f$ at $x$.
\end{mydef}
The subdifferential is indeed a generalization of the classical differential, since it can be shown that if  $f$ is differentiable at a point $x$ with the derivative $f'(x)$ then $\partial f(x) = \{ f'(x) \}$.
The fact that for a convex functions $f$  which is differentiable the tangent at a point $x$ lies below the graph of $f$, \ie  $ f(x) + \scp{f'(x)}{y- x} \leq f(y) $ for all $y$, is generalized by the subdifferential which is the collection of all slopes for which this is the case.
Furthermore the notion of critical points, that is $f'(x) = 0$ as a necessary condition for $x$ being a local minimum, is generalized by the subdifferential in the following way.
It holds that $ u^\ast \in \argmin_{u\in X} f(u)$ if and only if $0 \in \partial f(u^\ast)$, this follows immediately from the definition and is true also for non-convex functions.

The following lemma, known as Fenchel's equality, provides a useful connection between the subdifferential of a function and its conjugate.
\begin{lemma} \label{lem_Fenchel_eq}
Let $f \colon X \to \extR$ be proper, convex and \lsc{} then for $x \in X$, $\xi \in X^\ast$:
\begin{equation}
\xi \in \partial f(x) \Leftrightarrow f(x) + f^\ast(\xi) = \scp{\xi}{x}
\Leftrightarrow x \in \partial f^\ast(\xi).
\end{equation}
\end{lemma}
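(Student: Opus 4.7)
The plan is to reduce the three-way equivalence to two independent equivalences and to exploit symmetry via the biconjugate. The first equivalence $\xi \in \partial f(x) \Leftrightarrow f(x)+f^\ast(\xi)=\scp{\xi}{x}$ is a direct manipulation of the definitions and does not require convexity or lower semi-continuity; the second equivalence requires the Fenchel--Moreau theorem $f^{\ast\ast}=f$ and is where the hypotheses on $f$ come in.

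First I would unfold the definition of $\partial f(x)$: the inequality $f(x)+\scp{\xi}{y-x}\le f(y)$ holds for all $y\in X$ iff $\scp{\xi}{y}-f(y)\le \scp{\xi}{x}-f(x)$ for all $y$. Taking the supremum over $y$ yields $f^\ast(\xi)\le \scp{\xi}{x}-f(x)$. Conversely, the Fenchel--Young inequality $f(x)+f^\ast(\xi)\ge \scp{\xi}{x}$ is immediate from the definition of $f^\ast$ as a supremum. Combining the two inequalities, equality $f(x)+f^\ast(\xi)=\scp{\xi}{x}$ is equivalent to $\xi\in\partial f(x)$. Notice that the equality statement is perfectly symmetric in the roles of $f$ at $x$ and $f^\ast$ at $\xi$ (since $\scp{\xi}{x}=\scp{x}{\xi}$ under the canonical embedding $X\hookrightarrow X^{\ast\ast}$).

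This symmetry motivates the second step. Applying the first equivalence now to the proper, convex, lsc function $f^\ast\colon X^\ast\to\extR$ at the point $\xi$, I obtain
\begin{equation*}
x\in\partial f^\ast(\xi)\iff f^\ast(\xi)+f^{\ast\ast}(x)=\scp{\xi}{x}.
\end{equation*}
Here I tacitly identify $x\in X$ with its image in $X^{\ast\ast}$; one should check that $\partial f^\ast(\xi)$, which a priori is a subset of $X^{\ast\ast}$, actually picks out elements of $X$, but this is automatic once the right-hand equality is established. The crucial ingredient is then $f^{\ast\ast}=f$, which substituted into the previous line yields precisely $f^\ast(\xi)+f(x)=\scp{\xi}{x}$, closing the cycle.

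The main obstacle is justifying $f^{\ast\ast}=f$. This is the Fenchel--Moreau theorem and its proof relies on the Hahn--Banach separation theorem: since $\epi(f)$ is a nonempty closed convex set (by properness, convexity and lower semi-continuity of $f$), any point strictly below the graph of $f$ can be strictly separated from $\epi(f)$ by a continuous affine functional, and the collection of all such affine minorants reconstructs $f$ as $f^{\ast\ast}$. I would cite this result from~\cite{KB} rather than reproving it, since it is a standard tool. Once $f^{\ast\ast}=f$ is in hand, the proof consists only of the two short calculations above; the symmetry argument then delivers the full chain of equivalences in a single stroke.
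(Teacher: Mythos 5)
The paper itself gives no proof of this lemma: it is stated as a known result, imported from the reference~\cite{KB} whose material this section summarizes, so there is no in-paper argument to compare against. Your proof is correct and is the standard one: the first equivalence follows from unfolding the subdifferential definition together with the Fenchel--Young inequality (and indeed needs no convexity or lower semi-continuity), while the second is obtained by applying the same computation to $f^\ast$ and invoking the biconjugation theorem $f^{\ast\ast}=f$, which is exactly where properness, convexity and \lsc{} enter. Two small points you gloss over but which are easily repaired: (i) to apply the first equivalence to $f^\ast$ you implicitly use that $f^\ast$ is proper, which is not automatic from the definition but follows from the existence of a continuous affine minorant of $f$ (the same Hahn--Banach separation ingredient underlying Fenchel--Moreau); (ii) the full strength of $f^{\ast\ast}=f$ is only needed for the implication $x \in \partial f^\ast(\xi) \Rightarrow f(x)+f^\ast(\xi)=\scp{\xi}{x}$, since the converse implication follows from Fenchel--Young applied to $f^\ast$ alone, so the hypotheses on $f$ are used in exactly one direction of the chain. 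Your handling of the $X^{\ast\ast}$ subtlety is acceptable, and in the setting the paper actually uses the lemma (Hilbert space, $X^\ast$ identified with $X$) it disappears entirely.
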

From now on let $X$ be a Hilbert space and identify $X^\ast = X$ via the Riesz representation.
In particular $\scp{\cdot }{\cdot }$ will now denote both duality pairing and the scalar product on $X$.
The subdifferential can be thought of as a set valued operator denote by $2^X$ the set of all subsets of $X$ then $\partial f \colon X \to 2^X, x \mapsto \partial f(x)$, sometimes also written as $\partial f \colon X \rightrightarrows X$.
Such a set valued operator $M \colon X \to 2^X$ is called \emph{singleton} if it maps every $x \in X$ to a singleton set, the set valued inverse is defined by $M^{-1} \colon X \to 2^X, u \mapsto \{x \in X | u \in Mx  \}$.
Solving the optimization problem $\min_{u \in X} f(u)$ is equivalent to solving the operator equation $0 \in \partial f(x)$, or equivalently $x \in x + \sigma \partial f(x)$ for any $\sigma > 0$ which means that $x$ is a fixed point of the operator $\id + \sigma \partial f$.
It can be shown that under if $f$ is proper, convex and \lsc{} then the operator $\id + \sigma \partial f$ has a singleton inverse.
\begin{mydef}
Let $X$ be a Hilbert space and $f \colon X \to \extR$ be proper, convex and \lsc{} and $\sigma >0$  then the mapping $(\id + \sigma \partial f)^{-1}$ which maps $\unull$ to the unique solution of
\[
\min_{u \in X} \frac{1}{2}\norm[X]{u - \unull}^2 + \sigma f(u)
\]
is called the \emph{resolvent} of $\partial f$ to parameter $\sigma$.
\end{mydef}
Let $x,y \in X$ and  $\xi \in \partial f(x)$, $ \eta \in \partial f(y)$ then
\begin{align}
  f(x) + \scp{\xi}{y-x}  \leq f(y) \\
  f(y) + \scp{\eta}{x-y} \leq f(x)
\end{align}
adding both together gives
\begin{equation}
  \scp{ \xi - \eta}{x - y} \geq 0
\end{equation}
in general a operator $M$ with this property is called \emph{monotone}.
If in addition to that $ M + \id $ is surjective the operator is called \emph{maximal monotone}.
Generally in trying to solve $0 \in M(u)$ for a maximal monotone operator $M$ one can show that for given $z$ and $\sigma >0$ there exists a unique $u$ such that $z \in (\id + \sigma M) u$.
This means that the operator $P = (\id + \sigma M)^{-1}$ is singleton, $P$ is called the proximal mapping of $\sigma M$.
The fact that a  solution of $0 \in M(u)$ is a fixed point of $P$ leads to the algorithm of starting with a value $z^0 \in X$ and performing the iteration $z^{k+1} = P( z^{k})$, it is called the \emph{proximal point algorithm}, which under certain conditions generates a sequence weakly converging to a solution $z^\ast$ of $0 \in M(z^\ast)$, if such a solution exists.
\begin{lemma} \label{lem_moreau_id}
Let $M$ be a maximal monotone operator then
\begin{equation*}
(\id + M)^{-1} + (\id + M^{-1})^{-1} = \id
\end{equation*}
\end{lemma}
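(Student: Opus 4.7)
The plan is to unpack both resolvents pointwise and use the definitional equivalence $y \in Mx \Leftrightarrow x \in M^{-1}y$. Fix an arbitrary $z \in X$ and set $x = (\id + M)^{-1}(z)$; this is well defined as a single point because $M$ is maximal monotone. By definition, $z \in x + Mx$, so $z - x \in Mx$. Inverting the relation gives $x \in M^{-1}(z - x)$, and adding $(z-x)$ to both sides yields $z \in (z-x) + M^{-1}(z-x)$, i.e. $z \in (\id + M^{-1})(z-x)$. Consequently $z - x$ is a preimage of $z$ under $\id + M^{-1}$, so $z - x = (\id + M^{-1})^{-1}(z)$, and adding the two gives $(\id + M)^{-1}(z) + (\id + M^{-1})^{-1}(z) = x + (z - x) = z$.

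The only delicate point is that the second resolvent $(\id + M^{-1})^{-1}$ must itself be single-valued so that the identity $z - x = (\id + M^{-1})^{-1}(z)$ is unambiguous. This reduces to checking that $M^{-1}$ is maximal monotone. Monotonicity is immediate from the symmetry of the defining inequality $\langle \xi - \eta, x - y\rangle \geq 0$ under swapping points and images. For maximality, one has to verify that $\id + M^{-1}$ is surjective; given $w \in X$, the equation $w \in y + M^{-1}y$ is equivalent, via $u = w - y$, to $w \in u + Mu$, which has a solution precisely because $M$ is maximal monotone. I expect this verification of the maximality of $M^{-1}$ to be the main (but still short) obstacle; once it is in place, the chain of equivalences above makes the identity essentially a one-line computation, and no appeal to Fenchel conjugacy or to the variational characterization of the resolvent is required.
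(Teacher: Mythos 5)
Your proof is correct: the pointwise unpacking $z-x\in Mx \Leftrightarrow x\in M^{-1}(z-x)$ gives $z\in(\id+M^{-1})(z-x)$, and your verification that $M^{-1}$ is monotone (for uniqueness of the preimage) and that $\id+M^{-1}$ is surjective via the substitution $u=w-y$ (matching the paper's definition of maximal monotonicity as surjectivity of $\id+M$) closes the only gap. The paper itself states this lemma without proof, so there is nothing to compare against; your argument is the standard direct one and fills that omission completely.
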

Lemma \ref{lem_Fenchel_eq} states that for $M = \partial f$ the inverse is $(\partial f)^{-1} = \partial f^\ast$ which combined with the previous lemma  gives Moreau's identity
\[
(\id + \partial f)^{-1} + (\id + \partial f^\ast)^{-1} = \id.
\]

Let $X$ and $Y$ be Hilbert spaces, consider the problem
\begin{equation} \label{opt_abst_primal}
\min_{u\in X} F(u) + G(Ku),
\end{equation}
with  $F \colon X \to \extR$ and $G \colon Y \to \extR$ both proper, convex and \lsc{} functions and $K \colon X \to Y$ a linear and continuous operator.
If the primal problem \eqref{opt_abst_primal} has a solution $u^\ast$ and there exists a $\unull \in X$ such that $F(\unull) < \infty$ and $G(K\unull) < \infty$ and $G$ is continuous at the point $K\unull$ then
\begin{equation} \label{eq_prim_eq_dual}
\min_{u\in X} F(u) + G(Ku) = \max_{w \in Y^\ast} -F^\ast(-K^\ast w) - G^\ast(w).
\end{equation}
Where
\begin{equation} \label{opt_abst_dual}
\max_{w \in Y^\ast} -F^\ast(-K^\ast w) - G^\ast(w).
\end{equation}
is called the dual problem, thus \eqref{eq_prim_eq_dual} means the optimal values of the primal and dual problem coincide.
On the other hand let \eqref{eq_prim_eq_dual} hold, then $u^\ast \in X$ and  $w^\ast \in Y^\ast$ are solutions of the primal and dual problem respectively if and only if
\begin{equation*}
-K^\ast w^\ast \in \partial F(u^\ast), \quad w^\ast \in \partial G(K u^\ast).
\end{equation*}

Solving the primal and dual equation simultaneously can be interpreted as finding saddle-points of the function $L: \dom(F) \times \dom(G^\ast) \to \mathbb{R}$ defined by
\begin{equation} \label{opt_abst_saddle}
L(u,w) = \scp{w}{Ku} + F(u) - G^\ast(w).
\end{equation}

\subsection{Total variation}
The minimization problems discussed in this thesis will be of the form
\begin{equation} \label{eq_abst_regul}
\min_{u \in X} \Phi(u) + \lambda \Psi(u).
\end{equation}
Where $\Phi$ is the data fidelity term and $\Psi$ is the regularization term.
One particularly successful penalty term $\Psi$ is the total variation(TV) semi-norm.
It has been used by Rudin, Osher, Fatemi in the classical ROF model~\cite{rudin1992nonlinear} to remove noise from images, in that case $\Phi(u) = \norm[2]{u - \unull}^2$.
The main advantage of TV is that it penalizes the perimeter of the level sets of $u$ thus penalizing many small jumps(noise) while allowing for sharp edges, therefore it is particularly well suited for the reconstruction of piecewise constant images.

Let $\Om$ be a bounded Lipschitz domain subset of $\mathbb{R}^d$.
For $u \in L^1_{loc}(\Om)$ a measure $\mu \in \mathfrak{M}(\Om, \mathbb{R}^d )$ is called the \emph{weak gradient} of $u$ if for every $\varphi \in \mathcal{D}(\Om, \mathbb{R}^d)$
\begin{equation*}
\int_\Om u \dive \varphi \dd x = - \int_\Om \varphi \dd \mu.
\end{equation*}
If such a $\mu$ exists call $\TV(u) = \norm[\mathfrak{M}]{\mu} = \norm[\mathfrak{M}]{\nabla u}$ the total variation of $u$.
It then holds that
\begin{equation}
\TV(u) = \sup \Bigset{ \int_\Om u \dive \varphi \, \dd x }
{ \varphi \in \mathcal{D}(\Om, \mathbb{R}^d), \norm[\infty]{\varphi} \leq 1}.
\end{equation}
It can be shown that
\begin{equation}
\BV(\Om) = \bigset{ u \in L^1(\Om)}
{ \nabla u \in \mathfrak{M}(\Om, \mathbb{R}^d)},
\end{equation}
the space of functions of bounded variation, with the norm $\norm[BV]{\cdot} = \norm[1]{\cdot} + \TV(\cdot)$ is a Banach space.
\begin{lemma} \label{lem_lq_tv}
Let $\varphi \colon [0, \infty) \to \extR$ be proper, convex, \lsc{} and increasing, then every function of the form
\begin{equation*}
\Psi(u) =
\begin{cases}
\varphi( \TV(u)), & \text{if } u \in \BV(\Om) \\
\infty , & \text{else }
\end{cases}
\end{equation*}
is proper, convex and \lsc{} on $L^q(\Om)$ for $q \in [1, \infty)$.
\end{lemma}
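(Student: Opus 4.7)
The plan is to verify the three properties — proper, convex, and lower semi-continuous — separately, exploiting the decomposition $\Psi = \varphi \circ \TV$ with the convention $\TV(u) = +\infty$ whenever $u \notin \BV(\Omega)$.

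First, properness is cheap. Taking $u \equiv 0$ gives $\TV(u) = 0$, and since $\varphi$ is proper there is some $t_0 \ge 0$ with $\varphi(t_0) < \infty$; monotonicity then forces $\varphi(0) \le \varphi(t_0) < \infty$, so $\Psi(0) = \varphi(0) < \infty$.

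For convexity, the key input is that $\TV$ is a seminorm on $\BV(\Omega)$, hence convex on $L^q(\Omega)$ with the $+\infty$ extension. For $u, v \in L^q(\Omega)$ and $\lambda \in (0,1)$, if either lies outside $\BV(\Omega)$ the right-hand side of the convexity inequality is $\infty$ and there is nothing to prove; otherwise $\lambda u + (1-\lambda) v \in \BV(\Omega)$, and combining the seminorm inequality with monotonicity and then convexity of $\varphi$ gives
\begin{equation*}
\Psi(\lambda u + (1-\lambda) v) \le \varphi(\lambda \TV(u) + (1-\lambda)\TV(v)) \le \lambda \Psi(u) + (1-\lambda) \Psi(v),
\end{equation*}
where the first step uses that $\varphi$ is increasing and the second that it is convex.

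The main work is the lower semi-continuity, where the essential input is the classical fact that $\TV$ is lsc with respect to $L^1_{loc}$-convergence, which transfers to $L^q(\Omega)$-convergence since $\Omega$ is a bounded domain and hence $L^q(\Omega) \hookrightarrow L^1(\Omega)$ for $q \in [1,\infty)$. Let $u_n \to u$ in $L^q(\Omega)$ and assume without loss of generality that $\liminf_n \Psi(u_n) = \ell < \infty$; pass to a subsequence (not relabeled) such that $\Psi(u_n) \to \ell$, so that $u_n \in \BV(\Omega)$ for large $n$. Extract a further subsequence along which $\TV(u_n) \to \tau$ for some $\tau \in [0, \infty]$. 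By lsc of $\TV$, $\TV(u) \le \tau$. In the generic case $\tau < \infty$ this yields $u \in \BV(\Omega)$, and chaining the monotonicity of $\varphi$ with its lsc gives
\begin{equation*}
\Psi(u) = \varphi(\TV(u)) \le \varphi(\tau) \le \liminf_n \varphi(\TV(u_n)) = \ell,
\end{equation*}
as required. The degenerate case $\tau = \infty$ forces $\varphi$ to be bounded above on $[0, \infty)$, and a convex, increasing, bounded function on $[0,\infty)$ must be constant, reducing $\Psi$ to a trivial situation that can be handled separately.

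The main obstacle is the last step: one really needs the lsc of $\TV$ on $L^q$, which is not completely trivial. The cleanest route is through the dual representation $\TV(u) = \sup\{ \int_\Omega u \dive \varphi \dd x : \varphi \in \mathcal{D}(\Omega, \mathbb{R}^d),\, \|\varphi\|_\infty \le 1\}$ stated in the preceding discussion, from which lsc on $L^q$ follows automatically because each linear functional $u \mapsto \int_\Omega u \dive \varphi \dd x$ is continuous on $L^q(\Omega)$ (as $\dive\varphi \in L^{q'}(\Omega)$), and a supremum of continuous functionals is lsc.
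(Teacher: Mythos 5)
The paper itself states this lemma without proof (it is imported from the cited reference \cite{KB}), so there is no in-paper argument to compare against; judged on its own, your overall structure is the standard one and is mostly sound: properness via $\Psi(0)=\varphi(0)\leq\varphi(t_0)<\infty$, convexity by combining the seminorm inequality for $\TV$ with monotonicity and convexity of $\varphi$, and lower semi-continuity of the extended functional $\TV$ on $L^q(\Om)$ as a supremum of the continuous linear functionals $u \mapsto \int_\Om u \dive \phi \dd x$ with $\phi \in \mathcal{D}(\Om,\mathbb{R}^d)$, $\norm[\infty]{\phi}\leq 1$. (One word is missing there: you should note that for $u\in L^q(\Om)\subset L^1(\Om)$ a finite supremum already implies that the weak gradient is a finite Radon measure, so the supremum is $+\infty$ exactly off $\BV(\Om)$; this is what makes the sup representation valid for the \emph{extended} functional.)

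The genuine gap is the degenerate case $\tau=\infty$. You correctly deduce that it forces $\varphi$ to be bounded, hence constant, but the claim that this leaves ``a trivial situation that can be handled separately'' is false under the definition in the statement: if $\varphi\equiv c$, then $\Psi = c + \indicator_{\BV(\Om)}$, and this is \emph{not} \lsc{} on $L^q(\Om)$, since $\BV(\Om)$ contains all smooth functions and is therefore dense in, but strictly smaller than, $L^q(\Om)$; a sequence of smooth $u_n\to u$ in $L^q(\Om)$ with $u\notin\BV(\Om)$ gives $\liminf_n\Psi(u_n)=c<\infty=\Psi(u)$. So the constant case is precisely where the statement, read literally with $\Psi=\infty$ off $\BV(\Om)$, breaks down; it has to be excluded (e.g.\ by reading ``increasing'' strictly, or by extending $\varphi$ to $[0,\infty]$ via $\varphi(\infty)=\lim_{t\to\infty}\varphi(t)$ and defining $\Psi$ accordingly off $\BV(\Om)$), and a proof must say this rather than defer it. The clean repair for every non-constant $\varphi$ is to observe that convexity, monotonicity and non-constancy force $\varphi(t)\to\infty$ as $t\to\infty$; hence if $\liminf_n\Psi(u_n)=\ell<\infty$ the subcase $\TV(u_{n_k})\to\infty$ cannot occur at all, and only your main case $\tau<\infty$ remains, where your chain of inequalities is correct. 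With that observation inserted (and the constant case either excluded or the definition adjusted), the argument is complete.
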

\begin{lemma} \label{lem_PoinWirt_TV}
There exists a continuous embedding $\BV(\Om) \hookrightarrow L^q(\Om)$ whenever $1 \leq q \leq \frac{d}{d-1}$.
Furthermore a Poincar\'{e}-Wirtinger inequality holds, \ie there exists a $c >0$ such that
\begin{equation} \label{eq_PoinWirt_TV}
\norm[q]{u - \frac{1}{\abs{\Om}} \int_{\Om} u \dd x} \leq c \norm[\mathfrak{M}]{\nabla u} = c\TV(u).
\end{equation}
\end{lemma}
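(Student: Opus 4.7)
The plan is to first establish the continuous embedding $\BV(\Om) \hookrightarrow L^{d/(d-1)}(\Om)$ by smooth approximation together with the classical Sobolev inequality, and then to derive \eqref{eq_PoinWirt_TV} by a compactness--contradiction argument; smaller exponents $q$ are then recovered by H\"older's inequality on the bounded domain $\Om$.

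For the embedding, I would first invoke the standard density result that for every $u \in \BV(\Om)$ there exists a sequence $(u_n) \subset C^\infty(\overline{\Om})$ with $u_n \to u$ in $L^1(\Om)$ and $\int_\Om \abs{\nabla u_n}\,\dd x \to \TV(u)$ (strict or intermediate convergence). Because $\Om \subset \mathbb{R}^d$ is a bounded Lipschitz domain, the classical Sobolev embedding $W^{1,1}(\Om) \hookrightarrow L^{d/(d-1)}(\Om)$ applies to each smooth $u_n$, giving
\[
\norm[L^{d/(d-1)}]{u_n} \leq C\bigl(\norm[L^1]{u_n} + \norm[L^1]{\nabla u_n}\bigr),
\]
whose right-hand side converges to $C\norm[\BV]{u}$. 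Since $(u_n)$ is bounded in the reflexive space $L^{d/(d-1)}(\Om)$, a subsequence converges weakly to a limit which, by uniqueness of the $L^1$-limit, must be $u$; lower semi-continuity of the norm then yields $\norm[L^{d/(d-1)}]{u} \leq C\norm[\BV]{u}$.

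For the Poincar\'e--Wirtinger estimate I would proceed in two stages. First, I would establish the $q=1$ version by contradiction: assuming no constant works produces a sequence $(u_n) \subset \BV(\Om)$ with $\int_\Om u_n\,\dd x = 0$, $\norm[L^1]{u_n}=1$ and $\TV(u_n) \to 0$, hence bounded in $\BV$. Applying the compact embedding $\BV(\Om) \hookrightarrow L^1(\Om)$ (Rellich--Kondrachov for $\BV$), a subsequence converges in $L^1$ to some $u$ with $\norm[L^1]{u}=1$ and zero mean; lower semi-continuity of $\TV$ (Lemma~\ref{lem_lq_tv}) forces $\TV(u)=0$, so $u$ is constant and therefore $u \equiv 0$, contradicting $\norm[L^1]{u}=1$. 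For the full exponent $q = d/(d-1)$ I would then combine this with the previous step: for $u$ with vanishing mean,
\[
\norm[L^q]{u} \leq C\norm[\BV]{u} = C\bigl(\norm[L^1]{u} + \TV(u)\bigr) \leq C(c+1)\,\TV(u),
\]
and intermediate $q$ are absorbed by H\"older.

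The most delicate ingredient will be the smooth-approximation step: the strict-convergence version of density is genuinely needed, because the Sobolev estimate is in terms of $\norm[L^1]{\nabla u_n}$, which has to converge to $\TV(u)$ rather than merely bound it from below, so that the passage to the limit actually produces a bound in terms of $\norm[\BV]{u}$. The Lipschitz regularity of $\partial\Om$ enters both as a prerequisite for the $W^{1,1}$ Sobolev embedding (implicitly through an extension operator) and for the compactness of $\BV(\Om) \hookrightarrow L^1(\Om)$ used in the contradiction argument; without it both pillars of the proof would fail.
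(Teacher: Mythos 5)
The paper never proves this lemma: Section~\ref{sec_math} is explicitly a summary of background imported from~\cite{KB}, and Lemma~\ref{lem_PoinWirt_TV} is stated there without proof, so there is no internal argument to measure yours against. Your proof is the standard textbook argument and is correct: strict (intermediate) convergence of smooth approximations plus the $W^{1,1}(\Om)\hookrightarrow L^{d/(d-1)}(\Om)$ Sobolev inequality and weak lower semi-continuity of the norm give the embedding, and the compactness--contradiction argument based on the compact embedding $\BV(\Om)\hookrightarrow L^1(\Om)$ together with \lsc{} of $\TV$ gives the mean-value Poincar\'e inequality, the critical exponent then following from the embedding and smaller $q$ from H\"older on the bounded domain; you also correctly identify where the Lipschitz boundary and strict convergence are genuinely needed. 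Two minor points to make explicit: the step ``$\TV(u)=0$ implies $u$ constant, hence $u\equiv 0$'' uses that $\Om$ is connected, which the word ``domain'' (and the ball $B_\rho(0)$ actually used later) guarantees; and for $d=1$ the critical exponent is $q=\infty$, where $L^{q}$ is not reflexive, so the weak-compactness step should be replaced by weak-* compactness (or one simply notes that the relevant case in this thesis is $d=2$).
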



\begin{kor} \label{kor_direct_method}
Let $X$ be a reflexive Banach space and $\Phi\colon X \to \extR $ and $\Psi\colon X \to \extR$ be convex, weakly \lsc{} and bounded from below such that $F = \Phi + \lambda \Psi$ is proper and radially unbounded then $\min_{ u \in X} F(u)$ has a minimizer.
\end{kor}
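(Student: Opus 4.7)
The plan is to deduce this corollary directly from Lemma~\ref{lem_direct_method} (the direct method) by verifying that $F = \Phi + \lambda\Psi$ inherits each required hypothesis from its summands. Properness and radial unboundedness of $F$ are given by assumption, so the task reduces to checking three algebraic closure properties of the collection of convex, weakly \lsc{}, bounded-below functions on $X$ under nonnegative linear combinations.

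First I would observe that $\lambda > 0$ (as stated in the introductory discussion preceding the definition of the regularization framework), so $\lambda\Psi$ is convex and weakly \lsc{} whenever $\Psi$ is: convexity because positive scaling preserves the defining inequality $f(\mu x + (1-\mu)y) \leq \mu f(x) + (1-\mu) f(y)$, and weak lower semi-continuity because $\liminf$ commutes with multiplication by a nonnegative constant. Next, sums: if $u_n \rightharpoonup u$, then
\[
F(u) = \Phi(u) + \lambda \Psi(u) \leq \liminf_{n\to\infty}\Phi(u_n) + \lambda\liminf_{n\to\infty}\Psi(u_n) \leq \liminf_{n\to\infty} F(u_n),
\]
using the sequential characterization of weak lower semi-continuity established just after the definition of \lsc{} functions, together with the superadditivity of $\liminf$. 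Convexity of $F$ follows by adding the defining convexity inequalities for $\Phi$ and $\lambda\Psi$.

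Finally, boundedness from below of $F$ is immediate: if $\Phi \geq c_1$ and $\Psi \geq c_2$ then $F \geq c_1 + \lambda c_2 > -\infty$. Combined with the hypotheses that $F$ is proper and radially unbounded, all conditions of Lemma~\ref{lem_direct_method} are met, and applying it to $f = F$ yields the existence of a minimizer $u^\ast \in X$ of $\min_{u \in X} F(u)$.

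There is essentially no hard step here; the only subtlety worth flagging is that the sign condition $\lambda > 0$ is needed to preserve weak lower semi-continuity and convexity of the scaled regularizer, and that one must invoke superadditivity of $\liminf$ rather than linearity when combining the two sequences of function values. Otherwise the proof is a routine verification that the class of admissible functionals for the direct method is stable under nonnegative linear combinations.
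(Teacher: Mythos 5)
Your proposal is correct and follows essentially the same route as the paper: the paper's proof simply notes that $F$ inherits convexity and weak lower semi-continuity as a sum of functions with these properties (with properness, boundedness below, and radial unboundedness given by hypothesis) and then applies Lemma~\ref{lem_direct_method}. Your write-up just makes the routine verifications (scaling by $\lambda>0$, superadditivity of $\liminf$, sum of lower bounds) explicit.
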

\begin{proof}
The direct method of Lemma~\ref{lem_direct_method} can be applied to $F$, which is convex as the sum of convex functions and \lsc{} as the sum of to functions with this property.
\end{proof}

\subsection{Tikhonov regularization} \label{ssec_tichi}
Consider an optimization problem of the form
\begin{equation} \label{opt_tikh_TV}
\min_{u \in L^q(\Om)} \norm[Y]{ A u - \unull }^r + \lambda \TV(u)
\end{equation}
with $Y$ a Banach space and $A \in \mathcal{L}( L^q(\Om), Y)$, \ie $A$ linear and bounded.
Then $\Phi \colon L^q(\Om) \to \extR, \Phi(u) = \norm[Y]{ A u - \unull }^r $ is proper, convex, weakly \lsc{} and bounded from below for any $r \in [1, \infty)$ and finite for every $u$.
Let $\Psi \colon L^q(\Om) \to \extR, \Psi(u) = \TV(u)$ in the sense of Lemma \ref{lem_lq_tv}, \ie $\Psi(u) = \infty$ if $ u \notin \BV(\Om)$.
Then $ F = \Phi + \lambda \Psi $ is proper, convex, weakly \lsc{} and bounded from below on the reflexive Banach space $L^q(\Om)$.
In order to show the existence of a minimizer using Lemma~\ref{kor_direct_method} it still needs to be shown that the combined term is radially unbounded, this gives a restriction on $q$ as well as a condition for the operator $A$.
\begin{lemma} \label{lem_combined_term_coercive}
Let $1 < q \leq \frac{d}{d-1}$ and $A \in \mathcal{L}( L^q(\Om), Y)$ be a linear mapping that does not map constant function to zero then with $\Psi(u) = \TV(u)$ and $\Phi(u) = \norm[Y]{ A u - \unull }^r $ the combined term $ F = \Phi + \lambda \Psi $ is radially unbounded.
Under these assumption the problem \ref{opt_tikh_TV} has as solution.
\end{lemma}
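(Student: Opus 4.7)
My plan is to argue radial unboundedness by contradiction: assume there is a sequence $(u_n) \subset L^q(\Om)$ with $\|u_n\|_{L^q} \to \infty$ while $F(u_n)$ remains bounded, and derive a bound on $\|u_n\|_{L^q}$ contradicting this. Once radial unboundedness is established, existence of a minimizer follows directly from Corollary~\ref{kor_direct_method}, since $F$ has already been shown to be proper, convex, weakly \lsc{} and bounded from below.

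The key decomposition is into mean and mean-zero part: write
\begin{equation*}
u_n = \bar u_n + (u_n - \bar u_n), \qquad \bar u_n = \frac{1}{\abs{\Om}} \int_\Om u_n \dd x.
\end{equation*}
If $F(u_n) \leq M$ for all $n$, then both $\TV(u_n) \leq M/\lambda$ and $\norm[Y]{A u_n - \unull}$ are bounded. The restriction $1 < q \leq d/(d-1)$ is exactly the range in which the Poincar\'e-Wirtinger inequality from Lemma~\ref{lem_PoinWirt_TV} applies, giving
\begin{equation*}
\norm[q]{u_n - \bar u_n} \leq c \, \TV(u_n),
\end{equation*}
so the mean-zero part stays bounded in $L^q(\Om)$.

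To control the mean $\bar u_n$, I use the hypothesis that $A$ does not annihilate constants. Denote by $\mathbf{1}$ the constant function with value $1$ on $\Om$; then $A\mathbf{1} \neq 0$, so $\norm[Y]{A\mathbf{1}} > 0$. By linearity,
\begin{equation*}
A u_n = \bar u_n \, A\mathbf{1} + A(u_n - \bar u_n),
\end{equation*}
and since $A$ is bounded and $u_n - \bar u_n$ is bounded in $L^q(\Om)$, the second summand is bounded in $Y$. Since $\norm[Y]{A u_n}$ is bounded, it follows that $\abs{\bar u_n} \norm[Y]{A\mathbf{1}}$ is bounded, hence so is $\abs{\bar u_n}$. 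Combining, $\norm[q]{u_n} \leq \abs{\Om}^{1/q} \abs{\bar u_n} + \norm[q]{u_n - \bar u_n}$ stays bounded, contradicting $\norm[q]{u_n} \to \infty$.

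The main delicate point is the correct use of the $A\mathbf{1} \neq 0$ hypothesis together with boundedness of $A$ on the mean-zero part: without the non-annihilation of constants, a sequence of growing constants $\bar u_n \mathbf{1}$ would have $\TV = 0$ and could still escape to infinity in $L^q(\Om)$ while keeping $A u_n$ bounded, breaking coercivity. Also one must check that the Poincar\'e-Wirtinger inequality is available, which is why the restriction on $q$ appears in the statement. With radial unboundedness in hand, Corollary~\ref{kor_direct_method} applied to $\Phi$ and $\lambda \Psi$ produces the desired minimizer of~\eqref{opt_tikh_TV}.
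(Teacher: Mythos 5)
Your proposal is correct and follows essentially the same route as the paper: decompose $u_n$ into its mean and mean-zero part, control the mean-zero part via the Poincar\'e--Wirtinger inequality of Lemma~\ref{lem_PoinWirt_TV} and the mean via $A\mathds{1}\neq 0$ together with boundedness of $A$, then invoke the direct method. The only difference is cosmetic: you argue by contradiction (bounded sublevel sets), whereas the paper runs the same estimates as a direct two-case argument showing $F(u_n)\to\infty$.
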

\begin{proof}
For $u \in L^q(\Om)$ set
\begin{equation}
P u = \frac{1}{\abs{\Om}} \int_{\Om} u \dd x,
\end{equation}
in the sense that $Pu$ is the constant function then $u =  u - Pu + Pu$.
Let  $(u_n)$ be sequence with $\norm{u_n} \to \infty$, since $\norm{u_n} \leq \norm{u_n - Pu_n} + \norm{Pu_n}$ either $ \norm{u_n - Pu_n} \to \infty$ or $\norm{u_n - Pu_n}$ is bounded and $\norm{Pu_n} \to \infty$ must hold.
In the first case it follows from Lemma~\ref{lem_PoinWirt_TV} that $\TV(u_n) \to \infty$, and thus $F(u_n) \to \infty$.
In the second case let $\mathds{1}$ be the constant function with value $1$, then $P u_n = c_n \mathds{1}$ for a sequence of real values $(c_n)$ with $\abs{c_n} \to \infty$.
Since it is assumed that $A$ does not map constant functions to zero $A \mathds{1} \neq 0$ it follows that $\norm[Y]{A P u_n } = \norm[Y]{ c_n A \mathds{1} } = \abs{c_n} \norm[Y]{ A \mathds{1}} \to \infty$ and thus also $\norm[Y]{A Pu_n - \unull} \to \infty$.
Since $ \norm[Y]{A u_n - \unull} \geq \norm[Y]{A P u_n - \unull} - \norm[Y]{A (u_n - P u_n)}$ and by assumption $\norm{u_n - P u_n}$ is bounded it follows that $\norm[Y]{A u_n - \unull} \to \infty$ which implies $\Phi(u_n) \to \infty$.
Therefore $F$ is radially unbounded in either case and the direct method of Lemma~\ref{lem_direct_method} can be applied to guarantee the existence of a minimizer of~\ref{opt_tikh_TV}.
\end{proof}

\chapter{Problem and Mathematical Model} \label{sec_ProbMod}
\section{Regularized Reconstruction}
Rather then directly invert the Radon transform as discussed in Section~\ref{ssec_fbp} which is prone to amplifying noise, one can use regularized reconstruction by solving Tikhonov optimization problems as discussed in Section~\ref{ssec_tichi}.
Let $\unull \in L^2(\Dm)$ be the given data.
Let $R \colon L^2(\Om) \to L^2(\Dm)$ be the Radon transform as discussed in Theorem \ref{th_radon_l2} which showed that $R$ is linear and continuous.
Consider the problem
\begin{equation} \label{opt_unconst}
\min_{ u \in X  } \quad \frac{1}{2} \norm[2]{ \R u - \unull}^2
+ \lambda \TV(u) \tag{$P$}
\end{equation}
where $\lambda >0 $ is  a regularization parameter.
Solving \eqref{opt_unconst} will yield  a regularized reconstruction.
This functional has been used in~\cite{sidky2012convex}~\cite{hamalainen2014total}, for this very porpose.
As applying $R$ to constant functions gives positive multiples of $(R \mathds{1})(s,\theta) = 2 \sqrt{\rho^2 - s^2} $ which does not result in zero so that Lemma~\ref{lem_combined_term_coercive} can be applied to yield that Problem~\eqref{opt_unconst} has a solution.
Since $\norm[2]{\cdot}$ is strictly convex the entire functional is strictly convex and thus the minimizer is unique.


\section{Constrained Problem}
Now a model for metal artifacts is devoloped and used to augment the regularization problem with inequality contraints.  
If a ray passes through a very dense area, and is attenuated so strongly that the detector cannot differentiate its signal from noise, \ie the received signal is unknown but lower than a certain threshold $\varepsilon$, then $I(x_D) < \varepsilon$.
The intensity at the source  $I(x_S)$ is considered constant, and therefore $(\R f)(\varphi, s) > -\log(\varepsilon) + \log(I(x_S)) = \colon C$.
Thus, this process of beam cancellation is represented by ``capping'' the sinogram,
\ie defining a certain threshold $C$ and considering the values for each point in the sinogram that exceeds $C$
as unknown, but at least as great as the threshold.
The domain of the sinogram is denoted as $\Dm$, while the portion where the threshold is exceeded is denoted as $\Dm_0$.
The capped sinogram is called $\unull$ and considered to be the given data.
It is assumed - as in total variation denoising - that $f$ has a certain spatial structure for the purpose of this thesis it is assumed
to be piecewise constant and therefore to admit a low total variation semi-norm.
In trying to reconstruct $f$ look for an approximation $u$ that has also low TV-norm and
for which an application of the Radon transform will produce a sinogram that is similar to that of $f$ in $\Dm \setminus \Dm_0$
and also has values greater or equal than $C$ in $\Dm_0$.
This leads to the optimization problem

\begin{equation} \label{opt_const}
\begin{cases}
\min_{ u \in X  } \quad \frac{1}{2} \norm[2]{ \R u - \unull}^2
+ \lambda \TV(u) \\
\text{s.t. }  (\R u) \mid_{\Dm_0} \geq C. \tag{$C$}
\end{cases}
\end{equation}
Here $\lambda > 0$ serves as a balancing parameter between
regularization via the total variation semi-norm, and the discrepancy
between $\R u$ and $\unull$, which is measured on $\Dm \setminus \Dm_0$
\ie only on sinogram data that is considered to be correct.

Using the indicator function \eqref{opt_const} can be reformulated as
an unconstrained problem
\begin{equation}	\label{opt_const_I}
  \min_{u\in X} \quad \frac{1}{2} \norm[2]{ \R u - \unull}^2
  + \lambda \TV(u) + \indicator_{ \{\R u\mid_{\Dm_0} \geq C \}}(u).
\end{equation}
%

The set $ K = \set{u \in L^2(\Om)}{\R u \mid_{\Dm_0} \geq C \text{ almost everywhere}}$ is convex and closed which implies that $\indicator_K$ is convex and \lsc.
Assuming that
\begin{equation}
\Dm_0 \subset [-\rho + \varepsilon, \rho - \varepsilon] \times [-\frac{\pi}{2},\frac{\pi}{2}]
\end{equation}
for some $\varepsilon >0 $ guarantees that $\indicator_{ \{\R u\mid_{\Dm_0} \geq C \}}$ is proper as then choosing $ u \equiv m$ constant for $ 0< s \leq \rho - \varepsilon$ will result in
\begin{equation*}
(\R u)(s, \phi) =
\int_{-\sqrt{\rho^2 - s^2}}^{\sqrt{\rho^2 - s^2}} u( s x_0 + t x_1) \dd t
= 2 \sqrt{\rho^2 - s^2} m
\geq 2 m \varepsilon \sqrt{\frac{\rho}{\varepsilon} - 1}
\geq C
\end{equation*}
if $m \geq \frac{1}{2} (\rho \varepsilon - \varepsilon^2)^{-\frac{1}{2}}$ is chosen.
Thus the terms $\Phi(u) = \frac{1}{2} \norm[2]{ \R u - \unull}^2 + \indicator_{ \{\R u\mid_{\Dm_0} \geq C \}}(u)$
and $\Psi(u)  = \TV(u) $ are convex and \lsc{} on $X$ and sufficiently large constant functions $u$ will satisfy the constraints thus $\Phi(u)$ is finite as well as $\Psi(u) = 0$ thus the combined term $F = \Phi + \lambda \Psi$ is proper.
In Lemma~\ref{lem_combined_term_coercive} it was shown that
that $F$ without the indicator function is radially unbounded so $F$ with the constraints must be radially unbounded as well.
Thus Lemma~\ref{lem_combined_term_coercive} can again be used to yield the existence and uniqueness of the minimizer.
\section{Hard vs Soft Constraints} \label{ssec_HC}
The parameter $\lambda$ is not needed if one enforces $ \R u = \unull$ in $\Dm \setminus \Dm_0$ as a hard constraint.
Then, the minimization problem becomes

\begin{equation} \label{opt_main_hc}
  \begin{cases}
  \min_{ \substack {u\in X } } \quad  \TV(u) \\
  \text{s.t. } \R u\mid_{\Dm_0} \geq C \quad \text{and } \quad \R u\mid_{\Dm \setminus \Dm_0} = \unull
  \end{cases}
\end{equation}
which is equivalent to

\begin{equation} \label{opt_main_hc_unconstr}
  \min_{ \substack {u\in X } } \quad  \TV(u)
  +  \indicator_{ \{\R u\mid_{\Dm_0} \geq C \}}(u) + \indicator_{ \{\R u\mid_{\Dm \setminus \Dm_0} = \unull\}}(u).
\end{equation}

\chapter{Numerical Solution} \label{sec_NumSol}
In this section the unconstrained problem \eqref{opt_unconst} and the constrained problem \eqref{opt_const} are discretized and numerical algorithms applied to the discrete problems.
The domain and image are discretized as an array of pixels and finite differences are used to represent  the TV seminorm similar to the image processing problems of denoising and deblurring in ~\cite{KB} and~\cite{Pock_dual}.
A discrete version of the Radon transform and its properties are presented, which is consistent with existing implementations.
In order to numerically solve the problems the primal dual algorithm of  Chambolle and Pock~\cite{Pock_dual} is used.
For the unconstrained problem this total variation regularization for tomography was already used in~\cite{sidky2012convex} and~\cite{hamalainen2014total}, as a reconstruction method.
\section{Discretization}
First the image domain is discretized as a regular grid of pixels.
Let $n \in \mathbb{N}$ and $\Om_h = \set{(i,j)}{ 1\leq i,j \leq n}$ then for a given grid spacing $h>0$ let the positions of the corresponding pixels be at
\begin{equation*}
  G_h = \Bigset{(x_i,y_j)}{x_i = h\left(i-\frac{n+1}{2}\right),  y_j = h\left(j - \frac{m+1}{2}\right), (i,j) \in \Om_h },
\end{equation*}
thus $G_h$ is a regular grid centered around the origin.
Each pixel associated with $(i,j) \in \Om_h$ is assigned a value in $\mathbb{R}$ and thus the space of real valued functions on $\Om_h$ is identified with $X = \mathbb{R}^{n\times n}$.
This finite dimensional vector space is equipped with the scalar product
\begin{equation}
  \scp[X]{u}{v} = \sum_{i,j} u_{i,j} v_{i,j}, \quad u,v \in X
\end{equation}
where the sum is understood to be over all $(i,j) \in \Om_h$.
In order to discretize the gradient $\nabla$ a finite difference scheme with spacing $h$ is used, employing forward differences.
Neumann boundary condition are prescribed at the boundary of the image, due to the forward difference scheme this is only visible at the right and bottom boundary.
This results in the operator $\nabh \colon \mathbb{R}^{n\times n} \to \mathbb{R}^{n\times n \times 2}$, $u \mapsto (\partial_x u,\partial_y u)$ with
\begin{equation*}
  \partial_x u =
  \begin{cases}
  \frac{u_{i+1,j} - u_{i,j}}{h} &\text{if } i < n \\
  0                             &\text{if } i = n
  \end{cases}
  \quad \text{and} \quad
  \partial_y u =
  \begin{cases}
  \frac{u_{i,j+1} - u_{i,j}}{h} &\text{if } j < m \\
  0                             &\text{if } j = n.
  \end{cases}
\end{equation*}
The vector space $V = \mathbb{R}^{n \times n \times 2}$ is equipped with the scalar product
\begin{equation}
\scp[V]{p}{q} = \sum_{i,j} p_{i,j,1} q_{i,j,1}+\sum_{i,j} p_{i,j,2} q_{i,j,2},
\end{equation}
which induces the norm $\norm[2]{p} = \sqrt{\scp[V]{p}{p} }$.
The adjoint of $\nabh$ is then a discretization of the negative divergence with backward differences and Dirichlet boundary conditions.
Specifically the divergence is the operator $\divh \colon V \to X$ for which $\scp[V]{\nabh u}{p} = \scp[X]{u}{-\divh p}$ it is calculated to be $\divh p = \frac{1}{h}(d_1(p) + d_2(p))$ with
\[
  d_1(p)_{i,j} =
  \begin{cases}
    p_{1,j,1}             &\text{if } i = 1     \\
    p_{i,j,1}-p_{i-1,j,1} &\text{if } 1 < i < n \\
   -p_{n-1,j,1}           &\text{if } i = n \\
  \end{cases}
  ,\quad
\]
\[
  d_2(p)_{i,j} =
  \begin{cases}
    p_{i,1,2}             &\text{if } j = 1     \\
    p_{i,j,2}-p_{i,j-1,2} &\text{if } 1 < j < m \\
   -p_{i,n-1,2}           &\text{if } j = m \\
  \end{cases}.
\]
Note that then $\norm{\nabh}^2 < \frac{8}{h^2}$ holds, as calculated in~\cite{KB}, additionally $ \nabh u = 0 $ if and only if $u$ is constant.
The one-norm on $V$ is defined as
\begin{equation}
  \norm[1]{q} = \sum_{i,j} \sqrt{q_{i,j,1}^2 + q_{i,j,2}^2}, \quad q \in V
\end{equation}
this is needed because the one-norm of the gradient will serve as a discretization of the $\TV$-semi-norm.

Next the Radon transform will be discretized.
It is simpler to start by presenting a discretization of linear backprojection -- the adjoint of the Radon transform as presented in Lemma~\ref{lem_lin_back_proj}.
Once this discretization  $\Rh^\ast$ is obtained its adjoint  $(\Rh^\ast)^\ast = \Rh$ will be a discretization of the Radon transform.
Let $\theta_1, \dots, \theta_N$ be given angles and assume there are $M$ discrete beams, parallel and evenly spaced such that the distance between to neighbouring beams is $\Delta s >0$.
The number of these discrete beams corresponds to the number of detector elements on a CT scanner and $\Delta s$ length of the detector elemens, in this discrete setting these can be thought of as bins.
Then $\Rh^\ast \colon \mathbb{R}^{N \times M} \to \mathbb{R}^{n\times n}$ refer to Figure~\ref{fig_beam_geom_num} for an illustration of the geometry used.
Assume further that $M$ is even and that the center of rotation is exactly between the middle two beams.
Let $v \in \mathbb{R}^{N \times M}$ be the data to be backprojected.
Every point $P=(i,j)$ of the image domain is assigned a position $x = h(i-\frac{n+1}{2})$ and $y = h(j - \frac{m+1}{2})$, according to the grid $G_h$.
For each angle $\theta_l$ the point $P$ is projected onto the detector line where it has the position $s = x \cos(\theta_l) + y \sin( \theta_l)$, thus the virtual beam through $P$ will be between the beams with indices $k = \lfloor \frac{s}{\Delta s } - \frac{1}{2} \rfloor + \frac{M+1}{2}$ and $k+1$.
Performing linear interpolation of the data $v$ between these two beams will give a value $\beta_l$ which is the result of the back projection from \emph{this} angle.
Then the total value of the back projection at point $P$ will be the sum over all angles $(\Rh^\ast(v))_{i,j} = \sum_{l=1}^{N} \beta_l$.
In summary $\Rh^\ast v$ is defined by
\begin{equation}
  (\Rh^\ast v)_{i,j} = \sum_{l=1}^{N} \alpha(i,j,l) v_{l,k(i,j,l)} + \big(1-\alpha(i,j,l)\big) v_{l,k(i,j,l)+1}
\end{equation}
with $k(i,j,l) = \lfloor \sigma(i,j,l) \rfloor  + \frac{M+1}{2} $ and $\alpha(i,k,l) = \sigma(i,j,l) - \lfloor \sigma(i,j,l) \rfloor$ where $\sigma(i,j,l) = \frac{1}{\Delta s} \big[ h(i- \frac{n+1}{2}) \cos(\theta_l) + h(j - \frac{m+1}{2})\sin(\theta_l) \big]- \frac{1}{2}$.
This defines a linear function $\Rh^\ast \colon \mathbb{R}^{N \times M} \to \mathbb{R}^{n\times n}$, whose adjoint $\Rh$ is a discretization of the Radon transform.
This discrete Radon trasform in turn uses the same geometry but applies the adjoint of linear interpolation which is ``splitting'' the value of $u_{i,j}$ into bins associated to the beams $k$ and $k+1$ proportional to $\alpha$ and $1- \alpha$ respectively.
Formally $\Rh$ is defined as:
\[
  (\Rh u)_{l,k} = \sum_{i,j} u_{i,j} \gamma_{i,j,k} \quad \text{with }
  \gamma_{i,j,k} =
  \begin{cases}
    \alpha(i,j,l)      &\text{if } k = k(i,j,l) \\
    (1- \alpha(i,j,l)) &\text{if } k = k(i,j,l) +1 \\
    0                  &\text{else}.
  \end{cases}
\]
Then the discrete Radon transform is an operator $\Rh \colon X \to Z$ with $Z = \mathbb{R}^{N \times M}$.

\begin{figure}[ht]
        \centering
        \includegraphics[width=\textwidth,trim=0cm 8cm 0cm 8cm, clip]{./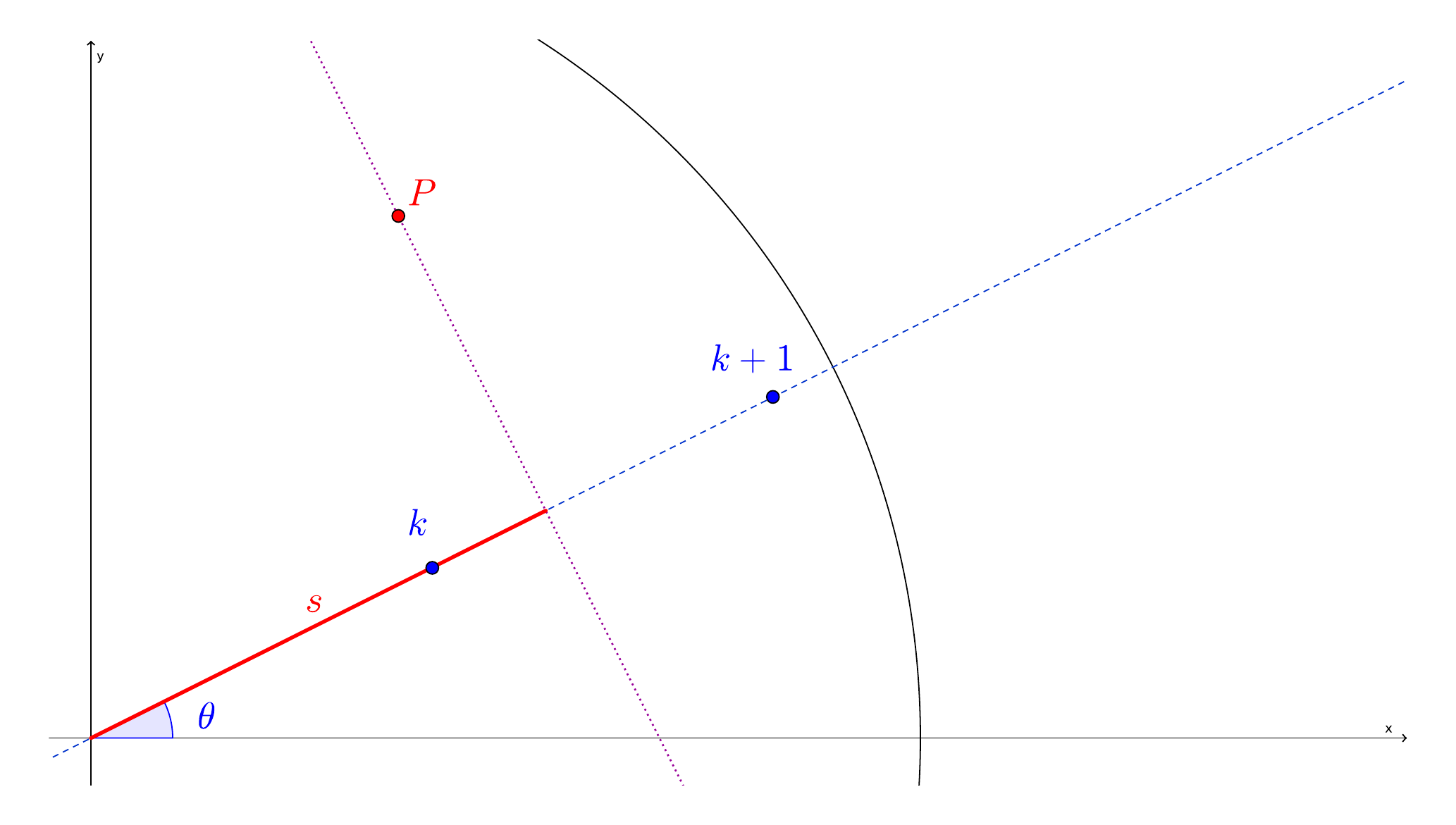}
                \caption[]{Discrete Radon transform: A point $P$ when projected onto the detector line where it has a offset $s$, the value at $P$ is split between bins $k$ and $k+1$ proportionally to the distance from the bins. For the linear back projection the value at $P$ is determined by linear interpolation between values of bins $k$ and $k+1$.}
                \label{fig_beam_geom_num}
\end{figure}
It is clear that $\Rh$ is linear and that constant functions are not mapped to zero $\Rh \mathds 1 \neq 0$.
The next lemma established a crude bound on $\norm{\Rh}$.
\begin{lemma} \label{lem_Rh_bound}
    For the discrete Radon transform with with $\Delta s < h \sqrt{2}$ it holds that $\norm{\Rh} \leq \sqrt{2 N (\sqrt{2} n + 1)}$
\end{lemma}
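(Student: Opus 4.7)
The plan is to reduce the operator-norm bound to a per-bin ``column-sum'' estimate via a weighted Cauchy--Schwarz, and then control that column sum by a geometric count of pixels lying in the preimage of one detector bin.

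First, with $\gamma_{i,j,k,l} \in [0,1]$ being the coefficients defining $R_h$, I would write
\[
(R_h u)_{l,k}^2 = \Bigl(\sum_{i,j} \sqrt{\gamma_{i,j,k,l}}\,\sqrt{\gamma_{i,j,k,l}}\, u_{i,j}\Bigr)^2 \le C_{l,k}\sum_{i,j}\gamma_{i,j,k,l}\, u_{i,j}^2,
\qquad C_{l,k} := \sum_{i,j}\gamma_{i,j,k,l},
\]
by weighted Cauchy--Schwarz. Summing over $(l,k)$, interchanging the order of summation, and using the partition-of-unity property that follows directly from the definition of $\gamma$, namely $\sum_k \gamma_{i,j,k,l} = \alpha(i,j,l)+(1-\alpha(i,j,l))=1$ for every $(i,j,l)$ (hence $\sum_{l,k}\gamma_{i,j,k,l}=N$), this gives
\[
\|R_h u\|^2 \le \max_{l,k} C_{l,k}\cdot \sum_{i,j,l,k}\gamma_{i,j,k,l}u_{i,j}^2 = N\,\max_{l,k} C_{l,k}\cdot \|u\|^2.
\]
So the task reduces to bounding $\max_{l,k} C_{l,k}$ by $2(\sqrt{2}\,n+1)$.

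For that bound, since $\gamma_{i,j,k,l}\le 1$, the estimate $C_{l,k}\le \#\{(i,j) : \gamma_{i,j,k,l}\ne 0\}$ holds, and by the definition of $\gamma$ the nonzero pixels are exactly those with $k(i,j,l)\in\{k-1,k\}$. Equivalently, their projected offset $x_i\cos\theta_l+y_j\sin\theta_l$ lies in an $s$-interval of length $2\Delta s$, so these pixels form a strip of width $2\Delta s$ perpendicular to direction $\theta_l$ inside the image. This strip is the union of two adjacent strips of width $\Delta s$. For each single strip of width $\Delta s$ I would bound the number of grid points it contains by noting the strip's length in the image is at most the diagonal $\sqrt{2}\,n h$, and then projecting the grid centres along the strip axis: using $\Delta s<h\sqrt{2}$ to ensure the perpendicular width stays below a pixel diagonal, the lattice-point count in such a strip is at most $\sqrt{2}\,n+1$ (diagonal length in pixel units, plus a boundary correction). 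Adding the two single-width contributions gives $C_{l,k}\le 2(\sqrt{2}\,n+1)$, and combining with the reduction above yields $\|R_h\|^2\le 2N(\sqrt{2}\,n+1)$.

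The main obstacle is the lattice-point count in Step 2: it has to be uniform in $\theta_l$, so one must verify that for every angle the strip of width $<h\sqrt 2$, no matter how it is tilted relative to the square grid, contains at most $\sqrt{2}\,n+1$ pixel centres. The condition $\Delta s<h\sqrt{2}$ is exactly what keeps the strip thinner than a pixel diagonal, so that pixel centres cannot accumulate in the transverse direction and the count is controlled solely by the strip's length, i.e. by the grid diagonal.
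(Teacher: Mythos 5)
Your first two steps are sound and are essentially a cleaner variant of what the paper does: where the paper applies plain Cauchy--Schwarz to $\bigl(\sum_{i,j}\gamma_{i,j,k,l}u_{i,j}\bigr)^2$ and then counts the support of $\gamma$, you apply a weighted Cauchy--Schwarz and use the exact partition of unity $\sum_k\gamma_{i,j,k,l}=1$, which produces the factor $N$ neatly. Both arguments then stand or fall with the same geometric ingredient: a uniform bound on how many pixel centres a single detector bin can see.

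That counting step is where your proposal has a genuine gap. The claim that a strip of width $\Delta s<h\sqrt2$ contains at most $\sqrt2\,n+1$ points of $G_h$ is false: take $\theta_l=0$ and $\Delta s\in(h,h\sqrt2)$; the $s$-interval of one bin is then longer than the column spacing $h$, so it can contain the $x$-coordinates of two full columns, i.e.\ $2n$ pixel centres, which exceeds $\sqrt2\,n+1$ already for $n\ge 2$ (the same configuration also defeats the ``at most $n+1$'' count asserted in the paper's own proof, so you are in good company, but it does not rescue the argument). The hypothesis $\Delta s<h\sqrt2$ does \emph{not} prevent transverse accumulation of centres; what it does give, via $\max\bigl(\abs{\cos\theta_l},\abs{\sin\theta_l}\bigr)\ge 1/\sqrt2$, is that each grid row (or column) meets one bin in at most two pixels, so the correct uniform count is $2n$ per bin and $4n$ for the support $\{k(i,j,l)\in\{k-1,k\}\}$. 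Feeding that into your crude estimate $C_{l,k}\le\#\{\gamma_{i,j,k,l}\neq 0\}$ only yields $\norm{\Rh}^2\le 4Nn$, which is weaker than the claimed $2N(\sqrt2\,n+1)$ for all $n\ge 2$, so the stated constant does not follow from your argument as written. To close the gap along your route you must not discard the weights: for a fixed row the (at most four) pixels feeding bins $k-1$ and $k$ have $\sigma$-values spaced more than $\tfrac12$ apart, and a short computation shows their interpolation weights sum to less than $\tfrac83$, whence $C_{l,k}\le\tfrac83 n\le 2(\sqrt2\,n+1)$; with that replacement for your Step~3 the proof closes, but the lattice-point count you actually state is wrong and the final inequality does not follow from it.
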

\begin{proof}
\begin{align}
    \norm[2]{\Rh u}^2 
    &= \sum_{l,k} (\sum_{i,j} u_{i,j} \gamma_{i,j})^2
    \leq \sum_{l,k} (\sum_{i,j} u_{i,j}^2)(\sum_{i,j} \gamma_{i,j}^2)   \\
    &\leq \norm[2]{u}^2 \sum_{l=1}^{N} \sum_{i,j} \gamma^2_{i,j,k(i,j)} + \gamma^2_{i,j,k(i,j,l)+1} 
    \leq N \norm[2]{u}^2 (2n +2)
\end{align}
where it was used that $\abs{\gamma_{i,j,k}} \leq 1$ and $\gamma_{i,j,k} \neq 0 $ only for two $\tilde k =  k(i,j,l)$ and $\tilde k +1$. 
    Also for fixed $l$ and $k_0$ the set $\set{(i,j)}{k(i,j,l)=k_0}$ contains at most $n + 1$ elements, since this is maximal number of points in the grid $G_h$ in a strip of width $\Delta s < h\sqrt{2}$.
\end{proof}
This implementation yields the same results as the MATLAB-function \emph{radon}, when using $2\times2$ oversampling that is defining the Radon transform on the finer grid $\Om_{h/2}$ with the same angles and number of beams and apply this function $\R_{h/2}$ to $Su \in \mathbb{R}^{2n \times 2m}$ given by $(Su)_{i,j} = \frac{1}{4}u( \lfloor \frac{i -1}{2} ) \rfloor +1, \lfloor \frac{j-1}{2} +1 \rfloor)$, this helps with disretization errors.
This discretized Radon transform can easily be implemented as a linear operator without explicitly calculating the corresponding matrix.
Additionally it can be parallelized to as a GPU function, one strategy for doing so which was implemented and used here can be found in Appendix~\ref{gpu_radon}, a much more elaborate implementation can be found in the Astra Toolbox~\cite{astragpu}, sadly its implementation of the back-projection is not numerically adjoint to the discrete Radon transform.

\section{The discrete problem}
To summarize the involved spaces and operators let an image $u \in X = \mathbb{R}^{n \times n}$ be mapped by $\nabh \colon X \to V $ to $V = \mathbb{R}^{n \times n \times 2}$ where the semi-norm $\norm[1]{\nabh u }$ serves as a discretization of $\TV(u)$.
The linear operator $\Rh \colon X \to Z$ with $ Z = \mathbb{R}^{N \times M}$ maps $u$ to a sinogram $\Rh u$ which is compared to given data $\unull \in Z$ by the 2-norm on $Z$.
Putting these things together gives the discretized version of the unconstrained problem:
\begin{lemma} \label{lem_ex_unconst_h}
The minimization problem
\begin{equation*}  \label{opt_unconst_h}
  \min_{ u \in X} \quad \frac{1}{2} \norm[2]{ \Rh u - \unull}^2 + \lambda \norm[1]{ \nabh u} \tag{$P_h$}.
\end{equation*}
has a solution.
\end{lemma}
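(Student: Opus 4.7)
The plan is to apply the direct method from Lemma~\ref{lem_direct_method}, viewing $X = \mathbb{R}^{n \times n}$ as a finite-dimensional (hence reflexive) Hilbert space with the inner product defined above. Setting $\Phi(u) = \tfrac{1}{2}\norm[2]{\Rh u - \unull}^2$ and $\Psi(u) = \norm[1]{\nabh u}$, the combined functional $F = \Phi + \lambda \Psi$ is finite everywhere (so proper), convex as a sum of convex functions, and continuous (hence weakly \lsc{}), because $\Rh$ and $\nabh$ are linear maps between finite-dimensional spaces and the norms $\norm[2]{\cdot}$, $\norm[1]{\cdot}$ are continuous. It is also bounded from below by $0$. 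The only nontrivial point is radial unboundedness.

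To establish coercivity I would mirror the argument of Lemma~\ref{lem_combined_term_coercive}. Let $P \colon X \to X$ denote the projection onto constant images, $Pu = \tfrac{1}{n^2}\sum_{i,j} u_{i,j} \cdot \mathds{1}$, and write $u = (u - Pu) + Pu$. Given a sequence $(u_n)$ with $\norm[2]{u_n} \to \infty$, either $\norm[2]{u_n - Pu_n} \to \infty$ along a subsequence, or $(u_n - Pu_n)$ stays bounded and then $\norm[2]{Pu_n} \to \infty$.

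The key step is a discrete Poincar\'{e}-Wirtinger type inequality: there exists $c > 0$ such that $\norm[2]{u - Pu} \leq c\, \norm[1]{\nabh u}$ for all $u \in X$. This follows from the fact that $\nabh u = 0$ if and only if $u$ is constant (noted after the definition of $\divh$), so on the finite-dimensional quotient $X/\ker(\nabh) = X/(\mathbb{R}\mathds{1})$ both $u \mapsto \norm[2]{u - Pu}$ and $u \mapsto \norm[1]{\nabh u}$ are norms, which must be equivalent. Thus in the first case $\Psi(u_n) \to \infty$, so $F(u_n) \to \infty$. In the second case, write $Pu_n = c_n \mathds{1}$ with $\abs{c_n} \to \infty$. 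Since $\Rh \mathds{1} \neq 0$, one has $\norm[2]{\Rh Pu_n} = \abs{c_n}\norm[2]{\Rh \mathds{1}} \to \infty$, and the reverse triangle inequality
\[
\norm[2]{\Rh u_n - \unull} \geq \norm[2]{\Rh Pu_n} - \norm[2]{\Rh(u_n - Pu_n)} - \norm[2]{\unull}
\]
together with boundedness of $\norm[2]{u_n - Pu_n}$ and continuity of $\Rh$ gives $\Phi(u_n) \to \infty$. In either case $F(u_n) \to \infty$.

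The hardest point will be justifying the discrete Poincar\'{e}-Wirtinger inequality, but in finite dimensions this reduces to a norm-equivalence argument on the quotient space $X/(\mathbb{R}\mathds{1})$ rather than the more delicate continuous embedding used in Lemma~\ref{lem_PoinWirt_TV}. Once coercivity is in hand, Lemma~\ref{lem_direct_method} yields a minimizer of~\eqref{opt_unconst_h} directly.
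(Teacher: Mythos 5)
Your proposal is correct and follows essentially the same route as the paper: direct method plus coercivity via the splitting $u = (u - Pu) + Pu$, the fact that $\Rh \mathds{1} \neq 0$, and a discrete Poincar\'{e}-type inequality $\norm[2]{u - Pu} \leq c \norm[1]{\nabh u}$. The only cosmetic difference is how that inequality is justified: you invoke equivalence of norms on the finite-dimensional quotient $X/(\mathbb{R}\mathds{1})$, while the paper bounds $\norm[2]{u-\bar u}/\norm[1]{\nabh u}$ by a compactness argument on the unit sphere intersected with the range of the mean-free projection -- two phrasings of the same finite-dimensional fact.
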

\begin{proof}
The problem is of the form $\min_{u \in X} \Phi(u) + \lambda \Psi(u)$ where both $\Phi(u) = \frac{1}{2} \norm[2]{ \Rh u - \unull}^2$ and $\Psi(u) =  \norm[1]{ \nabh u}$ are continuous and convex.
As noted the operator $\Rh$ does not map constant vectors to zero.
In this discrete setting a Poincar\'{e} type inequality holds $ \norm[2]{u - \bar u} \leq C \norm[1]{\nabh u}$, for some $C>0$, where $\bar u = \frac{1}{n m} \sum_{i,j} u_{i,j}$ is the mean value of $u$.
In order to see this define the function $f(u) = \frac{\norm[2]{ u - \bar u}}{\norm[1]{\nabh u}}$, for $\norm[1]{\nabh u} \neq 0$ and show that $f$ is bounded.
Since scaling $u$ does not change the value of $f$  only consider $u \in S_1 = \set{u \in X}{\norm[2]{u} = 1}$  and also since $\nabh u = 0$ if and only if $ u = \bar u$ if and only if  $Q u = u- \bar u = 0$ where $Q$ is a projection it is sufficient to find a bound for $f$ on the subspace $W = \range(Q) \cap S_1$.
Since $W$ is compact then $\inf_{W} \norm[1]{\nabh u } = c > 0$ thus $f$ is bounded on $W$.
With this the same argument as in the continuous case in Lemma~\ref{lem_combined_term_coercive} can be made to show that the combined term in radially unbounded and thus a minimizer exists by the direct method.
Or indeed as the functional in the finite-dimensional case is continuous radially unboundedness gives the existence of a minimizer directly.
\end{proof}
Because $\norm[2]{\cdot}$ is strictly convex the minimizer is unique if $\Rh$ is injective, which in turn depends on the dimensions of the involved spaces.

Now the constrained problem is discussed.
Let 
\[
    \Dm =  \set{(l,k)}{1\leq l \leq N, 1\leq k \leq M}
\]
be the domain of the discrete sinogram and $\Dm_0 \subset \Dm$ be the indices associated with pixels in the sinogram domain that are considered to be affected by metal.
As in the continuous case a solution $u$ will be required to have projection data which in $\Dm_0$ will be greater or equal than some given $C>0$,
this is written as $\Rh u \mid_{\Dm_0} \; \geq C$ meaning 
\[
    u \in  F = \set{v \in X}{(\Rh x)_{i,j} \geq C, \text{ for all } (i,j) \in \Dm_0}.
\]
The feasible set $F$ is convex and closed, therefore $\indicator_{F}$ is convex and \lsc{}.

To compare the projection data to the given data $u_0$ it makes sense to only compare it outside of  $\Dm_0$ where an inequality constraint is already active thus the discrepancy term
\[
    \frac{1}{2} \norm[L^2(\Dm_h \setminus \Dm_0)]{\Rh u - \unull}^2 = \frac{1}{2}\sum_{(i,j) \in \Dm_h \setminus \Dm_0} ( (\Rh u)_{i,j} - \unull_{i,j} ) ^2 
\]
is used which is a convex and continuous function on $Z$.
Then $\phi(u) =  \frac{1}{2} \norm[2]{ \Rh u - \unull}^2 +  \indicator_{ \{\Rh u\mid_{\Dm_0} \geq C\}}(u)$ is a continuous and convex function and the same arguments as in Lemma~\ref{lem_ex_unconst_h} show that there is a solution to the constrained problem
\begin{equation} \label{opt_constr_h}
  \min_{ u \in X} \quad \frac{1}{2} \norm[L^2(\Dm \setminus \Dm_0)]{ \Rh u - \unull}^2 + \lambda \norm[1]{ \nabh u} + \indicator_{ \{\Rh u\mid_{\Dm_0} \geq C\}}(u). \tag{$C_h$}
\end{equation}

\section{Numerical optimization}
For the application of numerical algorithms to the problems~\eqref{opt_unconst_h} and~\eqref{opt_constr_h} only the constrained problem is considered as the unconstrained problem as a special case with $\Dm_0 = \emptyset$.
The optimization problem \eqref{opt_constr_h} is of the form
\footnote{Note different notations: In~\cite{KB} what is $G^\ast$ here is called $G$ and vice versa, whereas in~\cite{Pock_dual} $F$ and $G$ are interchanged. This thesis sticks with the notation of~\cite{Precond}.}
\begin{equation} \label{opt_abst}
\min_{u\in X} F(u) + G(Ku),
\end{equation}
with $F \colon X \to \extR$ defined by $F=0$ which is clearly  convex, lower semi-continuous and proper and $K \colon X \to Y$ linear and continuous, where $Y = V \times Z$ and $G \colon Y \to \extR$ defined as
\begin{equation}
    K =
    \begin{bmatrix}
    \Rh \\ \nabh
    \end{bmatrix},
    \qquad
    G(x,y) = \frac{1}{2}\norm[L^2(\Dm \setminus \Dm_0)]{x - \unull}^2
    +  \indicator_{ \{x\mid_{\Dm_0} \geq C \}}(x) +
    \lambda \norm[1]{y}.
\end{equation}


The problem satisfies the sufficient conditions for the Fenchel-Rockafellar duality~\cite{KB}.
The dual problem reads as
\begin{equation}
\max_{w \in Y^\ast} -F^\ast(-K^\ast w) - G^\ast(w).
\end{equation}
Solving the primal and dual problem simultaneously can be interpreted as finding the saddle-point of the function $L \colon \dom(F) \times \dom(G^\ast) \to \mathbb{R}$ defined by
\begin{equation} \label{L_saddle}
L(u,w) = \scp{w}{Ku} + F(u) - G^\ast(w).
\end{equation}
A numerical algorithm for solving this problem is given by the Chambolle-Pock algorithm \cite{Pock_dual}
\begin{equation} \label{alg_abst_CP}
    \begin{cases}
    w^{k+1} = (\id + \tau   \partial G^\ast)^{-1}(w^{k} + \tau   K \bar{u}^k )   \\
    u^{k+1} = (\id + \sigma \partial F)     ^{-1}(u^{k} - \sigma K^\ast w^{k+1}) \\
    \bar{u}^{n+1} = 2u^{k+1} - u^k
    \end{cases}
\end{equation}
which converges to a saddle point of \eqref{L_saddle} for any parameter choice $\sigma, \tau >0$ satisfying  $\sigma \tau \norm{K}^2 < 1$.

Thus, it is still needed to calculate the resolvents, for $F = 0$ the resolvent clearly is $ (\id + \sigma \partial F)^{-1} = \id$.
Now the resolvent of $G^\ast$ calculated, first it is shown that taking the dual and the resolvent can be done componentwise and pointwise.

\begin{lemma} \label{lem_loc}
  Let $F \colon \mathbb{R}^r \to \mathbb{R}$ be given by $F(v) = \sum_{i= 1}^{r} f_i(v_i)$, with $f_i$ proper, convex and \lsc{}  then the dual of $F$ is given by
\begin{equation}
F^\ast(w) = \sum_{i=1}^r f_i^\ast(w_i).
\end{equation}
Additionally for $\sigma >0$
\begin{equation}
(\id + \sigma \partial F^\ast)^{-1}(u) = \ndel{(\id + \sigma \partial f_i^\ast)^{-1}(u_i)}_{i=1}^r.
\end{equation}
\end{lemma}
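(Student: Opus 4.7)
The plan is to establish the two formulas in sequence: first the conjugate identity by a direct separability argument on the supremum, then the resolvent identity by invoking the variational characterization of the resolvent recalled just before the lemma.

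For the first claim I would start from the definition $F^\ast(w) = \sup_{v \in \mathbb{R}^r} \scp{w}{v} - F(v)$ and plug in the separable form of $F$ to rewrite the objective as $\sum_{i=1}^r \bigl( w_i v_i - f_i(v_i) \bigr)$. The key observation is that the components $v_1,\dots,v_r$ vary independently over $\mathbb{R}$, so the supremum of a sum of univariate terms factorizes into the sum of the componentwise suprema. I would make this precise by proving both inequalities: the bound $F^\ast(w) \leq \sum_i f_i^\ast(w_i)$ is immediate because for every $v$ each summand is dominated by $f_i^\ast(w_i)$, while for the reverse inequality I pick, in the case that all $f_i^\ast(w_i)$ are finite, $\varepsilon$-maximizers $v_i^\varepsilon$ for each $f_i^\ast(w_i)$ and assemble $v^\varepsilon = (v_1^\varepsilon,\dots,v_r^\varepsilon)$ to get $F^\ast(w) \geq \sum_i f_i^\ast(w_i) - r\varepsilon$; the edge case where some $f_i^\ast(w_i) = +\infty$ reduces trivially to $\infty = \infty$ on both sides since then one can drive a single coordinate to increase $\scp{w}{v} - F(v)$ without bound.

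For the resolvent formula I would use the characterization given in the preceding subsection: $(\id + \sigma \partial F^\ast)^{-1}(u)$ is the unique minimizer of the strictly convex, proper, lower semi-continuous functional $v \mapsto \tfrac{1}{2}\norm[2]{v - u}^2 + \sigma F^\ast(v)$. Properness, convexity and lower semi-continuity of $F^\ast$ follow from the corresponding properties of each $f_i^\ast$ together with the representation just derived. Substituting that representation yields the separable objective $\sum_{i=1}^r \bigl[ \tfrac{1}{2}(v_i - u_i)^2 + \sigma f_i^\ast(v_i) \bigr]$, and since each summand depends only on the coordinate $v_i$, the minimizer is obtained by minimizing summand by summand. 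By the same variational characterization applied to $f_i^\ast$ in place of $F^\ast$, the $i$-th component of the minimizer equals $(\id + \sigma \partial f_i^\ast)^{-1}(u_i)$, which is exactly the asserted formula.

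The argument is essentially bookkeeping; the only subtle point is justifying the exchange of the supremum and the sum in the conjugate calculation, which is where I would be careful to treat the case of infinite values explicitly. Once the conjugate identity is in hand, the resolvent statement follows cleanly from separability of a sum of strictly convex coordinate functionals, so no further obstacles are expected.
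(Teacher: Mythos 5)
Your proposal is correct and follows essentially the same route as the paper: split the supremum over the separable sum into componentwise suprema to get $F^\ast(w)=\sum_i f_i^\ast(w_i)$, then use the variational characterization of the resolvent so that the minimization decouples coordinate by coordinate. Your extra care with the sup/sum interchange ($\varepsilon$-maximizers and the infinite-value case) is a welcome refinement of a step the paper simply asserts, but it is the same argument.
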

\begin{proof}
The dual of $F$ is given by
\begin{equation*}
F^\ast(w) =  \sup_{v \in \mathbb{R}^r} \sum_{i=1}^r v_i w_i - \sum_{i=1}^r  f_i(v_i)
= \sum_{i=1}^r \sup_{v_i \in \mathbb{R}} v_i w_i - f_i(v_i) = \sum_{i=1}^r f_i^\ast(w_i).
\end{equation*}
thus it holds that
\begin{align*}
(\id + \sigma \partial F^\ast)^{-1}(u) = v   \Leftrightarrow
u \in \argmin_{x \in \mathbb{R}^r} \frac{1}{2} \norm{x-v}^2 + \sigma F^\ast(x) \\ \Leftrightarrow
\forall i  \, u_i \in \argmin_{x \in \mathbb{R}} \frac{1}{2} (x-v_i)^2 + \sigma f_i^\ast(x)  \Leftrightarrow
\forall i  \, u_i \in (\id + \sigma \partial  f_i^\ast)(v_i).
\end{align*}
\end{proof}

Because of the first part of  Lemma~\ref{lem_loc}, that the dual $G^\ast$ of $G$ can be calculated componentwise, \ie $G^\ast (x,y) = G^\ast_1(x) + G^\ast_2(y)$, because of the second part of Lemma \ref{lem_loc} the resolvent of $G^\ast$ can be evaluated componentwise, \ie
\begin{equation}
(\id + \tau \partial G^\ast)^{-1}(\bar v, \bar w) =
    \begin{pmatrix}
    (\id + \tau \partial G^\ast_1)^{-1}(\bar v) \\
    (\id + \tau \partial G^\ast_2)^{-1}(\bar w)
    \end{pmatrix}.
\end{equation}

The dual of $G_2(y) = \lambda \norm[1]{y}$ is $G^\ast_2(\eta) = \indicator_{B_\lambda^\infty(0)}(\eta)$, where ${B_\lambda^\infty(0)}$  denotes the norm ball of radius $\lambda$ around the origin in the maximum norm, consequently the resolvent of $G_2$ is the projection onto ${B_\lambda^\infty(0)}$:
\begin{equation}
\ndel{(\id + \tau \partial G^\ast_2)^{-1}(\bar w)}_{i,j}
 = \ndel{ \mathcal{P}_{B_\lambda^\infty(0)}(\bar w) }_{i,j}
 = \frac{\bar w_{i,j}}{ \max \{ 1, \abs{ \bar w_{i,j} } / \lambda \} }.
\end{equation}

Now the dual of $G_1$ and its resolvent are calculated. It makes sense to only try to fit the data where it is considered to be valid thus the discrepancy term $\norm[2]{\cdot - \unull} = \norm[L^2(\Dm \setminus \Dm_0)]{ \cdot - \unull} $ is used.
    Also the inequality constraints can be written pointwise $\indicator_{x\mid_{\Dm_0 \geq C }} =  \sum_{(i,j) \in \Dm_0} \indicator_{ \{x \geq C\} }(x_{i,j})$.
Thus, the fidelity term is
\begin{equation*}
  G_1(x) = \frac{1}{2}\sum_{(i,j)\in \Dm \setminus \Dm_0} (x_{i,j}-\unull_{i,j})^2
+ \sum_{(i,j) \in \Dm_0} \indicator_{ \{x \geq C\} }(x_{i,j}).
\end{equation*}
With the pointwise functions
\begin{equation*}
g_{1,1}(z) =  \frac{1}{2}(z-\unull)^2, \quad g_{1,2}(z) = \indicator_{ \{z \geq C\} }(z)
\end{equation*}
the fidelity term will have the dual
\begin{equation}
G^\ast_1(\xi) = \sum_{(i,j) \in \Dm \setminus \Dm_0 } g_{1,1}^\ast (\xi_{i,j}) +  \sum_{(i,j) \in  \Dm_0 } g_{1,2}^\ast (\xi_{i,j}).
\end{equation}

\begin{lemma} \label{lem_f_ast1}
The function $g_{1,1} \colon \mathbb{R} \to \mathbb{R} $, $ z \mapsto \frac{1}{2}(z-\unull)^2$ has the dual
\begin{equation}
g_{1,1}^\ast (\xi) = \frac{1}{2}\xi^2 + \xi \unull
\end{equation}
and the function $g_{1,2} \colon \mathbb{R} \to \mathbb{R}_{\infty} $, $ z \mapsto \indicator_{ \{x \geq C\} }(z)$ has the dual
\begin{equation}
 g_{1,2}^\ast (\xi) =
 \begin{cases}
    \infty, &\text{if } \xi >    0\\
    \xi C,  &\text{if } \xi \leq 0.
 \end{cases}
\end{equation}
\end{lemma}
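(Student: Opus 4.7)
The plan is to compute both convex conjugates directly from the definition $f^\ast(\xi) = \sup_{z \in \mathbb{R}} \xi z - f(z)$ given earlier, since both functions are of a single real variable and the suprema are elementary.

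For $g_{1,1}$, I would observe that $z \mapsto \xi z - \tfrac{1}{2}(z-\unull)^2$ is a strictly concave smooth function of $z$, so its supremum is attained where the derivative vanishes. Differentiating gives $\xi - (z - \unull) = 0$, hence $z^\ast = \xi + \unull$. Substituting back yields
\begin{equation*}
g_{1,1}^\ast(\xi) = \xi(\xi + \unull) - \tfrac{1}{2}\xi^2 = \tfrac{1}{2}\xi^2 + \xi \unull,
\end{equation*}
as claimed.

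For $g_{1,2}$, the supremum defining the conjugate reduces to a constrained one because $g_{1,2}(z) = \infty$ off $[C, \infty)$, so
\begin{equation*}
g_{1,2}^\ast(\xi) = \sup_{z \geq C} \xi z.
\end{equation*}
Then I would split into cases on the sign of $\xi$. If $\xi > 0$, letting $z \to \infty$ shows the supremum is $+\infty$. If $\xi = 0$, the supremum is $0 = \xi C$. If $\xi < 0$, the map $z \mapsto \xi z$ is strictly decreasing, so the supremum over $z \geq C$ is attained at $z = C$, giving $\xi C$. Combining the last two cases covers $\xi \leq 0$.

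There is no real obstacle here: both computations are textbook applications of the Fenchel conjugate definition, and the only small care needed is the case distinction on the sign of $\xi$ in the second part to identify whether the supremum over the half-line $[C,\infty)$ is finite.
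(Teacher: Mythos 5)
Your proposal is correct and follows essentially the same route as the paper: stationarity of the concave objective for $g_{1,1}$ (giving $z^\ast = \xi + \unull$), and a case distinction on the sign of $\xi$ for the supremum of $\xi z$ over $[C,\infty)$ for $g_{1,2}$. The only cosmetic difference is that you treat $\xi = 0$ as a separate case where the paper handles $\xi \leq 0$ in one stroke; both are fine.
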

\begin{proof}
For $ g_{1,1}$ the dual is given by
\begin{equation*}
g_{1,1}^\ast(\xi)
= \sup_{z \in \mathbb{R}} \scp{\xi}{z} - g_{1,1}(z)
= \sup_{z \in \mathbb{R}} \scp{\xi}{z} - \frac{1}{2}(z-\unull)^2.
\end{equation*}
Let $h(z) = \scp{\xi}{z} - \frac{1}{2}(z-\unull)^2$ for any $\xi$ the function $h$ is differentiable with $h'(z) = \xi - (z-\unull)$ and $h''(z) = -1$, thus $h(z)$ will reach its maximum at $ z = \xi + \unull$.
Plugging this back in gives
$g_{1,1}^\ast(\xi) = \xi ( \xi + \unull) - \frac{1}{2} \xi^2 = \frac{1}{2} \xi^2 + \xi \unull$ as claimed.
For $g_{1,2}^\ast$ the dual is
\begin{equation*}
g_{1,2}^\ast(\xi)
= \sup_{z \in \mathbb{R}} \scp{\xi}{z} - g_{1,2}(z)
= \sup_{z \in \mathbb{R}} \scp{\xi}{z} - \indicator_{ \{x \geq C\} }(z).
\end{equation*}
For $\xi > 0 $  increasing $z$ beyond $C$ the indicator function vanishes and $\lim_{z \to \infty} \scp{\xi}{z} = \infty$,  thus for $\xi>0$ it holds that $g_{1,2}^\ast(\xi) = \infty$.
For $\xi \leq 0 $ the term $\scp{\xi}{z}$ is maximized by choosing $z$ as small as as possible while keeping $g_{1,2}(z)$ finite this is the case at $z = C $, thus the supremum is $g_{1,2}^\ast(\xi) = \xi C$.
\end{proof}

\begin{lemma} \label{lem_f_ast_resolv1}
For the functions $g^\ast_{1,1}$ and $g^\ast_{1,2}$ from Lemma \ref{lem_f_ast1} the resolvents are
\begin{align}
(\id + \tau \partial g_{1,1}^\ast)^{-1}v &=  \frac{v - \tau \unull}{1+\tau} \\
(\id + \tau \partial g_{1,2}^\ast)^{-1}v &= \min \{  v - \tau C, 0 \}.
\end{align}
\end{lemma}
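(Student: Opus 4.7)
The plan is to use the characterization of the resolvent as a proximal map: by definition $(\id + \tau \partial g^\ast)^{-1}(v)$ is the unique minimizer $w$ of $\frac{1}{2}(w-v)^2 + \tau g^\ast(w)$, equivalently the unique $w$ satisfying $v \in w + \tau \partial g^\ast(w)$. For both functions from Lemma~\ref{lem_f_ast1} this leads to a very short computation, one by direct differentiation and one by a case distinction that encodes the barrier coming from $+\infty$.

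For $g_{1,1}^\ast(\xi) = \tfrac{1}{2}\xi^2 + \xi \unull$, I would simply note that $g_{1,1}^\ast$ is differentiable and strongly convex with $\partial g_{1,1}^\ast(w) = \{w + \unull\}$. The optimality condition therefore reads $v = w + \tau(w + \unull)$, which is a scalar linear equation. Solving for $w$ yields $w = (v - \tau \unull)/(1+\tau)$, proving the first claim.

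For $g_{1,2}^\ast$ the analysis is slightly more delicate because $g_{1,2}^\ast(\xi) = \infty$ for $\xi > 0$. Since the proximal objective is $+\infty$ whenever $w > 0$, the minimizer must satisfy $w \leq 0$, and we may replace $g_{1,2}^\ast(w)$ by $wC$ on the feasible set. The problem then reduces to minimizing the smooth quadratic $\tfrac{1}{2}(w-v)^2 + \tau wC$ subject to $w \leq 0$. Its unconstrained minimizer is $w = v - \tau C$. I would distinguish the two cases: if $v - \tau C \leq 0$, this point is feasible and hence optimal; otherwise the objective is monotonically increasing on $(-\infty,0]$ up to the boundary, so the minimum is attained at $w = 0$. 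Combining both cases gives the compact formula $w = \min\{v - \tau C,\,0\}$, as claimed.

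The only subtle step is the second one, and the subtlety is purely notational rather than technical: one has to be careful that the $\infty$-barrier really does force $w \leq 0$ and that the subdifferential of $g_{1,2}^\ast$ at $w = 0$ contains an entire half-line, which is what allows the boundary case $w=0$ to satisfy $v \in w + \tau \partial g_{1,2}^\ast(w)$ even when $v - \tau C > 0$. Once this is observed, no further calculation beyond elementary one-dimensional optimization is needed.
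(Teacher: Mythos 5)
Your proposal is correct and takes essentially the same route as the paper: for $g^\ast_{1,1}$ differentiate and solve the linear optimality condition, and for $g^\ast_{1,2}$ perform the same elementary one-dimensional case distinction (the paper writes it as the inclusion $v \in \xi + \tau \partial g^\ast_{1,2}(\xi)$ with $\partial g^\ast_{1,2}(0) \supset [C,\infty)$, you as the constrained prox minimization, which is equivalent). Only a wording slip: when $v - \tau C > 0$ the objective $\tfrac{1}{2}(w-v)^2 + \tau w C$ is \emph{decreasing} on $(-\infty,0]$ (its vertex $v-\tau C$ lies to the right of $0$), which is precisely why the minimum is attained at the boundary $w=0$; your conclusion is nonetheless correct.
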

\begin{proof}
As $g^\ast_{1,1}$ is differentiable it holds that
\begin{equation}
    \partial g^\ast_{1,1}(\xi) = \sett{(g^\ast_{1,1})'(\xi)} = \sett{\xi + \unull}
\end{equation}
and thus
\begin{equation}
(\id + \tau \partial g_{1,1}^\ast)^{-1}v = \xi
\Leftrightarrow v \in \xi + \tau \partial g^\ast_{1,1}(\xi)
= \xi + \tau ( \xi + \unull)
\Leftrightarrow \xi = \frac{v - \tau \unull }{1+ \tau}.
\end{equation}
For $ g^\ast_{1,2}$ the resolvent is
\begin{equation}
(\id + \tau \partial g_{1,2}^\ast)^{-1}v = \xi
\Leftrightarrow v \in \xi + \tau \partial g^\ast_{1,2}(\xi)
=
\begin{cases}
    \xi + \tau C,    &\text{if } \xi < 0 \\
    [\tau C,\infty), &\text{if } \xi = 0 \\
    \emptyset,       &\text{if } \xi > 0.
\end{cases}
\end{equation}
Thus, if $v \geq \tau C $, the first case cannot be true so $\xi = 0$ must hold,
whereas if $ v < \tau C $ the first case must apply and so $ \xi = v - \tau C$.
Terefore, $ \xi = \min \{ v - \tau C, 0 \}$.
\end{proof}
Putting the last lemmas together gives the resolvent of $G_1^\ast$
\begin{equation}
(\id + \tau \partial G^\ast_1)^{-1}(\bar v_{i,j}) =
    \begin{cases}
    \frac{\bar v_{i,j}-\tau (\unull)_{i,j}}{1+\tau},        &\text{if } (i,j) \in \Dm    \setminus \Dm_0   \\
\min \{ \bar v_{i,j} - \tau C, 0 \},                 &\text{if } (i,j) \in  \Dm_0 .
    \end{cases}.
\end{equation}

\section{Chambolle-Pock algorithm}
Now everything can be plugged together into the abstract Chambolle-Pock iteration
\begin{equation} \label{alg_abst_CP2}
    \begin{cases}
    y^{k+1} = (\id + \tau   \partial G^\ast)^{-1}(y^{k} + \tau   K \bar{u}^k )   \\
    u^{k+1} = (\id + \sigma \partial F)     ^{-1}(u^{k} - \sigma K^\ast y^{k+1}) \\
    \bar{u}^{n+1} = 2u^{k+1} - u^k.
    \end{cases}
\end{equation}
Using the operator
$K = \begin{bmatrix} \Rh, \nabh \end{bmatrix}^T $ calling the two components $y = (v,w) $ and as a shorthand use $\bar y = (\bar v, \bar w) = y + \tau K \bar u$.
Then the just calculated resolvents will directly apply to $\bar v$ and $\bar w$.
Also then $K^\ast y =  \Rh^\ast v - \divh w$.
Since $\norm{K}^2 = \norm{\nabh }^2 + \norm{\Rh}^2 $ and $\norm{\nabh}^2 < \frac{8}{h^2}$  the condition $\sigma \tau \norm{K}^2 <1$ will be satisfied if $\sigma \tau < ( \norm{\Rh}^2 + \frac{8}{h^2})^{-1}$.
Then the abstract Chambolle-Pock iteration~\eqref{alg_abst_CP2} yields Algorithm~\ref{alg_CP}.
\begin{algorithm}
\caption{Chambolle-Pock algorithm for minimizing \eqref{opt_constr_h}}
\begin{algorithmic} \label{alg_CP}
\REQUIRE  \textbf{Input}:  $\unull \in \mathbb{R}^{N \times M},\, \Dm_0,\, C >0,\, h>0 $
\REQUIRE \textbf{Parameters}: $\sigma >0, \, \tau >0, \text{with }\sigma \tau < (\norm{\Rh}^2 + \frac{8}{h^2})^{-1}$
\REQUIRE \textbf{Initial values}:$\bar u$
\FOR{$k < k_{max}$}
\STATE $\bar w \leftarrow w + \tau\nabh \bar u$
\STATE $\bar v \leftarrow v + \tau \Rh \bar u$
\STATE $w_{i,j} \leftarrow  \frac{\bar w_{i,j}}{ \max \{ 1, \abs{ \bar w_{i,j} } / \lambda \} }$
\STATE $v_{i,j} \leftarrow
\begin{cases}
\min \{ \bar v_{i,j} - \tau C, 0 \}, &\text{if } (i,j) \in \Dm_0  \\
\ndel{\bar v_{i,j} - \tau (\unull)_{i,j}}/(1+\tau), &\text{else }
\end{cases}
$
\STATE $ u_{new} \leftarrow u + \sigma \divh(w) -\sigma \Rh^\ast(v) $
\STATE $ \bar u \leftarrow 2 u_{new} - u$
\STATE $u \leftarrow u_{new}$
\ENDFOR
\end{algorithmic}
\end{algorithm}

\section{Variants and Extensions}
\addtocontents{toc}{\protect\setcounter{tocdepth}{1}}
In this section some variants of the problems are discussed.
One simple variant is to let the threshold $C$ be dependent of the on the location in the sinogram, since the resolvents are calculated pointwise all above resolvents can be modified with $C = C_{i,j}$.
\subsection{Weighted fitting}
It makes sense to introduce weights to the data fidelity term $\norm{ \Rh u - \unull}^2$ indeed~\cite{ramani2012splitting} underlines its importance.
Assigning a positive weight $\omega_{i,j}$ to each pixel
By slightly modifying the previous lemma the new resolvent is calculated.
\begin{lemma} \label{lem_f_ast1_w}
The function $g_{1,1} \colon \mathbb{R} \to \mathbb{R} $, $ z \mapsto \frac{1}{2} \omega (z-\unull)^2$ has the dual
\begin{equation}
    g_{1,1}^\ast (\xi) = \frac{1}{2} \frac{1}{\omega}\xi^2 + \xi \unull
\end{equation}
and thus for $\tau >0 $ the corresponding resolvent is:
\begin{equation}
     (\id + \tau \partial g_{1,1}^\ast)^{-1}v =  \frac{v - \tau \unull }{1+  \frac{1}{\omega}\tau}.
\end{equation}
\end{lemma}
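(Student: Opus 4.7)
The plan is to follow exactly the same two-step pattern as in Lemma~\ref{lem_f_ast1} and Lemma~\ref{lem_f_ast_resolv1}, simply carrying the weight $\omega$ through the calculation. The statement is really the weighted analogue of the $\omega=1$ case already proved, so nothing new is needed conceptually.

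First I would compute the conjugate directly from the definition. Setting $h(z) = \xi z - \tfrac{1}{2}\omega(z-\unull)^2$, the map $h$ is smooth and strictly concave in $z$ since $h''(z) = -\omega < 0$, so the supremum is attained at the unique critical point. Solving $h'(z) = \xi - \omega(z-\unull) = 0$ gives $z^\ast = \unull + \xi/\omega$, and plugging back in yields
\begin{equation*}
g_{1,1}^\ast(\xi) = \xi\bigl(\unull + \tfrac{\xi}{\omega}\bigr) - \tfrac{1}{2}\omega \bigl(\tfrac{\xi}{\omega}\bigr)^2 = \tfrac{1}{2\omega}\xi^2 + \xi \unull,
\end{equation*}
as claimed. (For $\omega = 1$ this specializes to the formula in Lemma~\ref{lem_f_ast1}.)

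Second, I would compute the resolvent. Since $g_{1,1}^\ast$ is differentiable, $\partial g_{1,1}^\ast(\xi) = \{\xi/\omega + \unull\}$. The equation defining the resolvent, $v \in \xi + \tau \partial g_{1,1}^\ast(\xi)$, reduces to the affine equation $v = \xi + \tau(\xi/\omega + \unull)$, i.e.\ $\xi (1 + \tau/\omega) = v - \tau \unull$, which gives the stated formula $\xi = (v - \tau \unull)/(1 + \tau/\omega)$.

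There is no real obstacle here; the only mild point to check is that the quadratic in $z$ is strictly concave (which it is as long as $\omega > 0$, and this is implicitly assumed since $\omega$ plays the role of a weight). Everything else is a direct copy of the previous proof with $\omega$ inserted in the obvious places, and consistency with Lemma~\ref{lem_f_ast_resolv1} at $\omega = 1$ serves as an immediate sanity check.
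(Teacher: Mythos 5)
Your proposal is correct and follows essentially the same route as the paper: compute the conjugate by maximizing the strictly concave map $z \mapsto \xi z - \tfrac{\omega}{2}(z-\unull)^2$ at its critical point $z^\ast = \unull + \xi/\omega$, then use differentiability of $g_{1,1}^\ast$ to reduce the resolvent equation to the affine equation $v = \xi + \tau(\xi/\omega + \unull)$. The computations and the resulting formulas match the paper's proof exactly.
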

\begin{proof}
For $ g_{1,1}$ the dual is given by
\begin{equation*}
g_{1,1}^\ast(\xi)
= \sup_{z \in \mathbb{R}} \scp{\xi}{z} - \omega \frac{1}{2}(z-\unull)^2.
\end{equation*}
    Let $h(z) = \scp{\xi}{z} - \frac{1}{2}(z-\unull)^2$ for any $\xi$ the function $h$ is differentiable with $h'(z) = \xi - \omega (z-\unull)$ and $h''(z) = -\omega <0$, thus $h(z)$ will reach its maximum at $ z = \frac{1}{\omega}\xi + \unull$.
Plugging this back in gives
$g_{1,1}^\ast(\xi) = \frac{1}{2} \frac{1}{\omega} \xi^2 + \xi \unull$ as claimed.
As $g^\ast_{1,1}$ is differentiable it holds that
\begin{equation}
    \partial g^\ast_{1,1}(\xi) = \sett{(g^\ast_{1,1})'(\xi)} = \sett{\frac{1}{\omega}\xi + \unull}
\end{equation}
and thus
\begin{equation}
(\id + \tau \partial g_{1,1}^\ast)^{-1}v = \xi
    = \xi + \tau ( \frac{1}{\omega}\xi + \unull)
    \Leftrightarrow \xi = \frac{v - \tau \unull }{1+  \frac{1}{\omega}\tau}.
\end{equation}
\end{proof}
\subsection{Ignoring data}
Since neither the region $\Dm_0$ nor the threshold $C$ is known in an application setting a heuristic strategy for finding $\Dm_0$ is presented.
The basic idea is to increase this region while ignoring its contents until the metal artifacts are removed. This is accomplished by solving the problem:
\begin{equation} \label{opt_main_ignore}
  \min_{u\in X} \quad \frac{1}{2} \norm[L^2(\Dm \setminus \Dm_0)]{ \R_h u - \unull}^2 + \lambda \norm[1]{ \nabla_h u}.
\end{equation}
Then the function $g_{1,1}$ will remain unchanged, but now $g_{1,2} = 0$, the corresponding resolvent is the $0$ as is calculated in the following lemma
\begin{lemma} \label{lem_f_ast_resolv_ignore}
The function $g_{1,2} = 0$ has the resolvent
\begin{equation*}
(\id + \tau \partial g_{1,2}^\ast)^{-1}(v) = 0
\end{equation*}
\end{lemma}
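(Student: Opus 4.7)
The plan is to compute $g_{1,2}^\ast$ explicitly from the definition and then read off its resolvent. Since $g_{1,2}\equiv 0$, the Fenchel conjugate is
\[
g_{1,2}^\ast(\xi) = \sup_{z \in \mathbb{R}} \; \xi z - 0,
\]
which equals $0$ if $\xi = 0$ and $+\infty$ otherwise. In other words, $g_{1,2}^\ast = \indicator_{\{0\}}$.

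Next I would invoke the definition of the resolvent of $\partial g_{1,2}^\ast$ at parameter $\tau > 0$, namely the unique minimizer of
\[
\min_{\xi \in \mathbb{R}} \; \tfrac{1}{2}(\xi - v)^2 + \tau \indicator_{\{0\}}(\xi).
\]
The indicator term forces $\xi = 0$ to be the only feasible point, so the minimizer is $\xi = 0$ regardless of $v$. Hence $(\id + \tau \partial g_{1,2}^\ast)^{-1}(v) = 0$, as claimed.

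There is really no obstacle here: the only subtlety is being careful that the convex conjugate of the constant zero function is the indicator of the origin (and not, say, $0$ itself), which is immediate from the supremum being attained only in the trivial direction. As a sanity check, one can also verify this via Moreau's identity (Lemma~\ref{lem_moreau_id}): since $\mathrm{prox}_{\sigma g_{1,2}}(z) = z$ for any $\sigma > 0$ when $g_{1,2}=0$, the identity yields $(\id + \tau \partial g_{1,2}^\ast)^{-1}(v) = v - v = 0$, in agreement with the direct computation.
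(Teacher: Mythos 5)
Your proposal is correct and follows essentially the same route as the paper: compute $g_{1,2}^\ast$ from the definition (obtaining the indicator of the origin) and then observe that the resolvent at any $v$ must be $0$ — the paper phrases this last step via the inclusion $v \in \xi + \tau\partial g_{1,2}^\ast(\xi)$, while you use the equivalent proximal minimization characterization, which the paper itself mentions as an alternative. The added Moreau-identity sanity check is fine, though stated a bit loosely regarding the step-size scaling; it is not needed for the argument.
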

\begin{proof}
The dual of $g_{1,2}$ is
\begin{equation*}
g_{1,2}^\ast( \xi ) = \sup_{x} \scp{\xi}{x} - 0 =
\begin{cases}
0,&\text{if } \xi = 0 \\
\infty, &\text{if } \xi \neq 0 \\
\end{cases}.
\end{equation*}
Thus, the resolvent of $ g_{1,2}^\ast$(is
\begin{equation*}
(\id + \tau \partial g_{1,2}^\ast)^{-1}(v) = \xi
\Leftrightarrow v \in \xi + \tau \partial g_{1,2}^\ast(\xi) =
\begin{cases}
\mathbb{R}, &\text{if } \xi =    0 \\
\emptyset,  &\text{if } \xi \neq 0
\end{cases},
\end{equation*}
so the solution is $\xi = 0$ for any $v$.
Or indeed one could argue that for any $v$ only $\xi = 0$ solves $\sup_{\xi} \frac{1}{2}\norm[2]{\xi - v}^2 + \tau g_{1,2}^\ast$.
\end{proof}
Therefore in this case the resolvent of $G^\ast_1$ will be
\begin{equation} \label{eq_resolv_ignore}
(\id + \tau \partial G^\ast_1)^{-1}( v_{i,j}) =
\begin{cases}
\frac{ v_{i,j}-\tau (\unull)_{i,j}}{1+\tau},    &\text{if } (i,j) \in \Dm    \setminus \Dm_0   \\
0,                 &\text{if } (i,j) \in  \Dm_0 .
\end{cases}.
\end{equation}
This can be plugged into Algorithm~\ref{alg_CP} to obtain a algorithm converging to a solution of~\eqref{opt_main_ignore}.

\subsection{Data fit everywhere} \label{sssec_Ind_p_quad}
In  trying to fit the data in all of $\Dm$, \ie $\norm[2]{\cdot - \unull} = \norm[L^2(\Dm)]{ \cdot - \unull} $ thus trying to fit the data also in $\Dm_0$ where it is also demanded that the inequality constraint $\Rh u \geq C$ holds.
Then the function $g_{1,2}$ will remain unchanged, but $g_{1,1}(x) = \frac{1}{2}(x-\unull)^2 + \indicator_{[C,\infty)}(z)$.
With this the following resolvents are obtained.
\begin{lemma} \label{lem_f_ast_other}
The function $f \colon \mathbb{R} \to \mathbb{R} $, $ z \mapsto \frac{1}{2}(z-a)^2 +
\indicator_{[C,\infty)}(z)$ has the dual
\begin{equation*}
f^\ast(y) =
\begin{cases}
\frac{1}{2} y^2 + y a,     & \text{if } y  \geq   C - a \\
yC - \frac{1}{2}(C-\unull)^2,   & \text{if } y  <      C - a,
\end{cases}
\end{equation*}
which is continuous as $f^\ast(C-a) = \frac{1}{2}( C^2 - a^2) = \lim\limits_{y \uparrow C-a} f^\ast(y).$
\end{lemma}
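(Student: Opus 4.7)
The plan is to compute $f^\ast(y)$ directly from its definition. Since $f(z) = +\infty$ whenever $z < C$, the supremum in the definition reduces to a constrained supremum over the half-line $[C,\infty)$:
\[
f^\ast(y) = \sup_{z \in \mathbb{R}} \ yz - f(z) = \sup_{z \geq C} \ yz - \tfrac{1}{2}(z-a)^2.
\]
The inner expression $h(z) = yz - \frac{1}{2}(z-a)^2$ is a strictly concave downward parabola in $z$ with unconstrained maximizer $z^\ast = y + a$, obtained from $h'(z) = y - (z-a) = 0$. The key idea is that the constraint $z \geq C$ is active precisely when $z^\ast < C$, i.e.\ when $y < C - a$.

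Next I would split into the two cases based on the sign of $y - (C - a)$. In the case $y \geq C - a$, the unconstrained optimum $z^\ast = y + a$ is feasible, and substitution yields $f^\ast(y) = y(y+a) - \tfrac{1}{2} y^2 = \tfrac{1}{2}y^2 + ya$. In the case $y < C - a$, strict concavity implies that $h$ is strictly decreasing on $[C,\infty)$ (since its global maximum lies strictly to the left of $C$), so the constrained supremum is attained at the boundary $z = C$, yielding $f^\ast(y) = yC - \tfrac{1}{2}(C-a)^2$. This directly gives the claimed formula.

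Finally, the continuity assertion at $y = C - a$ is a short algebraic check: plugging $y = C - a$ into $\tfrac{1}{2}y^2 + ya$ gives $\tfrac{1}{2}(C-a)^2 + (C-a)a$, which simplifies to $\tfrac{1}{2}(C^2 - a^2)$; similarly, $yC - \tfrac{1}{2}(C-a)^2$ at $y = C - a$ reduces to the same value, verifying that the two branches agree at the transition point. There is no real obstacle here; the only point that deserves care is justifying that the boundary value $z = C$ is indeed the constrained optimum in the second case, which follows immediately from strict concavity and the location of $z^\ast$. (I note the statement contains the symbol $\unull$ in the second branch where it should read $a$; this is a typo in the excerpt and does not affect the proof.)
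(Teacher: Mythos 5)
Your proposal is correct and follows essentially the same route as the paper: reduce the conjugate to the constrained supremum of the concave parabola $g(z)=yz-\tfrac{1}{2}(z-a)^2$ over $[C,\infty)$, locate the unconstrained maximizer $z^\ast=y+a$, and split into the cases $y\geq C-a$ and $y<C-a$ (the paper phrases the first case as comparing $g(z^\ast)$ with $g(C)$ via $\bigl(y-(C-a)\bigr)^2\geq 0$, while you argue directly from feasibility of $z^\ast$ and monotonicity, which amounts to the same thing). Your observation that $\unull$ in the second branch should read $a$ is also right; it is a typo in the statement.
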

\begin{proof}
Set  $g(z) = yz - \frac{1}{2}(z-a)^2$, then
\begin{equation*}
f^\ast(y) = \sup_{z \in \mathbb{R}} \, yz - f(z)
= \sup_{z \geq C}         g(z).
\end{equation*}
As the function $g$ is differentiable and concave its maximum in $[C,\infty)$ is $\max\{g(z^\ast),g(C)\}$ with
\begin{equation*}
0 = g'(z^\ast) = y - (z^\ast - a)  \Leftrightarrow z^\ast = y + a,
\end{equation*}
if $z^\ast \geq C$ i.\,e. $y \geq C - a$. The values are
\begin{align*}
g(C)      &= yC - \frac{1}{2}(C-a)^2. \\
g(z^\ast) &= y(y+a) - \frac{1}{2}y^2 = \frac{1}{2} + ya.
\end{align*}
Because of
\begin{equation*}
\ndel{ y- (C-a)}^2 \geq 0 \\
y^2 - 2y(C-a) + (C-a)^2 \geq 0 \\
\frac{1}{2}y^2 + ya \geq yC - \frac{1}{2}(C-a)^2,
\end{equation*}
for $ y \geq C- a  $ this is
\begin{equation*}
f^\ast(y) = \max\{g(z^\ast),g(C)\} = g(z^\ast) = \frac{1}{2}y^2 + ya.
\end{equation*}
In the case $  y < C - a $, $z^\ast $ is not in $[C,\infty)$, thus the maximum is reached at $C$, so $f^\ast(y) = g(C)$.
\end{proof}

\begin{lemma} \label{lem_f_ast_resolv_other}
For the function $f^\ast$ from Lemma~\ref{lem_f_ast_other}, the resolvent is
\begin{equation*}
(\id + \tau \partial f^\ast)^{-1}(v) =
\begin{cases}
\frac{v-\tau a}{1+\tau}, &\text{if } v \geq C(1+\tau) - a  \\
v- \tau C,                 &\text{if } v <    C(1+\tau) - a. \\
\end{cases}
\end{equation*}
\end{lemma}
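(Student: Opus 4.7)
The plan is to solve the inclusion $v \in \xi + \tau \partial f^\ast(\xi)$ directly, by exploiting the piecewise structure of $f^\ast$ recorded in Lemma~\ref{lem_f_ast_other}. First I would compute $\partial f^\ast$ on each of the two pieces: on $y > C-a$ the function $f^\ast$ is $C^1$ with derivative $y + a$, and on $y < C-a$ it is affine with derivative $C$. A quick sanity check at the breakpoint $y = C-a$ shows that both one-sided derivatives equal $C$, so $f^\ast$ is in fact $C^1$ on all of $\mathbb{R}$ (and convex, as the conjugate of a proper convex function). Consequently $\partial f^\ast(\xi)$ is a singleton everywhere and the inclusion reduces to a scalar equation.

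Next I would split into the two cases according to which branch of $(f^\ast)'$ is active. In the branch $\xi \geq C-a$ the equation becomes $v = (1+\tau)\xi + \tau a$, which yields the candidate $\xi = (v - \tau a)/(1+\tau)$; in the branch $\xi \leq C - a$ it becomes $v = \xi + \tau C$, giving $\xi = v - \tau C$. After solving, the essential check is a consistency step: translate the defining inequality $\xi \gtrless C - a$ back into a condition on $v$. A direct calculation shows both conditions simplify to the single threshold $v \gtrless (1+\tau)C - a$, and at the common value $v = (1+\tau)C-a$ both formulas agree, confirming continuity of the resolvent.

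To guarantee that these candidates really give the resolvent and that no solution is missed, I would appeal to the fact that $(\id+\tau\partial f^\ast)^{-1}$ is single-valued and everywhere defined (since $f^\ast$ is proper, convex, and lsc), so the uniquely determined value of $\xi$ constructed above is the resolvent. No serious obstacle is expected beyond careful bookkeeping at the kink $y = C-a$; the only mild subtlety is matching the regime boundary in the $\xi$-variable to the regime boundary in the $v$-variable, which is handled by the algebraic rearrangement described in the previous paragraph.
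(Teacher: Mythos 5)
Your proposal is correct and follows essentially the same route as the paper: compute the (singleton, piecewise) subdifferential of $f^\ast$, solve the scalar equation $v = \xi + \tau\,\partial f^\ast(\xi)$ on each branch, and translate the branch condition $\xi \gtrless C-a$ into the threshold $v \gtrless (1+\tau)C - a$. The extra observations you make (the one-sided derivatives matching at the kink and single-valuedness of the resolvent guaranteeing no missed solutions) are sound but only make explicit what the paper handles by checking that exactly one case is consistent for each $v$.
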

\begin{proof}
Calculating the subdifferential of $f^\ast$ yields
\begin{equation*}
\partial f^\ast(y) =
\begin{cases}
\{ y + a \}, &\text{if }   y > C - a \\
\{ C       \}, &\text{if } y < C - a \\
[C,C],         &\text{if } y = C - a,
\end{cases}
\end{equation*}
which is in particular singleton. Thus, for the resolvent
\begin{equation*}
    (\id + \tau \partial f^\ast)^{-1}(v) = u \Leftrightarrow
    v = u + \tau \partial f^\ast(u) =
    \begin{cases}
        u + \tau (u + a),  &\text{if } u \geq C - a \\
        u + \tau C,        &\text{if } u < C - a.
    \end{cases}
\end{equation*}
Let $ v\geq C(1+\tau) - a$, then the second case $ u = v - \tau C \geq C - a$ is not possible,
but $u = \frac{v - \tau a}{1+\tau} \geq \frac{C(1+\tau) - (1+\tau) a}{1+\tau} \geq C - a $ fits the first case.
Let $ v < C(1+\tau) - a$, then the first case $ u = \frac{v - \tau a}{1+\tau} < \frac{C(1+\tau) - (1+\tau) a}{1+\tau} = C - a$ is not possible,
whereas $ u = v - \tau C < C - a$ fits the second case.
\end{proof}

\subsection{Hard constraints}
Rather then try to approximate the measurements $\unull$ one can try to force the solution to be exactly $\unull$ using equality constraints.
This lead to the following problem.
\begin{equation} \label{opt_hc_h}
\begin{cases}
  \min_{ u \in X  } \quad  \norm[1]{\nabh u} \\
\text{s.t. }  (\Rh u) \mid_{\Dm_0} \geq C \quad \text{and } \Rh u \mid_{\Dm \setminus \Dm_0 } = \unull \mid_{\Dm \setminus \Dm_0 }
\end{cases}
\end{equation}
In the case of hard constraints the discrepancy term is
\[
    G_1 =  \indicator_{\{x\mid_{\Dm_0} \geq C\}} + \indicator_{\{x\mid_{\Dm \setminus \Dm_0} = \unull\}} 
\]
and the resolvent of $G^\ast_1$ is again
$(\id + \sigma \partial G_1^\ast)^{-1}(\bar v_{i,j}) = \min \{ \bar v_{i,j} - \sigma C, 0 \}$ in $\Dm_0$ and
in $\Dm \setminus \Dm_0$ it is  $(\id + \sigma \partial G_1^\ast)^{-1}(\bar v_{i,j}) = \bar v_{i,j} - \sigma (\unull)_{i,j}$.
Finally choose $\lambda>0$ arbitrarily  \eg $\lambda = 1$.
\addtocontents{toc}{\protect\setcounter{tocdepth}{3}}

\chapter{Preconditioning} \label{sec_Precond}
In this section preconditioning methods are applied to the problem, this is motivated in particular by the slow convergence of the Chambolle-Pock algorithm for the constrained problem \eqref{opt_constr_h}.
The main idea is to approximately invert $\R^\ast R$ in each step of the iteration.
A similar approach can be found in~\cite{ramani2012splitting} were the Alternating Direction Method of Multipliers(\myacro{ADMM}) is used with a preconditioned conjugate gradient(\myacro{PCG}) step in each iteration of \myacro{ADMM}.
This method has been implemented and serves as a reference it is discussed in Section~\ref{sec_ADMM}.
Recently developed methods~\cite{Precond} for preconditioning the Douglas-Rachford splitting method which are well suited for image processing are applied to the problems.
Additionally the fact that the application of $\R^\ast R$ is essentially the same as the convolution with $\frac{1}{\abs{\cdot}}$, will be used to construct a deconvolution type preconditioner.
\section{A preconditioned Douglas Rachford splitting method}
An overview of the Douglas-Rachford splitting method and its preconditioned version is given, following the presentation in~\cite{Precond}.
Let $X$ and $Y$ be Hilbert spaces, $F \colon X \to \extR$ and $G \colon Y \to \extR$ be proper, convex and \lsc, and let $K \colon X \to Y$ be linear and continuous then the problem
\begin{equation}  \label{opt_main_pdr}
  \min_{u \in X} F(u) + G(Ku),
\end{equation}
has the  optimality condition  $0 \in \partial F(u) + K^\ast \partial G (Ku)$.
Let $ v \in \partial G (Ku)$ which is equivalent to $Ku\in \partial G^\ast (v)$ by Lemma~\ref{lem_Fenchel_eq} then the optimality condition is equivalent to the primal-dual optimality system
\begin{align} \label{opt_primal_dual}
\begin{cases}
&0 \in - Ku     + \partial G^\ast(v) \\
&0 \in K^\ast v + \partial F(u).
\end{cases}
\end{align}
Multiplying by $\sigma >0$ and using axillary variables $\tilde u \in \sigma \partial F(u)$ and $\tilde v \in \sigma \partial G^\ast(v)$ gives $0 \in -\sigma Ku +\tilde v$ and $0 \in K^\ast v + \tilde u$.
Thus the system \eqref{opt_primal_dual} can be written as $ 0 \in \mathcal A w$ with $w=(u,v,\tilde u, \tilde v)$ and
\begin{equation} \label{eq_mathcalA}
\mathcal{A} =
\begin{pmatrix}
&0         &\sigma K^\ast &\id  &0 \\
&-\sigma K &0             &0    &\id \\
&-\id      &0             &(\sigma \partial F)^{-1} &0\\
&0         &-\id          &0 &(\sigma \partial G)^{-1}
\end{pmatrix}.
\end{equation}
The problem $ 0 \in \mathcal A w$ can be solved by applying the proximal point method~\cite{rockafellar1976monotone}
 for $ 0 \in \mathcal M (w^{k+1} - w^{k}) + \mathcal A w^{k+1}$ which results in the iteration $w^{k+1} = (\mathcal{M} + \mathcal{A})^{-1} \mathcal{M} w^k$.
Using the operator
\begin{equation} \label{eq_mathcalM}
\mathcal{M} =
\begin{pmatrix}
&N_1    &0      &-\id   &0      \\
&0      &N_2    &0      &-\id   \\
&-\id   &0      &\id    &0      \\
&0      &-\id   &0      &\id
\end{pmatrix},
\end{equation}
for now assume $N_2 = \id$, then $\mathcal M$ is linear, continuous and self-adjoint positive semi-definite under the assumption that
\begin{equation} \label{asmp_N1N2}
N_1 \colon X \to X \text{ linear, continuous, self-adjoint and } N_1 - \id \geq 0.
\end{equation}
With $\bar u = u - \tilde u$ and $ \bar v = v - \tilde v$ the proximal point method  $w^{k+1} = (\mathcal{M} + \mathcal{A})^{-1} \mathcal{M} w^k$
becomes
\begin{align*}
u^{k+1} &= N_1^{-1}( (N_1 - \id) x^k + \bar{u}^k - \sigma K^\ast v^{k+1} \\
v^{k+1} &= \bar{v}^k + \sigma K u^{k+1} \\
\bar{u}^{k+1} &= \bar{u}^k + ( \id + \sigma \partial F)^{-1}(2 u^{k+1} - \bar{u}^k) - u^{k+1} \\
\bar{v}^{k+1} &= \bar{v}^k + ( \id + \sigma \partial G^\ast)^{-1}(2 v^{k+1} - \bar{v}^k) - v^{k+1},
\end{align*}
using the Moreau identity from Lemma~\ref{lem_moreau_id}.
In~\cite{Precond} it is shown that this iteration converges weakly to a fixed point $w^\ast = (u^\ast, v^\ast, \bar{u}^\ast, \bar{v}^\ast)$ of which $ (u^\ast, v^\ast)$ is a solution of the primal dual optimality system \eqref{opt_primal_dual} if such a solution exits.
Because the iteration for $u$ and $v$ is implicit plug the second equation into the first to obtain
\[
(N_1 + \sigma^2 K^\ast K) u^{k+1} = (N_1 - \id) u^k + \bar{u}^k - \sigma K^\ast \bar{v}^k.
\]
Let $ M = N_1 + \sigma^2 K^\ast K$ and $T = \id + \sigma^2 K^\ast K$ then the assumption on $N_1$ in~\eqref{asmp_N1N2} is true if and only if
\begin{equation*} \label{asmp_M}
M \colon X \to X \text{ linear, continuous, self-adjoint and } M - T \geq 0.
\end{equation*}
Therefore $M$ can be interpreted as a preconditioner for $T$.
Putting these things together yields the following explicit algorithm.
\begin{algorithm}
\caption{\textbf{ PDR } Algorithm for minimizing \eqref{opt_main_pdr}}
\begin{algorithmic} \label{alg_PDR}
\REQUIRE \textbf{Initial values}: $(\unull, \bar{u}^0, v^0, \bar{v}^0) \in X^2 \times Y^2, \sigma >0$ \\
\STATE $ b^k  = \bar{u}^k - \sigma K^\ast \bar{v}^k$  \\
\STATE $ u^{k+1} = u^k + M^{-1}(b^k - Tu^k)$  \\
\STATE $ v^{k+1} = \bar{v}^k + \sigma K u^{k+1} $\\
\STATE $\bar{u}^{k+1} = \bar{u}^k + ( \id + \sigma \partial F)     ^{-1}(2 u^{k+1} - \bar{u}^k) - u^{k+1}$ \\
\STATE $\bar{v}^{k+1} = \bar{v}^k + ( \id + \sigma \partial G^\ast)^{-1}(2 v^{k+1} - \bar{v}^k) - v^{k+1}$ \\
\end{algorithmic}
\end{algorithm}

\subsection{Linear-quadratic functionals}
Assume a linear-quadratic term can be split off from $F$ and $G^\ast$ off while still retaining convexity and lower semi-continuity, that is
\begin{equation} \label{eq_abs_LQ}
  \begin{aligned}
    F(u)      &= \frac{1}{2} \scp{Qu}{u} + \scp{f}{u} + \tilde{F}(u), \\
    G^\ast(v) &= \frac{1}{2} \scp{Sv}{v} + \scp{g}{v} + \tilde{G^\ast}(v)
  \end{aligned}
\end{equation}
with $Q \colon X \to X$, $S \colon Y \to Y$ linear, continuous, self-adjoint, positive semi-definite and $f \in X$, $g \in Y$ and still $\tilde{F} \colon X \to \extR $, $\tilde{G^\ast} \colon Y \to \extR$ proper, convex and \lsc{}.
Then the primal-dual optimality condition becomes
\begin{align} \label{opt_primal_dual_LQ}
\begin{cases}
&0 \in Sv + g - Ku       + \partial \tilde{G^\ast}(v) \\
&0 \in Qu + f + K^\ast v + \partial \tilde{F}(u).
\end{cases}
\end{align}
This is again equivalent to $0 \in \mathcal{A}w$ with
\begin{equation}
\mathcal{A} =
\begin{pmatrix}
&\sigma Q + \sigma \mathds{1} &\sigma K^\ast    &\id &0 \\
&-\sigma K      &\sigma S +\sigma \mathds{1}    &0 &\id \\
&-\id           &0  &(\sigma \partial F)^{-1}   &0      \\
&0  &-\id       &0         &(\sigma \partial G^\ast)^{-1}
\end{pmatrix}.
\end{equation}
Assume that $S = \nu \id$ and choose $\mathcal{M}$ as in \eqref{eq_mathcalM} but letting $N_2 = \mu \id$ for some $\mu \geq 1$ then the iteration $w^{k+1} = (\mathcal{M} + \mathcal{A})^{-1} \mathcal{M} w^k$ as before becomes
\begin{equation} \label{eq_PDRQ_mu_lam}
\left\{
\begin{aligned}
b^k      &= \bar{u}^k - \tfrac{\sigma}{\mu + \sigma \nu}
K^\ast \big( (\mu -1) v^k + \bar{v}^k - \sigma f \big)\\
u^{k+1}  &= u^k + M^{-1}( b^k - T u^k)\\
v^{k+1}  &= \tfrac{1}{\mu + \sigma \nu} \big(
(\mu - 1) v^k + \bar{v}^k + \sigma( K u^{k+1} - g)  \big)\\
\bar{u}^{k+1} &= \bar{u}^k + (\id + \sigma \partial \tilde{F})^{-1}( 2 u^{k+1} - \bar{u}^k ) - u^{k+1}\\
\bar{v}^{k+1} &= \bar{v}^k + (\id + \sigma \partial \tilde{G^\ast})^{-1}( 2 v^{k+1} - \bar{v}^k ) - v^{k+1}.
\end{aligned}
\right.
\end{equation}
But now with $M = N_1 + \sigma Q + \tfrac{\sigma^2}{\mu + \sigma \nu}$ and $T = \id + \sigma Q + \tfrac{\sigma^2}{\mu + \sigma \nu}$.
This iteration converges if $N_1 - \id = M -T \geq 0$.

In the special case of $F$ being purely quadratic-linear, \ie $\tilde{F} = 0$, where the corresponding resolvent $(\id + \sigma \partial \tilde{F})^{-1} = \id$ it holds that $\bar{u}^k = u^k$ for all $k$ and so $\bar{u}$ is not needed.
But moreover in~\cite{Precond} it is shown that the assumption $N_1 - \id \geq 0 $ can be replaced by the assumption $N_1 \geq 0$ and $N_1 + \sigma Q > 0$.
This means that since $M = N_1 + \sigma Q + \tfrac{\sigma^2}{\mu + \sigma \nu} K^\ast K$ it is not needed that $M$ is a preconditioner for  $T = \id + \sigma Q + \tfrac{\sigma^2}{\mu + \sigma \nu} K^\ast K$ but only for $\tilde T = \sigma Q + \tfrac{\sigma^2}{\mu + \sigma \nu} K^\ast K$, \ie only $M-\tilde T \geq 0$ and $M >0$ must be assumed, which allows for more flexibility in choosing $M$.
Plugging this into \eqref{eq_PDRQ_mu_lam} PDRQ for the purely linear quadratic problem becomes
\begin{equation} \label{eq_PDRQ_pure_lq}
\left\{
\begin{aligned}
b^k      &= - \tfrac{\sigma}{\mu + \sigma \nu}
K^\ast \big( (\mu -1) v^k + \bar{v}^k - \sigma f \big)\\
u^{k+1}  &= u^k + M^{-1}( b^k - \tilde{T} u^k)\\
v^{k+1}  &= \tfrac{1}{\mu + \sigma \nu} \big(
(\mu - 1) v^k + \bar{v}^k + \sigma( K u^{k+1} - g)  \big)\\
\bar{v}^{k+1} &= \bar{v}^k + (\id + \sigma \partial \tilde{G^\ast})^{-1}( 2 v^{k+1} - \bar{v}^k ) - v^{k+1}.
\end{aligned}
\right.
\end{equation}
\section{Application to the unconstrained problem} \label{ssec_pdrq_unconst}
The theory from the last section will now be applied to the discretized unconstrained problem
\begin{equation} \label{op_unconst_h}
  \min_{u \in \mathbb{R}^{n \times n} } \, \frac{1}{2} \|  \Rh u - \unull  \|_2^2 + \lambda \| \nabh u \|_1, \tag{$P_h$}
\end{equation}
which is of the form $\min_u F(u) + G(K u)$.

The goal of splitting off a linear-quadratic term as in \eqref{eq_abs_LQ} which was
\begin{align}
  F(u)      &= \frac{1}{2} \scp{Qu}{u} + \scp{f}{u} + \tilde{F}(u), \\
  G^\ast(v) &= \frac{1}{2} \scp{Sv}{v} + \scp{g}{v} + \tilde{G^\ast}(v)
\end{align}
can be achieved for the first line be setting $Q = \Rh^\ast \Rh$ and $f = - \Rh^\ast \unull$ and $\tilde F =0$ where the constant term $\frac{1}{2} \norm{\unull}^2$ was omitted since it only shifts the value of the optimization problem $\min_u F(u) + G(K u)$.
It turn out to be useful to scale the gradient with a factor $\tau >0$ which then plays a similar role as the parameter of the same name in the Chambolle-Pock Algorithm~\ref{alg_abst_CP}.
The involved operators will then be $K= \tau \nabh$, $K^\ast = - \tau\divh$  this can be compensated for by scaling the regularization parameter $ \tilde \lambda = \frac{\lambda}{\tau}$, \ie actually the algorithms are applied to the problem 
\[ 
\min_{u \in X } \, \frac{1}{2} \|  \Rh u - \unull  \|_2^2 + \frac{\lambda}{\tau} \| \tau \nabh u \|_1 .
\]
Thus for $G^\ast$ it holds that $G^\ast = \indicator_{\{ \norm[\infty]{y} <  \tilde \lambda\}}$ and so $S= g = 0$ and  $\tilde G^\ast = \indicator_{\{ \norm[\infty]{y} < \tilde\lambda\}}$ can be set.
The corresponding resolvent is $(\id + \sigma \partial G^\ast)^{-1} = \mathcal{P}_{B^\infty_{\tilde \lambda}(0)}$.
Since $S = 0 = \nu \id$  it holds that $\nu =0$.
Choosing $\mu = 1$ the operator  $ T = \sigma Q + \tfrac{\sigma^2}{\mu + \sigma \nu} K^\ast K= \sigma \Rh^\ast \Rh - (\sigma \tau)^2 \laph$, where $\laph = -\divh \nabh$ is the discrete Laplacian.
Plugging $b^k = - \sigma K^\ast \bar{v}^k - \sigma f$ directly into the iteration \eqref{eq_PDRQ_pure_lq} reduces to
\begin{align} \label{iter_pdrq_unconst_unfin}
u^{k+1}  &= u^k + M^{-1}( \sigma \tau \divh \bar{v}^k - \sigma f - \sigma \Rh^\ast \Rh u^k - (\sigma \tau)^2 \laph u^k)\\
v^{k+1}  &= \bar{v}^k + \sigma \tau \nabh u^{k+1}   \\
\bar{v}^{k+1} &= \bar{v}^k + (\id + \sigma \partial \tilde G^\ast)^{-1}( 2 v^{k+1} - \bar{v}^k ) - v^{k+1}.
\end{align}
This is formulated as a ready to use algorithm in Algorithm~\ref{alg_PDRQS_unconst} where $M$ was chosen as an inverse norm preconditioner, this and some other possible choices for the  preconditioner $M$  are now discussed.
\section{Suitable Preconditioners} \label{ssec_suit_precond}
The goal of this section is to find a preconditioner $M$ for $ T = \alpha  \Rh^\ast \Rh - \beta \laph$ with $\alpha, \beta >0$, that is $M$ a symmetric positive definite operator such that $M-T$ is positive semi-definite as required by PDRQ in the purely linear quadratic case.
One simple choice is to use $M = m \id$ with a constant $m > \norm{T}$, clearly $M$ is symmetric and $M-T \geq 0$, this is called Richardson preconditioning.
In practice $m$ can be found by applying  power iteration~\cite{saad2011numerical} to $T$ which gives $m \geq \norm{T}$.
It is however desirable to use more of the structure of the operator, as a better approximation of the  inverse is expected and thus faster convergence.
In order to find a preconditioner $M$ for $T$  preconditioners $M_1$ for $\Rh^\ast \Rh$ and $M_2$ for $-\laph$ are constructed separately then $M = \alpha M_1 + \beta M_2$ will be a preconditioner for $T$.
For $M_2$ it is possible to make use of the periodic Laplacian $ \Delta_p$ in the form of the operator defined by  $M_2 u = - \Delta_p u =  u \ast \kappa_\Delta$ where $\ast$ denotes periodic convolution and 
\begin{equation}
\kappa_\Delta =
\begin{bmatrix}
 0 & -1 &  0  \\
-1 &  4 & -1  \\
 0 & -1 &  0
\end{bmatrix}.
\end{equation}
Then $M_2$ is symmetric and it holds that
\begin{equation*}
\scp{-\Delta_p u - (-\laph)  u}{u} = \scp{-\Delta_p u}{u}-\scp{ -\Delta_h  u}{u}
= \norm{ \nabla_p u }^2 - \norm{\nabh u }^2 \geq 0
\end{equation*}
since the periodic gradient $\nabla_p$ coincides with $\nabh$ except on the boundary were the entries of $\nabh$ are $0$ but those of $\nabla_p$ might not.
For the choice of $M_1$ imagine that the operator $\Rh$  were the periodic convolution with some kernel \ie $\Rh u = u \ast \kappa$ then simply letting $M u = \alpha \kappa' \ast \kappa \ast u + \beta \kappa_\Delta \ast u$ -- where $\kappa'$ denotes the mirrored kernel of $\kappa$ -- would give a feasible preconditioner since convolution with $\kappa'$ is the adjoint to convolution with $\kappa$.
This could then be inverted using the discrete 2d Fourier transform $\fzdh$ and its inverse due to the convolution theorem:
\begin{equation}
M^{-1} u = \fzdh^{-1}\left(
\frac{\fzdh(u)}{\alpha \fzdh(\kappa)^2 + \beta \fzdh(\kappa_{\Delta})} \right).
\end{equation}
Calculation $\fzdh(\kappa_{\Delta})$ yields:
\begin{align*}
    (\fzdh(\kappa_{\Delta}))_{i,j}
&=
    \sum_{k,l=0}^{n-1} x_{k,l} e^{-\frac{2 \pi \iu}{n} (ik + jl)}  \\
&=4 + x_{1,0} e^{-\frac{2 \pi \iu}{n} i } + x_{n-1,0} e^{\frac{2 \pi \iu}{n} i }
      x_{0,1} e^{-\frac{2 \pi \iu}{n} j } + x_{0,n-1} e^{\frac{2 \pi \iu}{n} j }   \\
&= -( e^{-\frac{2 \pi \iu}{n} i } -2 + e^{\frac{2 \pi \iu}{n} i }) +
   -( e^{-\frac{2 \pi \iu}{n} j } -2 + e^{\frac{2 \pi \iu}{n} j }  ) \\
&= -( e^{-\frac{  \pi \iu}{n} i }    + e^{\frac{  \pi \iu}{n} i }  )^2 +
   -( e^{-\frac{  \pi \iu}{n} j }    + e^{\frac{  \pi \iu}{n} j }  )^2 \\
&= 4 \sin\ndel{\frac{ \pi i}{n} }^2 + 4 \sin\ndel{\frac{ \pi i}{n} }^2
\end{align*}
which is always non-negative.
Thus the denominator is not $0$, since $(\fzdh(\kappa_{\Delta}))_{i,j} = 4\sin(i\pi/n)^2 + 4 \sin(j\pi / n)^2 =0 $ if and only if $i = j =0$ this means that it needs to be ensured that $\fzdh(\kappa)_{0,0}$ which equals $\sum_{i,j} \kappa_{i,j}$ is not $0$.
This approach  is used in~\cite{Precond} for the deblurring case, where the operator which is blurring the image is indeed the convolution with a kernel.
It is not the case that $\R$ is a convolution with some kernel however $\R^\ast \R$ is, this motivates the following approaches.

\subsection{Inverse norm preconditioning} \label{sssec_precond_inv_norm}
In Lemma~\ref{lem_RR_uk} it was shown that $\R^\ast \R u = \frac{1}{\norm{\cdot}} \ast u$ which by the convolution theorem implies that $ \four( \R^\ast \R u) = 2 \pi \four( \frac{1}{\norm{\cdot}}) \four(u)$, where $\four$ denotes the 2-d Fourier transform.
In addition to that from the theory of the Radon transform it is known that
\begin{equation}
  \frac{r}{2\pi} \four( \R^\ast \R u)(r\omega) = \four(u)(r \omega)
\end{equation}
see \cite{epstein}*{p 142} where a different scaling of the Fourier transform and the Radon transform for angles $\varphi \in [0, \pi]$ are used.
Both combined imply that
\begin{equation}
  \four\ndel{\frac{1}{\norm{x}}} = \frac{1}{\norm{x}}.
\end{equation}
This is also known from a connection of the Fourier transform of radially symmetric functions and the Hankel transform see~\cite[sec 9.3]{poularikas2010transforms}.
This means that solving $\R^\ast R u = y $ can be achived by deconvolution with a kernel whose Fourier transform is known.

This idea is now transferred to the discrete setting, where it is generally not true that $\Rh^\ast \Rh$ can be described by a discrete convolution but the idea from the continuous setting is used to efficiently find an approximation of the solution of the linear system $\Rh^\ast \Rh u = y$ for given $y$.
Let $\norm[\varepsilon]{x} = \sqrt{\norm[2]{x}^2 + \varepsilon^2 }$ for some small $\varepsilon >0$ be set as an approximate of norm of $x$ this is done in order to guarantee that the reciprocal can be calculated at every point.
Let $\fzdh$ be the 2-dimensional discrete Fourier transform and $\fzdh^{-1}$ its inverse then then by the convolution theorem $\fzdh( u \ast \kappa) = \fzdh (u)\fzdh(\kappa)$
Therefore the above idea to use deconvolution with a kernel whose Fourier transform equals $\frac{1}{\norm[\varepsilon]{x}}$ leads to a preconditioner of the form $\tilde M u =\fzdh^{-1}( \frac{1}{\| x \| } \fzdh(u))$.
In order to guarantee the feasibility of the preconditioner set $M_1 = c \tilde M  $ and choose $c>0$ great enough to ensure that  $M_1 - \Rh^\ast \Rh \geq 0$.
In practice this can be achieved by performing power iteration on $\tilde M^{-1} \Rh^\ast \Rh$ and then choosing $c$ greater that the largest eigenvalue will ensure that $c \geq \tilde M ^{-1} \Rh^\ast \Rh $, which gives $c \tilde M - \Rh^\ast \Rh \geq 0 $.
Putting this together with the  periodic Laplacian gives
\begin{align}
Mu = c \alpha \fzdh^{-1} \left( \frac{1}{\norm[\varepsilon]{x}} \fzdh(u)\right) + \beta \Delta_p u = y \\
\alpha c \frac{1}{\norm[\varepsilon]{x}} \fzdh(u) + \beta \fzdh( \kappa_\Delta) \fzdh(u) = \fzdh(y) \\
\fzdh(u) = \fzdh(y) \cdot
\left( \alpha c \frac{1}{\norm[\varepsilon]{x}} + \beta \fzdh( \kappa_\Delta) \right)^{-1} \\
u = M^{-1} y = \fzdh^{-1} \left( \fzdh(y) \frac{\norm[\varepsilon]{x}}{ \alpha c + \beta \fzdh(\kappa_\Delta) \norm[\varepsilon]{x}} \right).
\end{align}
A ready-to-use algorithm for solving the discrete problem~\eqref{op_unconst_h} with this preconditioner is provided in Algorithm~\ref{alg_PDRQS_unconst}.
Note that in the algorithm the calculation of $u_{tmp}$ is formulated such that only one application of $\Rh^\ast$ and $\Rh$ is used per iteration.
\begin{algorithm}
\caption{\textbf{PDRQS2} algorithm for minimizing \eqref{op_unconst_h}}
\begin{algorithmic} \label{alg_PDRQS_unconst}
\REQUIRE  \textbf{Input}:  $\unull \in \mathbb{R}^{N \times M}, \,c>0, \, \sigma >0, \, \tau >0, \, h>0 $ \\
\REQUIRE \textbf{Initial values}: $u, \bar v $
\FOR{$k < k_{max}$}
\STATE $u_{tmp} \leftarrow \sigma \tau \divh \bar v + \sigma \Rh^\ast( \unull -  \Rh u)  -(\sigma \tau)^2 \laph u$
\STATE $u_{new} \leftarrow u + \fzdh^{-1}  \ndel{ \fzdh(u_{tmp})
\frac{ \norm[\varepsilon]{x}}{ \sigma c + (\sigma \tau)^2 \fzdh(\kappa_\Delta)\norm[\varepsilon]{x}} } $
\STATE $v \leftarrow \bar v + \sigma \tau \nabh u_{new}$

\STATE $w_{i,j} \leftarrow \frac{2v_{i,j}-\bar v_{i,j}}{ \max \{ 1, \tau \abs{ 2 v_{i,j} - \bar v_{i,j} } / \lambda \} }$
\STATE $\bar v \leftarrow \bar v + w - v$
\STATE $u \leftarrow u_{new}$
\ENDFOR
\end{algorithmic}
\end{algorithm}


\subsection{Impulse response preconditioners} \label{ssec_psf_precond}
Another idea for a preconditioner is to apply $\Rh^\ast \Rh $ to a discrete point function $\delta$.
Again if $\Rh^\ast \Rh $ were a convolution with some kernel this impulse response $\kappa = \Rh^\ast \Rh \delta$ would recover it, so it makes sense to apply deconvolution with $\kappa$.
In the case where the image dimension $n$ is even use $\delta$ = $ \delta_{ \frac{n}{2}, \frac{n}{2}}$ that is the lower right of the middle four entries.
The obtained kernel $\tilde k = \Rh^\ast \Rh \delta$ needs to be be made symmetric via $k = \frac{1}{2}(\tilde{k} + \tilde{k}^T)$, in order to guarantee that the resulting preconditioner is symmetric.
Then the full kernel is $ K = \alpha k + \beta \kappa_{\Delta}$.
This gives the preconditioner
\begin{equation}
    M^{-1}u = \fzdh^{-1}\ndel{ \frac{\fzdh(u)}{\fzdh(K)}}.
\end{equation}
Again a constant $c>$ must be chosen such that $cM -T \geq 0$, again in practice this can be achieved  with power iteration.

\subsection{Circulant matrix preconditioner} \label{ssec_circ_precond}
Another idea similar to the previous was used in~\cite{ramani2012splitting}, it is to approximate $\Rh^\ast \Rh$ by a circulant matrix, that is a matrix of the form
\begin{equation}
  C =
  \begin{pmatrix}
      c_{1  } & c_{p} & \cdots & c_{2} \\
      c_{2}   & c_{1} & \cdots & c_{3} \\
      \vdots  & \vdots  & \ddots & \vdots  \\
      c_{p}    & c_{p-1} & \cdots & c_{1}
  \end{pmatrix}.
\end{equation}
Then the Matrix $C$ can be described by a single vector $c = (c_1, c_2, \ldots, c_p)$ and the application of $C$ to a vector $x$ can be written as the circular convolution of $c$ with $x$.
Consequently the solution of $Cx = b$ is $x = \fldh^{-1}( \fldh(b)/ \fldh(c))$, where $\fldh$ denotes the 1-d discrete Fourier transform.
Again $\tilde k = \Rh^\ast \Rh \delta$ is computed, in order to ensure symmetry of the preconditioner the circulant matrix with $c= k$ with $k_i = \frac{1}{2}(\tilde k_i + \tilde k_{ (n^2 -i \mod n^2)})$ for $i = 1, \ldots, n^2$.
The same procedure is used to obtain a Laplace kernel $\kappa_\Delta$ from $\laph \delta$ then the combined kernel $K= \alpha k + \beta \kappa_\Delta$ is used as a preconditioner
\begin{equation}
    M^{-1} u = \fldh^{-1} \ndel{ \frac{ \fldh(u)}{\fldh(K)} }.
\end{equation}
\section{Constrained Problem}
For the constrained problem
\begin{equation} \label{op_const_h}
  \min_{u\in X} \quad \frac{1}{2} \norm[2]{ \Rh u - \unull}^2 + \lambda \norm[1]{\nabh u} + \indicator_{ \{\Rh u\mid_{\Dm_0} \geq C \}}(u), \tag{$C_h$}
\end{equation}
the same splitting as in~\ref{ssec_pdrq_unconst} cannot be applied since the terms depending on $\Rh$ are not quadratic, due to the indicator function.
Instead  the problem \eqref{op_const_h} is formulated as $\min_{u \in X} F(u) + G( K u)$ with
\begin{equation*}
    G(x,y) =
    \frac{1}{2}\norm[L^2(\Dm \setminus \Dm_0)]{x - \unull}^2  + \lambda \norm[1]{y}
    +  \indicator_{ \{x\mid_{\Dm_0} \geq C \}}(x),
    \qquad K =
    \begin{bmatrix}
    \Rh \\ \nabh
    \end{bmatrix}.
\end{equation*}
Brining it in the quadratic-linear form of \eqref{eq_abs_LQ} which reads as
\begin{align}
F(u) &= \frac{1}{2} \scp{Qu}{u} + \scp{f}{u} + \tilde{F}(u), \\
G^\ast(v) &= \frac{1}{2} \scp{Sv}{v} + \scp{g}{v} + \tilde{G}^\ast(v)
\end{align}
can be achieved with $F = \tilde{F} = Q = f = 0$ and $S = g = 0$ and $ \tilde{G}^\ast = G^\ast $.
In this case $G^\ast(\xi,w) = G^\ast_1(\xi) + G^\ast_2(w)$ with:
\begin{equation} \label{eq_G_ast_1}
G^\ast_1(\xi) = \sum_{(i,j) \in \Dm \setminus \Dm_0} g_1^\ast (\xi_{i,j}) +
           \sum_{(i,j) \in \Dm_0} g_2^\ast( \xi_{i,j})
\end{equation}
with
\begin{equation*}
g_{1,1}^\ast (\xi) = \frac{1}{2}\xi^2 + \xi \unull, \quad \quad
g_2^\ast(\xi_{i,j}) =
    \begin{cases}
    \infty,                     &\text{if } \xi_{i,j} \geq 0  \\
    \xi_{i,j}C,                 &\text{if } \xi_{i,j} < 0.
    \end{cases}
\quad
\end{equation*}
As well as $G^\ast_2(w) = \indicator_{B_{\lambda}^{\infty}(0)}(w)$.
This fits the case of a purely quadratic-linear $F$, as derived in\eqref{eq_PDRQ_pure_lq},thus PDRQ reduces to:
\begin{align*}
b^k           &= - \sigma K^\ast \bar{v}^k     \\
u^{k+1}       &= u^k + M^{-1}( b^k - T u^k) =  \\
              &= u^k + M^{-1}( - \sigma K^\ast \bar{v}^k - \sigma^2 K^\ast K u^k)\\
v^{k+1}       &= \bar{v}^k + \sigma K u^{k+1}  \\
\bar{v}^{k+1} &= \bar{v}^k + (\id + \sigma \partial \tilde G^\ast)^{-1}( 2 v^{k+1} - \bar{v}^k ) - v^{k+1}.
\end{align*}
Now with $\bar v = ( \bar v_1, \bar v_2)$ so that $K^\ast \bar v = \Rh^\ast \bar v_1 - \dive \bar v_2$, and
\begin{equation*}
K^\ast K u = K^\ast (\Rh u, \nabh u)^T = \Rh^\ast \Rh u - \laph u.
\end{equation*}
Thus the argument that $M^{-1}$ is applied to is
\begin{align}
  &- \sigma (\Rh^\ast \bar v_1 - \divh \bar v_2 )  - \sigma^2 (\Rh^\ast \Rh u - \laph u)  \\
= &- \sigma \big( \Rh^\ast( \bar v_1 + \sigma \Rh u) -\divh( \bar v_2 + \sigma \nabh u) \big),
\end{align}
which means that $\Rh^\ast$ will need to be applied once per iteration, also if $\Rh u$ was saved from the previous iteration $\Rh$ will also be applied once per iteration.

The resolvents were calculated in~\ref{lem_f_ast_resolv1} to be
\begin{equation}
(\id + \sigma \partial G^\ast)^{-1}(\bar v, \bar w) =
    \begin{pmatrix}
    (\id + \sigma \partial G^\ast_1)^{-1}(\bar v) \\
    (\id + \sigma \partial G^\ast_2)^{-1}(\bar w)
    \end{pmatrix}
\end{equation}
with $(\id + \sigma \partial G^\ast_2)^{-1}(\bar w) = \mathcal{P}_{B_{\lambda}^\infty(0)}(\bar w)$ and
\begin{equation}
(\id + \sigma \partial G^\ast_1)^{-1}(\bar v_{i,j}) =
    \begin{cases}
    \frac{\bar v_{i,j}-\sigma (\unull)_{i,j}}{1+\sigma},     &\text{if } (i,j) \in \Dm \setminus \Dm_0  \\
    \min \{\bar v_{i,j} - \sigma C,0 \} ,                 &\text{if } (i,j) \in \Dm_0. \\
    \end{cases}
\end{equation}
In this case let $Q=0$, so now $T=\sigma^2 K^\ast K $ needs to be preconditioned which can be achieved by the preconditioner from~\ref{sssec_precond_inv_norm}
\begin{equation*}
M^{-1} y = \fzdh^{-1} \left( \fzdh(y) \frac{\| x \|}{ \sigma^2 c + (\sigma \tau)^2 \fzdh(\kappa_\Delta) \| x\| }\right).
\end{equation*}
Combining these gives Algorithm~\ref{alg_PDRQS_const} for minimizing the constraint problem \eqref{op_const_h}.

\begin{algorithm}
\caption{\textbf{PDRQS1} algorithm for minimizing \eqref{op_const_h}}
\begin{algorithmic} \label{alg_PDRQS_const}
\REQUIRE  \textbf{Input}:  $\unull \in \mathbb{R}^{N \times M},\Dm_0, C >0,\,c>0, \, \sigma >0, \, \tau >0, \, h>0 $ \\
\REQUIRE \textbf{Initial values}: $u, \bar v_1, \bar v_2$
\FOR{$k < k_{max}$}
\STATE $u_{tmp} \leftarrow - \sigma \big(
\Rh^\ast( \bar v_1 + \sigma \Rh u) - \tau \dive( \bar v_2 + \sigma \tau \nabla u) \big)$
\STATE $u_{new} \leftarrow u + \fzdh^{-1}  \ndel{ \fzdh(u_{tmp})
\frac{ \norm{x} }{ \sigma^2 c + (\sigma \tau)^2 \fzdh(\kappa_\Delta)\norm{x}} } $
\STATE $v_1 \leftarrow \bar v_1 + \sigma \Rh u_{new}$
\STATE $v_2 \leftarrow \bar v_2 + \sigma \tau \nabla u_{new}$

\STATE $(v_{tmp})_{i,j} \leftarrow
\begin{cases}
    \frac{ (2v_1 - \bar v_1)_{i,j}-\sigma (\unull)_{i,j}}{1+\sigma},
    &\text{if } (i,j) \in \Dm \setminus \Dm_0\\
\min \{(2v_1 - \bar v_1)_{i,j} - \sigma C,0 \} ,
    &\text{if } (i,j) \in \Dm_0. \\
\end{cases}$
\STATE $ \bar v_1 \leftarrow \bar v_1 + v_{tmp} - v_1$

\STATE $w_{i,j} \leftarrow \frac{2(v_2)_{i,j}- (\bar v_2)_{i,j}}{ \max \{ 1, \tau\abs{ 2 (v_2)_{i,j} - (\bar v_2)_{i,j} } / \lambda \} }$
\
\STATE $\bar v_2 = \bar v_2 + w - v_2$

\STATE $u \leftarrow u_{new}$
\ENDFOR
\end{algorithmic}
\end{algorithm}
So far two ways of splitting of a quadratic functional were used.
Another possibility is to take the quadratic term of $G^\ast_1$ from~\eqref{eq_G_ast_1} that is the one which comes from $g^\ast_{1,1} = \frac{1}{2} \xi^2 + \xi \unull$ into $S$ and $g$.
This gives
\begin{equation}
  S = \begin{bmatrix}\id_\Dm &0 \\0 &0\end{bmatrix}
\end{equation}
with $\id_\Dm = \chi_{\Dm \setminus \Dm_0}$ and $ g = \unull\cdot \chi_{\Dm \setminus \Dm_0}$.
The corresponding resolvent will then be
\begin{equation}
(\id + \sigma \partial G^\ast_1)^{-1}(\bar v_{i,j}) =
    \begin{cases}
    \bar v_{i,j} ,                &\text{if } (i,j) \in \Dm \setminus \Dm_0  \\
    \min( \frac{\bar v_{i,j}}{1+\sigma C},0),     &\text{if } (i,j) \in \Dm_0.  \\
    \end{cases}
\end{equation}
The following table gives an overview of the splitting used and names the corresponding algorithms. No major difference in the performance of these algorithms were observed.
\begin{table}[h]
  \centering
    \begin{tabular}{l | ccc ccc c c }
         &$Q$ 	         &$f$  & $K$ & $S$ & $g$ & $ \tilde{G^\ast}$ & constraints \\ \hline
    PDRQ1	 &$0$ 	         &$0$                   & $\begin{bmatrix} \Rh \\  \tau\nabh \end{bmatrix}$  &$0$       & $0$ & $G^\ast$ & yes  \\
    PDRQ2	 &$\Rh^\ast \Rh$ &$- \Rh^\ast \unull$   & $\tau\nabh $                                       &$0$       & $0$ & $\indicator_{ \norm[\infty]{y} < \frac{\lambda}{\tau}}$  & no \\
    PDRQ3	 &$0$ 	         &$0$                   & $\begin{bmatrix} \Rh \\  \tau\nabh \end{bmatrix}$  &$\begin{bmatrix} \id_{\Omega} &0 \\0 &0\end{bmatrix}$ &$\begin{bmatrix}\unull \\ 0 \end{bmatrix}$ &$\indicator_{ \norm[\infty]{y} < \frac{\lambda}{\tau}}$ & yes \\

    \end{tabular}
  \caption{Various splittings for the quadratic-linear problem keeping $\tilde{F}=0$.} \label{tab_splitting}
\end{table}
\FloatBarrier
\section{Non-negativity constraints} \label{ssec_non_neg_u}
As a possible extension of the model one may try to enforce physical constrain of non-negative radio density.
Since solutions of the above mentioned problems might have negative values, it is possible to try to enforce non-negativity constraints \ie $u \geq 0$.
Thus try to solve
\begin{equation} \label{opt_min_const_non_neg}
\min_{u\in X} \quad \frac{1}{2} \norm[2]{ \Rh u - \unull}^2 + \lambda \norm[1]{\nabh u}
+ \indicator_{ \{\Rh u\mid_{\Dm_0} \geq C \}}(u) + \indicator_{\{u \geq 0\}}.
\end{equation}
Then $F$ will not be linear quadratic anymore but still $F = \tilde F = \indicator_{[0, \infty)}$.
Or more precisely $\tilde{F}(u) = \int_M \tilde{f}(u(x)) \dd x$ with  $\tilde f = \indicator_{[0, \infty)}$.
Therefore calculate $ (\id + \sigma \partial \tilde F)^{-1}$.
\begin{lemma} \label{lem_resolv_non_neg}
The function $\tilde f(x) = \indicator_{[d,\infty)}(x)$ has the resolvent
\begin{equation*}
(\id + \sigma \partial \tilde f)^{-1} (y) = \max( y, d).
\end{equation*}
for any $ \sigma >0$.
\end{lemma}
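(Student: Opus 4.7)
The plan is to unfold the definition of the resolvent and reduce the statement to a one-dimensional projection problem. By the definition given earlier in the thesis, $(\id + \sigma \partial \tilde f)^{-1}(y)$ is the unique minimizer of $x \mapsto \tfrac{1}{2}(x-y)^2 + \sigma \tilde f(x)$. Since $\sigma \tilde f = \sigma \indicator_{[d,\infty)} = \indicator_{[d,\infty)}$ (note that $\sigma>0$ does not change the indicator), the minimization reads
\begin{equation*}
\min_{x \in \mathbb{R}} \tfrac{1}{2}(x-y)^2 \quad \text{subject to } x \geq d,
\end{equation*}
which is the orthogonal projection of $y$ onto the closed convex set $[d,\infty)$. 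This projection is precisely $\max(y,d)$.

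First I would formally check this by case distinction on $y$. If $y \geq d$, then $x = y$ is feasible and achieves the global unconstrained minimum zero, so it is the minimizer and equals $\max(y,d) = y$. If $y < d$, the objective is strictly increasing on $[d,\infty)$ (since $x \geq d > y$), so its minimum is attained at the boundary $x = d = \max(y,d)$.

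Alternatively, and perhaps more in line with the rest of the chapter, I would verify the inclusion $y \in x + \sigma \partial \tilde f(x)$ directly using the subdifferential of the indicator function, which is the normal cone:
\begin{equation*}
\partial \tilde f(x) =
\begin{cases}
\{0\}, & x > d, \\
(-\infty,0], & x = d, \\
\emptyset, & x < d.
\end{cases}
\end{equation*}
For $y \geq d$, the candidate $x = y$ gives $\partial \tilde f(x) \ni 0$ and thus $y = x + \sigma \cdot 0$ holds. For $y < d$, the candidate $x = d$ requires some $s \in (-\infty,0]$ with $y = d + \sigma s$, which is solved by $s = (y-d)/\sigma \leq 0$, so the inclusion holds. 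Since $\tilde f$ is proper, convex and lower semi-continuous, the resolvent is single-valued, so in both cases $x = \max(y,d)$ is the unique solution.

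There is no real obstacle here; the statement is essentially the well-known identity that the proximal mapping of the indicator of a closed convex set is the metric projection onto that set, specialised to the half-line $[d,\infty)$. The only thing worth being careful about is that the factor $\sigma > 0$ does not affect the result, which is clear because $\sigma \indicator_{[d,\infty)} = \indicator_{[d,\infty)}$.
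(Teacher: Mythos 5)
Your proposal is correct, and its second argument (computing $\partial \tilde f$ as the normal cone of $[d,\infty)$ and solving the inclusion $y \in x + \sigma \partial \tilde f(x)$ by cases) is exactly the proof given in the thesis, which likewise notes the equivalence with minimizing $\frac{1}{2}(x-y)^2 + \sigma \indicator_{[d,\infty)}(x)$ as a closing remark. Your leading projection-onto-$[d,\infty)$ viewpoint is just that remark promoted to the main argument, so there is nothing substantively different to compare.
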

\begin{proof}
It holds that
\begin{equation*}
\partial \tilde f (x)=
    \begin{cases}
    0, &\text{if } x > d, \\
    (-\infty,0], &\text{if } x = d, \\
    \emptyset, &\text{if } x < d,
    \end{cases}
    \qquad
(\id + \sigma \partial \tilde f )(x)=
    \begin{cases}
    x, &\text{if } x > d, \\
    (-\infty,d], &\text{if } x = d, \\
    \emptyset, &\text{if } x < d.
    \end{cases}
\end{equation*}
Thus in $(\id + \sigma \partial \tilde f )(x) = y$ for $y > d$ only $x=y$ is a solution and for $y \leq d$ only $ x = d$ is solution. Thus $(\id + \sigma \partial \tilde f )^{-1}(y) = \max(y,d)$.
This corresponds to solving $\min_x \frac{1}{2} \norm{x-y} + \sigma \indicator_{[d,\infty)}(x)$.
\end{proof}
This time $F$ will not be linear quadratic, thus use the iteration (3.5) from \cite{Precond}, again using $\mu = 1$ and $\nu = 0$.
Now to precondition $ T = \id + \sigma^2 K^\ast K$ choose the operator $ Mu = u + c \sigma^2 \fzdh^{-1} \left( \frac{1}{\| x \|} \fzdh(u)\right) - (\sigma \tau)^2 \Delta_p u $ and calculate $M^{-1}$:
\begin{align}
u + c \sigma^2 \fzdh^{-1}\left( \frac{1}{\norm{x}} \fzdh(u)\right) - (\sigma \tau)^2 \Delta_p u = Mu = y \\
\fzdh(u) + c \sigma^2  \frac{1}{\norm{x}} \fzdh(u) + (\sigma \tau)^2 \fzdh( \kappa_\Delta) \fzdh(u) = \fzdh(y) \\
\fzdh(u) = \fzdh(y) \cdot
\left( 1 +  c \sigma^2  \frac{1}{\norm{x}} + (\sigma \tau)^2 \fzdh( \kappa_\Delta) \right)^{-1} \\
u = M^{-1} y = \fzdh^{-1} \left( \fzdh(y) \frac{\norm{x}}{ \norm{x} + c \sigma^2  + (\sigma \tau)^2 \fzdh(\kappa_\Delta) \norm{x} }\right).
\end{align}
This can now be used as a preconditioner in Algorithm~\ref{alg_PDRQS3_const_non_neg}.
\begin{algorithm}
\caption{\textbf{ PDRQS1} algorithm for minimizing \eqref{opt_min_const_non_neg}}
\begin{algorithmic} \label{alg_PDRQS3_const_non_neg}
\REQUIRE  \textbf{Input}:  $\unull \in \mathbb{R}^{N \times M}, C >0,\,c>0, \, \sigma >0, \, \tau >0, \, h>0 $ \\
\REQUIRE \textbf{Initial values}: $u, \bar{u}, \bar v_1, \bar v_2$
\FOR{$k < k_{max}$}
\STATE $u_{tmp} \leftarrow \bar u - u - \sigma \big(
\Rh^\ast( \bar v_1 + \sigma \R u) - \tau \dive( \bar v_2 + \sigma \tau \nabla u) \big) $
\STATE $u_{new} \leftarrow u + \fzdh^{-1} \ndel{ \fzdh(u_{tmp})
\frac{ \norm{x} }{ \norm{x} + \sigma^2 c + (\sigma \tau)^2 \fzdh(\kappa_\Delta)\norm{x}} } $
\STATE $v_1 \leftarrow \bar v_1 + \sigma \R u_{new}$
\STATE $v_2 \leftarrow \bar v_2 + \sigma \tau \nabh u_{new}$

\STATE $(v_{tmp})_{i,j} \leftarrow
\begin{cases}
    \frac{ (2v_1 - \bar v_1)_{i,j}-\sigma (\unull)_{i,j}}{1+\sigma},
    &\text{if } (i,j) \in \Dm \setminus \Dm_0\\
\min \{(2v_1 - \bar v_1)_{i,j} - \sigma C,0 \} ,
&\text{if } (i,j) \in \Dm_0. \\
\end{cases}$
\STATE $ \bar v_1 \leftarrow \bar v_1 + v_{tmp} - v_1$

\STATE $w_{i,j} \leftarrow \frac{2(v_2)_{i,j}- (\bar v_2)_{i,j}}{ \max \{ 1, \tau \abs{ 2 (v_2)_{i,j} - (\bar v_2)_{i,j} } / \lambda \} }$
\
\STATE $\bar v_2 = \bar v_2 + w - v_2$

\STATE $ \bar u \leftarrow \bar u + \max \{ 2 u_{new} - \bar u, 0 \} - u_{new}$
\STATE $u \leftarrow u_{new}$
\ENDFOR
\end{algorithmic}
\end{algorithm}

\section{Alternating Direction Method of Multipliers} \label{sec_ADMM}
In this section the method proposed in~\cite{ramani2012splitting} of using the Alternating Direction Method of Multipliers(\myacro{ADMM}) is discussed and implemented for the problems in question. 
Consider again the unconstrained problem:
\begin{equation*}  \label{opt_unconst_h2}
  \min_{ u \in X} \quad \frac{1}{2} \norm[2]{ \Rh u - \unull}^2 + \lambda \norm[1]{ \nabh u} \tag{$P_h$}.
\end{equation*}
This can be rewritten as a constrained problem by introducing the additional variables $v$ and $w$:
\begin{equation}
\min_{u, v, w } \, \frac{1}{2} \norm[2]{w - \unull}^2 + \lambda \norm[1]{v} \\
, \, \text{s.t. } w = \Rh u, \, v = \nabh u.
\end{equation}
Since both of the constraints are linear this can be more compactly written with a single constraint matrix:
\begin{equation} \label{opt_admm_abst}
\min_{u, z } \, f(z)
\text{ s.t. } z = K u
\end{equation}
with $z = \begin{bmatrix}w, v \end{bmatrix}^T$ and
    $ K = \begin{bmatrix}\Rh \\ \nabh \end{bmatrix}$ as well as $f(z) = f(\begin{bmatrix}w, v \end{bmatrix}^T) =  \frac{1}{2} \norm[2]{w - \unull}^2 + \lambda \norm[1]{v}$.

The associated augmented Lagrangian with Lagrange multiplier $\gamma $ and parameter $\mu \in \mathbb{R}$ is
\begin{equation*}
\mathcal{L}(u,z,\gamma) = f(z) + \gamma^T(z-K u) + \frac{\mu}{2}\norm[\Lambda]{z - K u}^2
\end{equation*}
where $\Lambda$ is a symmetric weighting matrix and  $\norm[\Lambda]{x} = \norm{\Lambda x}$.
Indeed if $\lambda$ is chosen as
\begin{equation*}
\Lambda =
\begin{bmatrix}
I_1 &0 \\
0 &\tau I_2.
\end{bmatrix}
\end{equation*}
where with $I_1$ the identity on $\mathds{R}^{N \times M}$ and $I_2$ the identity on $\mathds{R}^{n \times n \times 2}$, this results in the same scaling of the gradient with a factor $\tau$ as in the PDRQ and Primal-Dual algorithms, specifically $\norm[\Lambda]{[w,v]^T} = \norm[2]{w} + \tau \norm[2]{v}$.
Putting the terms containing $z-Ku$ together and completing the square gives a term $c_\eta$ that does not depend on $u$ or $z$
\begin{equation} \label{eq_admm_lagr}
\mathcal{L}(u,z,\eta) = f(z) + \frac{\mu}{2}\norm[\Lambda]{z - K u - \eta}^2 + c_\eta
\end{equation}
with $\eta = -\frac{1}{\mu} \Lambda^{-1}\gamma$
and $c_\eta = - \frac{\mu}{2}\norm[\Lambda]{\eta}^2$ independent of $u$ and $z$.
The alternating direction method consists of minimizing in each component separately
\begin{align}
    u^{(j)}_\ast &= \argmin_u \mathcal{L}(u,z^{(j)}  ,\eta^{(j)}) \label{eq_admm_u_upd}\\
    z^{(j)}_\ast &= \argmin_z \mathcal{L}(u^{(j+1)},z,\eta^{(j)}) \label{eq_admm_z_upd}
\end{align}
and then updating $\eta$ according to
\begin{equation*}
\eta^{(j+1)} = \eta^{(j)} - \mu ( z^{(j+1)} - K u^{(j+1)} ).
\end{equation*}
In~\cite{ramani2012splitting} it is shown that if~\eqref{eq_admm_u_upd} and~\eqref{eq_admm_z_upd} are solved approximately, that is choosing $\norm{u^{(j+1)} - u^{(j)}_\ast}< \varepsilon^{(j)}_u$ and $\norm{z^{(j+1)} - z^{(j)}_\ast}< \varepsilon^{(j)}_z$ with $\sum \varepsilon^{(j)}_u < \infty$ and  $\sum \varepsilon^{(j)}_z < \infty$, then the generated sequence $(u^{(j)},z^{(j)})$ will converge to a solution of~\eqref{opt_admm_abst}.
Since both $f(z)$ and $c_\eta$ in~\eqref{eq_admm_lagr} do not depend on $u$ the minimization with respect to $u$ in~\eqref{eq_admm_u_upd} is equivalent to minimizing $\norm{\Lambda (z^{(j)} - \eta^{(j)} ) - \Lambda K u }^2$ which can be achieved by solving the normal equation
\begin{equation}
K^T \Lambda^2 K u = K^T \Lambda^2 (z^{(j)} -\eta^{(j)} ).
\end{equation}
By the definitions of $K$ and $\Lambda$ this is
\begin{equation} \label{eq_admm_pcg}
    (\Rh^\ast \Rh - \tau^2 \laph)u = \Rh^\ast(z^{(j)} _w - \eta^{(j)} _w) - \tau^2 \divh (z^{(j)} _v-\eta^{(j)} _v)
\end{equation}
which can be solved approximately using the preconditioned conjugate gradient(\myacro{PCG}) method where the matrix to be preconditioned $K^\ast K$  is the same as in Section~\ref{ssec_suit_precond}, in the original paper the cone-type preconditioner presented in Section~\ref{ssec_circ_precond} was used, but all other preconditioners shown from this thesis can be used instead.
The minimization in \eqref{eq_admm_z_upd} decouples into one over $w$ and one over $v$, as both in the quadratic term in~\eqref{eq_admm_lagr} and $f(z)$ the two components of $z = (w,v)$ never appear together.
The term to be minimized for $w$ is
\begin{align}
w^{(j+1)} &= \argmin_w \frac{1}{2} \norm[2]{  w - \unull }^2 + \frac{\mu}{2}\norm[2]{w - \Rh u^{(j+1)} - \eta^{(j)}_w}^2
\end{align}
the solution of this quadratic problem is
\begin{align}
u^{(j+1)} &= \frac{1}{1 + \mu} ( \unull + \mu( \R u^{(j+1)} + \eta^{(j)}_w ) ).
\end{align}
The minimization over $v$ is
\begin{align*}
v^{(j+1)} &= \argmin_v \lambda \norm[1]{ v } + \frac{\mu \nu }{2}\norm[2]{ v - \nabla u^{(j+1)} - \eta^{(j)}_v }^2
\end{align*}
the solution of which can be calculated explicitly see Lemma~\ref{lem_soft_thres2d}.
Note that in the definition of the discretization of the $TV$ semi-norm the isotropic norm was chosen, that is
\begin{equation*}
\norm[1]{ v } = \sum_{i,j} \sqrt{ (v_{i,j,1})^2 + (v_{i,j,2})^2}.
\end{equation*}
The corresponding resolvent is called soft thresholding and is calculated in the following lemma.
\begin{lemma} \label{lem_soft_thres2d}
For $ \tau > 0 $ and $u \in \mathbb{R}^2$ with the usual Euclidean norm $\norm{u} = \sqrt{u_1^2 + u_2^2}$ the solution of
\begin{equation}
    \min_{x \in \mathbb{R}^2} f(x) = \min_{x \in \mathbb{R}^2} \, \frac{1}{2} \norm{x - u}^2 + \tau \norm{x}
\end{equation}
is attained at $x = 0$  if $ \norm{u} \leq \tau$ and at
$ x = (1 - \frac{\tau}{\norm{u}}) u $ if $\norm{u} > \tau$.
\end{lemma}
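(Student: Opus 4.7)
The plan is to characterize minimizers via the first-order optimality condition $0 \in \partial f(x)$, which is both necessary and sufficient since $f$ is proper, convex and continuous. Because $\tfrac{1}{2}\norm{\cdot - u}^2$ is Fr\'echet differentiable, the subdifferential splits as $\partial f(x) = (x - u) + \tau\,\partial\norm{x}$, where the subdifferential of the Euclidean norm is the standard object: $\partial\norm{x} = \{x/\norm{x}\}$ for $x \neq 0$, and $\partial\norm{0}$ equals the closed unit ball $\overline{B_1(0)} \subset \mathbb{R}^2$.

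With this in hand I would verify the two cases directly. For $\norm{u} \leq \tau$, I check that $x = 0$ satisfies the optimality condition: $0 \in \partial f(0) = -u + \tau\overline{B_1(0)}$ reads $u \in \tau\overline{B_1(0)}$, which holds exactly when $\norm{u} \leq \tau$. For $\norm{u} > \tau$ I substitute the candidate $x^\ast = (1 - \tau/\norm{u})u$; this is a strictly positive multiple of $u$ with $\norm{x^\ast} = \norm{u} - \tau > 0$, so $x^\ast/\norm{x^\ast} = u/\norm{u}$, and a short substitution gives $x^\ast - u + \tau\,u/\norm{u} = -\tau\,u/\norm{u} + \tau\,u/\norm{u} = 0$. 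Uniqueness of the minimizer follows from strict convexity of the data term combined with convexity of $\norm{\cdot}$.

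The main obstacle, though minor, is the careful treatment of $\partial\norm{0}$ and the corresponding set inclusion; everything else is routine. As an alternative route that bypasses subdifferentials entirely, one can argue geometrically when $u \neq 0$: an orthogonal decomposition $x = x_\parallel + x_\perp$ along $u$ yields $\norm{x - u}^2 = \norm{x_\parallel - u}^2 + \norm{x_\perp}^2$ and $\norm{x} \geq \norm{x_\parallel}$, so $f(x_\parallel) \leq f(x)$ with strict inequality whenever $x_\perp \neq 0$; hence any minimizer lies on the line $\mathbb{R}u$. A direct comparison with $x = 0$ rules out negative multiples of $u$, reducing the problem to minimizing $g(t) = \tfrac{1}{2}(t - \norm{u})^2 + \tau t$ over $t \geq 0$. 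Elementary one-dimensional calculus gives $t^\ast = \max(\norm{u} - \tau, 0)$, which recovers the stated formula, and the trivial case $u = 0$ is handled separately.
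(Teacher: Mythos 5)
Your proof is correct, but your primary route differs from the paper's. The paper argues geometrically: it writes $x = \alpha u + \beta v$ with $\scp{u}{v} = 0$, observes that the objective decouples and satisfies $f(\alpha u + \beta v) \geq f(\alpha u)$, and then minimizes the one-dimensional function $g(\alpha) = \frac{1}{2}(\alpha - 1)^2 \norm{u}^2 + \tau \abs{\alpha}\norm{u}$ by elementary calculus, comparing the critical value $\alpha^\ast = 1 - \tau/\norm{u}$ against $\alpha = 0$ — this is essentially the ``alternative route'' you sketch at the end (where you are slightly more careful than the paper, ruling out negative multiples of $u$ and treating $u = 0$ separately). Your main argument instead verifies the optimality condition $0 \in \partial f(x)$ directly, using $\partial f(x) = (x-u) + \tau\,\partial\norm{x}$ and the fact that $\partial\norm{0}$ is the closed unit ball: for $\norm{u} \leq \tau$ this is exactly the inclusion $u \in \tau \overline{B_1(0)}$, and for $\norm{u} > \tau$ the candidate $x^\ast = (1 - \tau/\norm{u})u$ is a positive multiple of $u$, so the stationarity equation collapses to $0 = 0$; strict convexity then gives uniqueness. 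The subdifferential route is shorter, dimension-independent (it works verbatim in $\mathbb{R}^d$ or a Hilbert space), and fits the convex-analysis machinery the paper already uses for its other resolvent computations, at the cost of needing the explicit form of $\partial\norm{\cdot}$ at the origin; the paper's decomposition argument is more elementary and self-contained, requiring nothing beyond one-variable calculus and a comparison of two function values.
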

\begin{proof}
Let $f(x)  = \frac{1}{2} \norm{x - u}^2 + \tau \norm{x}  $, choose $v$ with $\scp{u}{v} = 0$, then $ x = \alpha u + \beta  v$.
Then $f$ is as a function of $\alpha$ and $\beta$ is
\begin{align}
f(\alpha u +  \beta v)
&= \frac{1}{2} \norm{(\alpha -1) u + \beta v}^2 + \tau \sqrt{ \norm{\alpha u + \beta v}^2} \\
&= \frac{1}{2}(\alpha -1)^2 \norm{u}^2 + \beta^2 \norm{v}^2 + \tau \sqrt{ \alpha^2 \norm{u}^2 + \beta^2 \norm{v}^2}
\end{align}
which decouples $\alpha$ and $\beta$ as the mixed terms disappear due to $u \perp v$.
Also note that $f(\alpha,\beta) \geq f(\alpha,0)$ thus it suffices to minimize $g(\alpha) = \frac{1}{2}(\alpha-1)^2 \norm{u}^2 + \tau\abs{\alpha} \norm{u}$.
Critical points of $g$ are $ \alpha = 0$ and the roots of its in $\alpha \neq 0$ defined derivative $g'(\alpha) = (\alpha -1) \norm{u}^2 + \frac{\alpha}{\abs{\alpha}} \tau \norm{u} = 0$, which leads to
$\alpha \norm{u} = \norm{u} - \frac{\alpha}{\abs{\alpha}} \tau$
the only solution of which is $\alpha^\ast = 1 - \frac{ \tau }{\norm{u}}$ if $\norm{u} > \tau$.
Comparing the values of $f$ at the critical points gives $f(\alpha^\ast u) = \tau \norm{u} - \frac{1}{2} \tau^2 < \frac{1}{2} \norm{u}^2 = f(0)$ thus the minimum is attained at $x = \alpha^\ast u$ if $\norm{u} > \tau$ and at $x = 0$ otherwise.
\end{proof}
Therefore the minimizer of~\eqref{eq_admm_z_upd} can be calculated explicitly which together with solving~\eqref{eq_admm_u_upd} up to accuracy of $\varepsilon^{(j)}_u$ the \myacro{ADMM} method will produce a sequence $u^{(j)}$ converging to the solution $u^\ast$ of~\eqref{opt_admm_abst}.

\chapter{Results} \label{sec_res}
\section{Regularized Reconstruction}
Solutions of the unconstrained problem 
\begin{equation*}  \label{opt_unconst_h_res}
  \min_{ u \in X} \quad \frac{1}{2} \norm[2]{ \Rh u - \unull}^2 + \lambda \norm[1]{ \nabh u} \tag{$P_h$}.
\end{equation*}
can be seen as a regularized reconstruction and as such as an alternative to the filtered backprojection or other reconstruction methods.
Increasing $\lambda$ will smooth the image while decreasing it will leave the effects of noise.  
\subsection{Synthetic data}
In order to test the described methods they are first applied to synthetic data. 
The Shepp-Logan Phantom, which is standard test image for image reconstruction methods, is compromised of ellipses of varying density, let it be the true image $P_\ast$.
A sinogram $\unull$ is produced by applying $\Rh$ to $P_\ast$ and adding five percent noise, that is random samples from a normal distribution with mean zero and standard deviation 5 percent that of the true sinogram $\Rh P_\ast$.
In Figure~\ref{fig_syn_unconst} these images can be seen together with the result of numerically solving the unconstrained problem with $\lambda = 0.3$, thus providing a regularized reconstruction of the phantom.
\begin{figure}[p]
    \centering
    \begin{subfigure}[b]{0.4\textwidth}
        \includegraphics[width=\textwidth]{./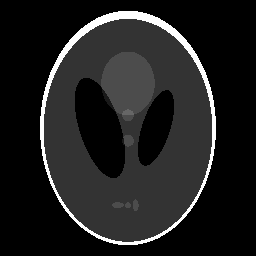}
        \caption{Phantom $P_\ast$} \label{syn_unconst_p}
    \end{subfigure}
    \begin{subfigure}[b]{0.4\textwidth}
        \includegraphics[height=\textwidth]{./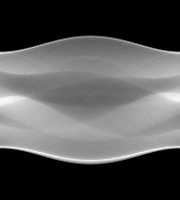}
        \caption{Sinogram $\Rh P_\ast$} \label{syn_unconst_sinot}
    \end{subfigure}
    \begin{subfigure}[b]{0.4\textwidth}
        \includegraphics[width=\textwidth]{./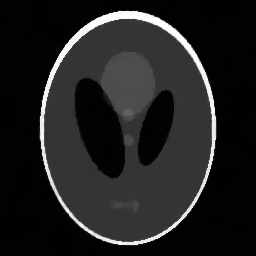}
        \caption{Result}\label{syn_unconst_res}
    \end{subfigure}
    \begin{subfigure}[b]{0.4\textwidth}
        \includegraphics[height=\textwidth]{./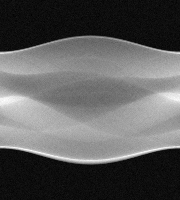}
        \caption{Noisy sinogram $\unull$} \label{syn_unconst_sino}
    \end{subfigure}
    \caption[Regularized reconstruction from synthetic data]{The discretized Radon transform is applied to the Shepp-Logan phantom \subref*{syn_unconst_p} producing a sinogram \subref*{syn_unconst_sinot}, after adding noise this provides the data $\unull$ in \subref*{syn_unconst_sino}. In \subref*{syn_unconst_res} the solution of the unconstrained problem with $\lambda=0.3$ can be seen.}
\label{fig_syn_unconst}
\end{figure}
The grid size $h$ was chosen to be $h=1$. 
For numerical stability it is desirable to have an operator $A$ with $\norm{A} <1$.
As a constant $\beta$ such that $ \| \Rh \| \leq \beta$ is known from Lemma~\ref{lem_Rh_bound} and since solving
\begin{equation}
\min_{u \in \mathbb{R}^{n \times n} } \, \frac{1}{2}  \|  \Rh u -  \unull  \|_2^2
    + \tilde\lambda \beta^2 \norm[1]{\nabh u}
\end{equation}
with $\lambda = \tilde \lambda \beta^2$ is equivalent to solving
\begin{equation}
\min_{u \in \mathbb{R}^{n \times n} } \, \frac{1}{2} \|  (\beta^{-1}\Rh) u -  \beta^{-1}\unull  \|_2^2
 +  \tilde \lambda \| \nabh u \|_1  \\
\end{equation}
proceed as follows:
Apply all algorithms to the operator $\beta^{-1} \Rh$ and its adjoint $\beta^{-1} \Rh^\ast$ with data $ \beta^{-1} \unull$ and parameter $\tilde \lambda = \beta^{-2} \lambda$.
Then the condition on the parameters of the Chambolle-Pock algorithm which read $\sigma \tau < ( \norm{A}^2 + \frac{8}{h^2})^{-1}$ reduces to $ \sigma \tau < \frac{1}{9}$

\subsection{Real data}
In~\cite{hamalainen2014total} the TV-regularized reconstruction model was used to reconstruct from projections from a limited number of angles, which was also applied to tomographic X-ray data of a walnut.
Thankfully this data was made publicly available at~\cite{walnut}.
The data consists of a sinogram with 328 beams and 120 angles and an operator in the form of a sparse matrix.
The results of applying to these data can be seen in Figure~\ref{fig_walnut} the provided sparse matrix was used rather then the discretized Radon transform.  

\begin{figure}[h]
    \centering
    \begin{subfigure}[b]{0.4\textwidth}
        \includegraphics[width=\textwidth]{./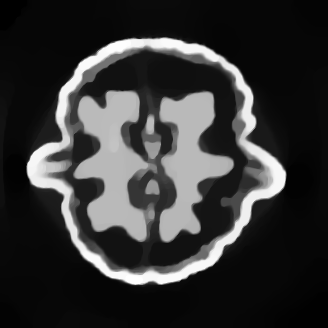}
        \caption{$\lambda=0.5$} \label{real_walnut0}
    \end{subfigure}
    \begin{subfigure}[b]{0.4\textwidth}
    \includegraphics[width=\textwidth]{./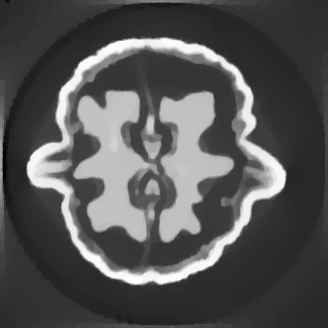}
    \caption{$\lambda = 0.05$} \label{real_walnut1}
    \end{subfigure}
    \caption[Regularized reconstruction from real data: Walnut]{Regularized reconstruction from real data: Walnut}
\label{fig_walnut}
\end{figure}

Raw data from a CT scan experiment of a box with test tubes of varying density was used.
The data format is described in Appendix~\ref{app_file}, it is called dataset02.
The extracted data was preprocessed using techniques from Appendix~\ref{app_data_ext} to produce sinogram data $\unull$ fitting the model of the radon transform a parallel beam used in this thesis.
The data associated with one z-position by the function \code{get\_z\_slice} are projections from $1152$ angles each with 736 entries corresponding to the physical detectors of the machine.
In the preprocessing phase this was downsampled to $360$ angles and $400$ beams for quicker processing producing a  $256 \times 256$ pixel image of the cross section.
The results can be seen in Figure~\ref{figs_rr_pdrq}.
\begin{figure}[p]
    \centering
    \begin{subfigure}[b]{0.4\textwidth}
        \includegraphics[width=\textwidth]{./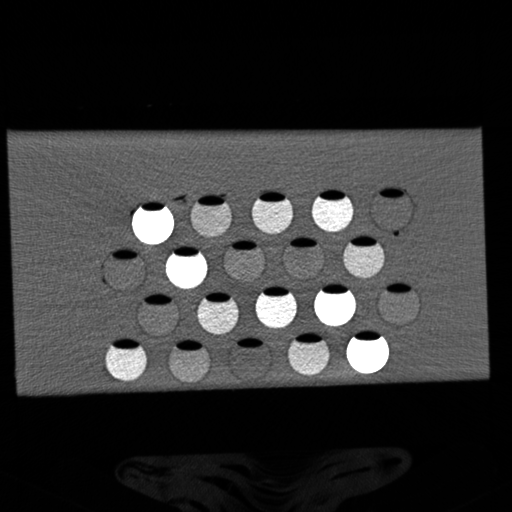}
        \caption{Original} \label{figs_rr_prov_reconst}
    \end{subfigure}
    \begin{subfigure}[b]{0.4\textwidth}
        \includegraphics[width=\textwidth]{./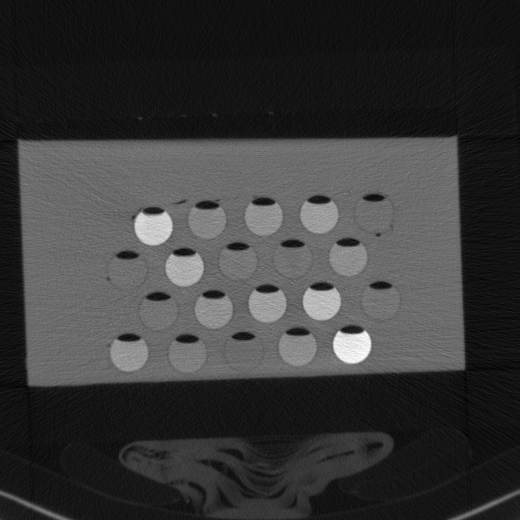}
        \caption{FBP full} \label{figs_rr_fpb_full}
    \end{subfigure}
    \begin{subfigure}[b]{0.4\textwidth}
        \includegraphics[width=\textwidth]{./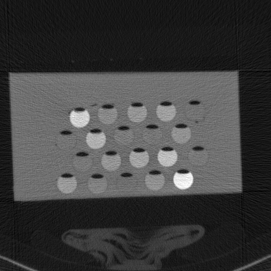}
        \caption{FBP undersampled}\label{figs_rr_fbp_under}
    \end{subfigure}
    \begin{subfigure}[b]{0.4\textwidth}
        \includegraphics[width=\textwidth]{./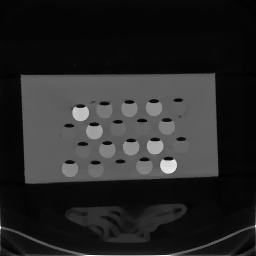}
        \caption{PDRQ} \label{figs_rr_pdrq}
    \end{subfigure}
    \caption[Regularized reconstruction from real data: Box]{\subref{figs_rr_prov_reconst} The reconstruction provided with the dataset.  \subref{figs_rr_fpb_full} Filtered Backprojection of all data obtained by the function \code{get\_z\_slice}. \subref{figs_rr_fbp_under} Filtered backprojection using undersampled data.  \subref{figs_rr_pdrq} Result of regularized reconstruction obtained with PDRQ with $\lambda = 0.5$ after 100 iteration using same undersampled data. }
\label{fig_results_real}
\end{figure}



\begin{figure}[h]
    \centering
    \includegraphics[width=.5\textwidth]{./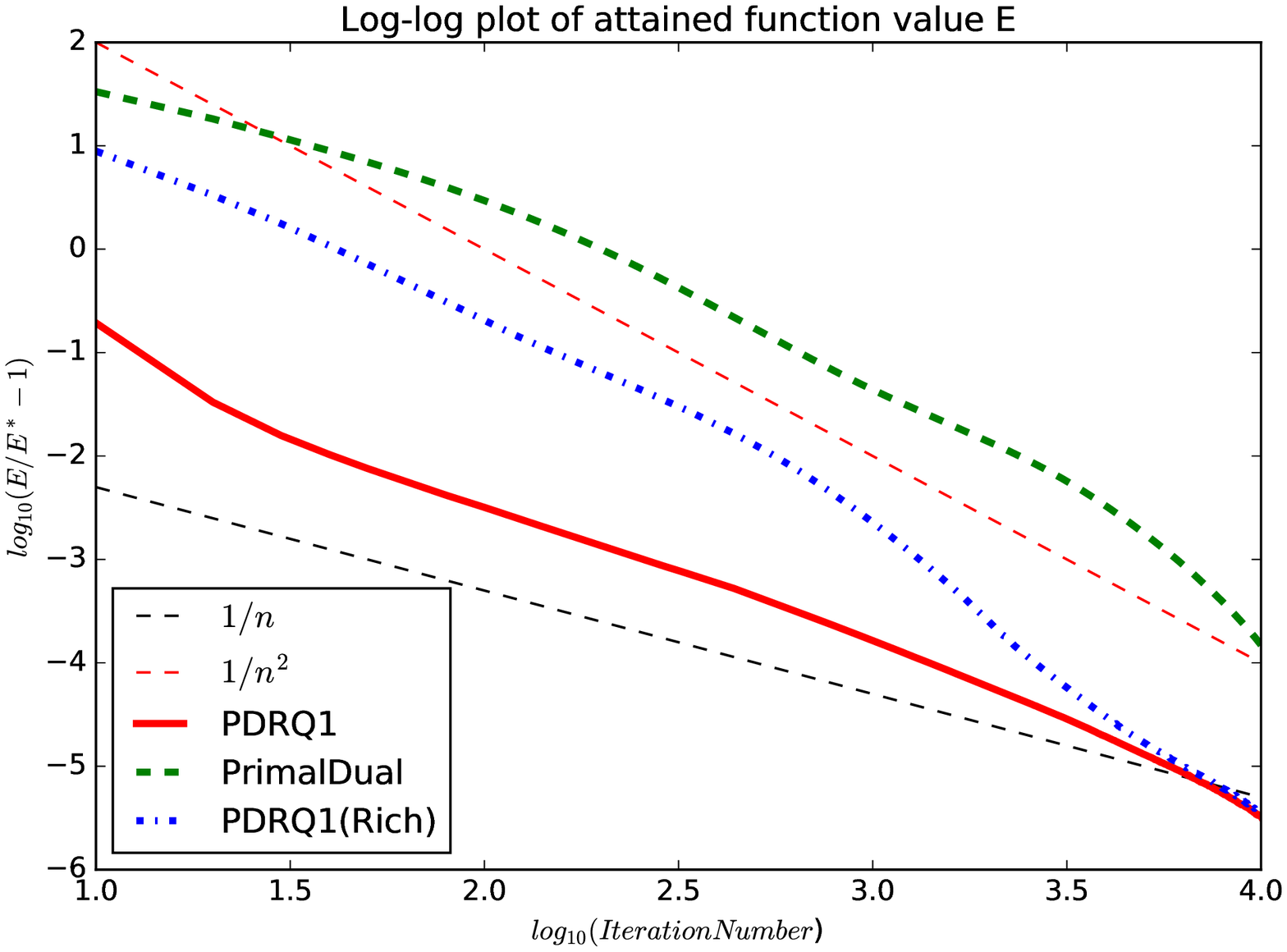}
    \caption[Log-log plot, PrimalDual vs Preconditioned]{Effects of preconditioning. Log-log plot of attained function value relative to smallest function value attained for phantom data with $\lambda =0.3$ shows convergence rate of the Primal-Dual Champolle-Pock algorithm and PDRQ using the norm preconditioner and simple Richardson preconditioning.}
    \label{fig_PDRQvsPD}
\end{figure}

\begin{figure}[h]
    \centering
    \includegraphics[width=.5\textwidth]{./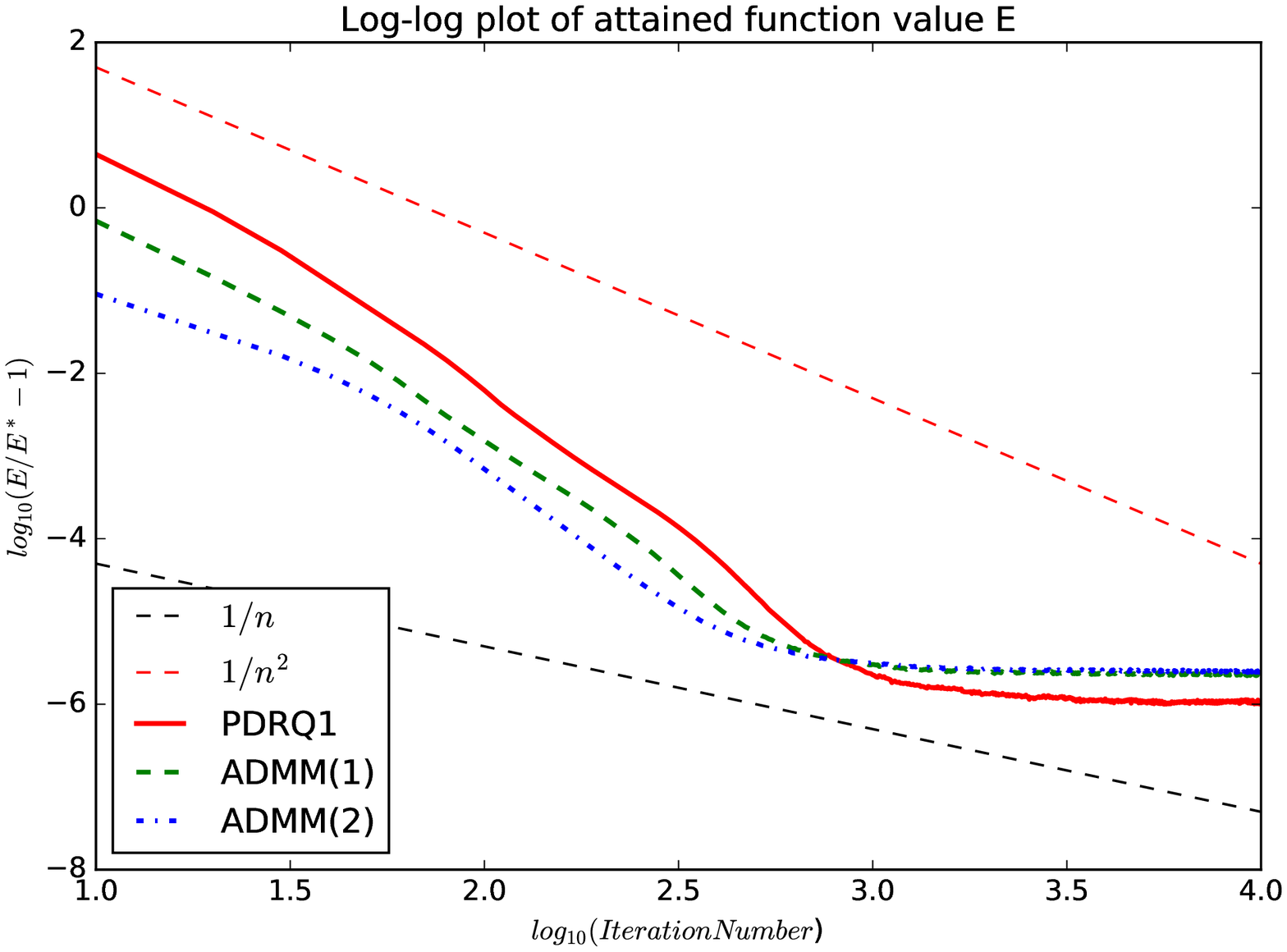}
    \caption[Log-log plot, ADMM vs PDRQ]{Better convergene on a different data set, using $\lambda = 0.1$ and the Radon transform from the ASTRA package. PDRQ is competitive with \myacro{ADMM}, note that each inner \myacro{PCG}-iteration requires one more application of the operator. Log-log plot of attained function value relative to smallest function value Algorithm~\ref{alg_PDRQS_const} for synthetic data without constraints. \myacro{ADMM} with one and two \myacro{PCG} iteration vs PDRQ. }
    \label{fig_admm_vs_pdrq}
\end{figure}

\FloatBarrier
\section{Metal artifacts}
\subsection{Synthetic data}
In this section artifacts are generated by damaging synthetic sinogram data by capping it, that is replacing values greater than a certain threshold $c$ by this value $c$.
This capping is the assumed origin of the metal artefacts which was used in Section~\ref{sec_ProbMod} to formulate the inequality constraints. 
Trying to reconstruct from such capped data using filtered backprojection does indeed produce artefacts similar to those produced by real metal inclusion.
For the experiment the Shepp-Logan phantom that was used in the previous section is now furnished with a metal inclusion, a block of value 3, as opposed to the usual values ranging from 0 to 1 representing organic tissue, is inserted in the phantom representing the much denser metal.
This phantom reconstruction was already done by the author using the Campolle-Pock algorithm in~\cite{Schiffer}, showing the feasibility by very slow convergence. 
This is significantly sped up requiring only a few hundred iteration to visibly reduce artifacts, using any of the preconditioned algorithms presented in this thesis, Figure~\ref{fig_syn_unconst_metal} shows the result using \myacro{PDRQ1} and with an inverse norm preconditioner.
\begin{figure}[p]
    \floatpagestyle{empty}
    \centering
    \begin{subfigure}[b]{0.4\textwidth}
        \includegraphics[height=\textwidth]{./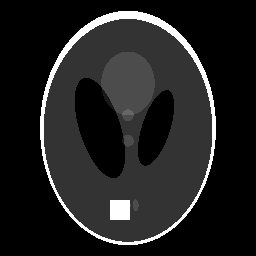} 
        \caption{Phantom $P_\ast$} \label{syn_metal_P}
    \end{subfigure}
    \begin{subfigure}[b]{0.4\textwidth}
        \includegraphics[height=\textwidth]{./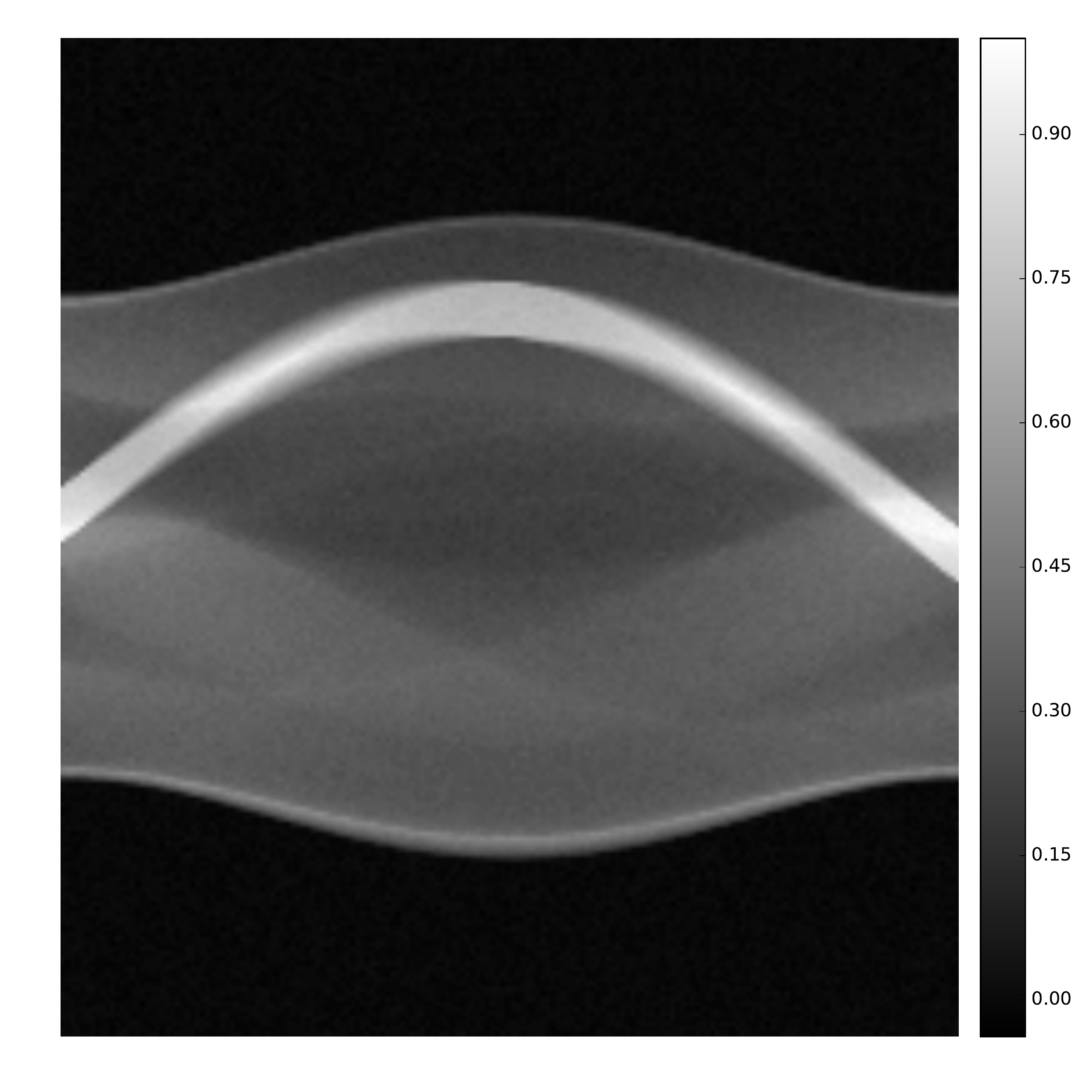}
        \caption{Sinogram $\Rh P_\ast$ with noise} \label{syn_metal_sino}
    \end{subfigure}
    \begin{subfigure}[b]{0.4\textwidth}
        \includegraphics[height=\textwidth]{./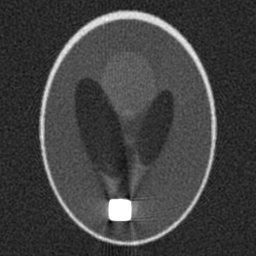}
        \caption{Fbp reconstruction} \label{syn_metal_fbp}
    \end{subfigure}
    \begin{subfigure}[b]{0.4\textwidth}
        \includegraphics[height=\textwidth]{./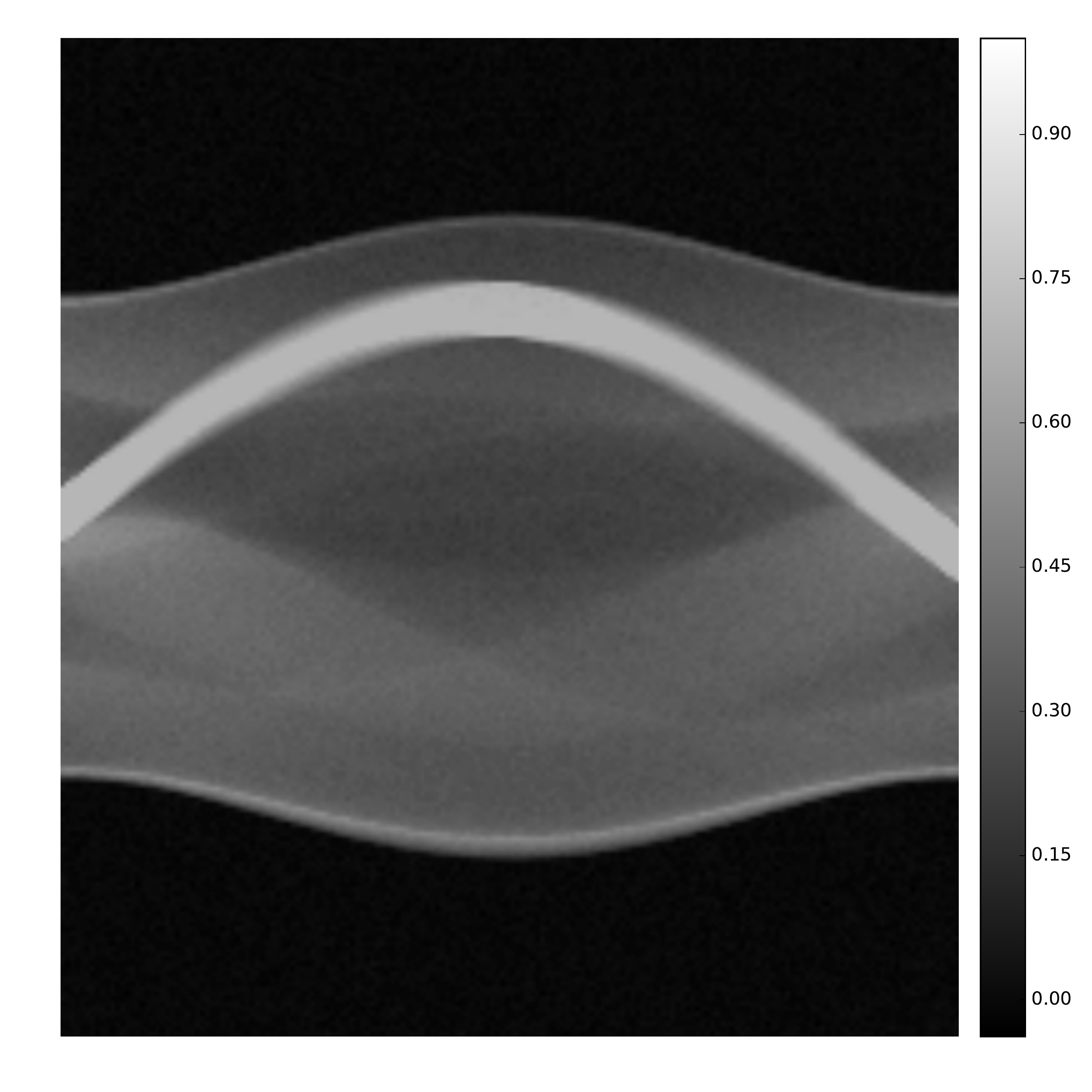}
        \caption{Noisy capped sinogram $\unull$} \label{syn_metal_sinocap}
    \end{subfigure}
    \begin{subfigure}[b]{0.4\textwidth}
        \includegraphics[height=\textwidth]{./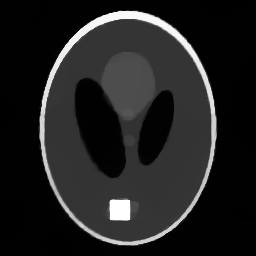}
        \caption{Result} \label{syn_metal_result}
    \end{subfigure}
    \caption[Metal artefact removal from synthetic data]{The discretized Radon transform is applied to the Shepp-Logan phantom \subref*{syn_unconst_p} producing a sinogram \subref*{syn_unconst_sinot}, after adding noise this provides the data $\unull$ in \subref*{syn_unconst_sino}. In \subref*{syn_metal_result} the solution of the unconstrained problem with $\lambda=0.3$ can be seen.} 
\label{fig_syn_unconst_metal}
\end{figure}
\FloatBarrier

\subsection{Real metal artifacts}
CT data from dataset02 also contains images of small metal parts, specifically in Figure~\ref{fig_metal_objects} a screw and a hex key can be seen.
Trying to reconstruct results in typical metal artifacts see Figure~\ref{fig_results_real_metal}.

The constraints on the sinogram $\unull$ in areas deemed to be affected by metal $\Dm_0$  where set depending on the value of the sinogram in that location, \eg $C = 0.8 \unull$ was chosen.
Determining which areas of the sinogram are affected by metal one way is to take all areas of the sinogram which exceed a certain level
Alternatively, what showed most success, is to perform a regular reconstruction then mark areas of the image domain where the estimated radio density exceeds a certain threshold, and then apply back projection to the marked areas to obtain areas in the sinogram which correspond to the presumed metal. 
This is done in the hopes that the process of estimating where metal is located is less sensitive to artefacts than the image reconstruction.
The solution of the so constrained problem can be seen in Figure~\ref{fig_results_real_metal}.
The reconstruction appears to be particularly impeded by parts of the object to be scanned outside of the field of view, this can be alleviated by reconstructing on a larger region, see Figure~\ref{fig_test_metal_ext_fov} for example.
\begin{figure}[h]
    \centering
    \includegraphics[width=\textwidth]{./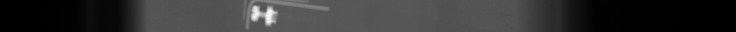}
    \caption[]{Metal objects visible in raw data}
    \label{fig_metal_objects}
\end{figure}

\begin{figure}[h]
    \centering
    \begin{subfigure}[b]{0.4\textwidth}
        \includegraphics[width=\textwidth]{./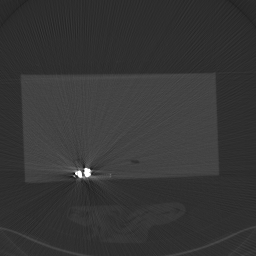}
    \end{subfigure}
    \begin{subfigure}[b]{0.4\textwidth}
        \includegraphics[width=\textwidth]{./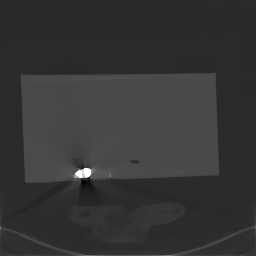}
    \end{subfigure}
    \caption[Metal artefacts in real data]{Left: Metal artefacts that occur when trying to reconstruct with the filtered backprojection. Right: Proposed techniques reduce artefacts somewhat.}
\label{fig_results_real_metal}
\end{figure}

\begin{figure}[h]
        \includegraphics[width=\textwidth]{./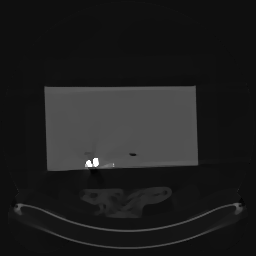}
                \caption[Metal artefact reduction real data]{Better artifact reduction: Extending the field of view and creating a mask from back-projecting a capped(and dilated by 5px)
        unconstrained reconstruction, with $C = 0.8 \unull$ }
        \label{fig_test_metal_ext_fov}
\end{figure}

\section{Conclusion}
The model for artefact reduction by introducing inequality constraints on the sinogram was presented and significantly sped up by preconditioning.
Novel preconditioners were presented as well as existing ones. 
The ability of the proposed technique to reduce metal artefacts in real data was demonstrated in a prototype setting.
Quality of the reconstructions surely could be improved by incorporating more technical knowledge of the raw data and its acquisition into the data processing.
Application to experimental data dedicated to testing for metal artifact reduction such as a real data of a phantom once with and once without metal inclusions under known calibration and with no out of view data could better determine the merit of the proposed method.  
Obvious extensions of the model would be to use more general regularization such as the total generalized variation.
Both sources of a dual energy CT scanner could be used by comparing against data from both in the discrepancy term, although this would not make use of their different energy spectra.
Further modeling of the cause of artefacts might need to be integrated in the model. 
It can be hoped that the results of this thesis can be improved upon to form viable technique for reducing metal artefacts or be integrated with existing models.

\appendix                       
\addpart*{Appendix}             
\chapter{Data extraction} \label{app_data_ext}
This section is dedicated to providing the data $u_0$ as is needed for the described algorithms using the described discretization of the Radon transform  given raw data from a modern CT machine which uses different imaging modalities then the idealized Radon transform.
While the Radon transform as described in Section~\ref{sec_radon_trhm} models a radiation source that emits parallel beams it is much more practical to design a machine with an (approximate) point source.
In this case the beams spread out like a fan.
A method the convert these fan beam data to parallel data is presented in \ref{ssec_fan2para}.
Additionally modern CT-scanners will not simply operate in one plane but rather in space such that each exposure produces data in multiple planes.
A point source and opposing detector -- either flat or a spherical segment -- rotate around the object to be scanned in a plane while moving along it, in what is called the z-direction, thus the source and detector follow a spiral trajectory.
A simple method of how real data from such a device can be preprocessed to be reconstructed into cross sections  with 2D Radon transform is presented in \ref{ssec_simple_data_extr}.
The advanced single slice rebinning(\myacro{ASSRB}), a  more sophisticated method to process the raw data, is presented in \ref{ssec_assrb}.

\section{Fan beam to parallel Beam(Fan2Para)} \label{ssec_fan2para}
In Section~\ref{sec_radon_trhm} the Radon transform was presented as a model for tomography where with each exposure parallel beams are emitted in practice however a fan beam geometry is used where a point source emits beams.
In order to convert data obtained by fan beam geometry to parallel beam a re-sampling operation is performed:
Data at desired coordinates in parallel beam geometry is obtained by converting to equivalent fan beam coordinates and interpolating using the given data in fan beam geometry.
This process is now described in detail, refer to Figure~\ref{fig_beam_geom} for an overview.
Considering the fan beam configuration of a CT machine where a point source $A$ rotates around the origin of which it has a fixed distance $d$.
The position of the source $A$ is measured by the angle $\varphi$ between the line $L$ from  the origin to $A$ and the horizontal x-Axis.
Then each beam emitted from $A$ will be described by the angle $\alpha$ between it and the line $L$.
As an example detector elements linearly spaced on a round detector, in the form of an arc of a circle, would correspond to beams whose angles $\alpha$ are linearly spaced, this is the case in most CT machines.
As in the parallel geometry where the offset $s$ was positive to the left of the center beam, \ie in the direction of $x_0$, and negative to the right here too $\alpha$ has a sign \eg in Figure~\ref{fig_beam_geom} the angle $\alpha$ is counted as positive as it is to the left of the center line $L$.
For a given beam $B$ in fan beam geometry with coordinates $(\alpha, \varphi)$ find the coordinates $(s, \theta)$ that this beam has in a parallel beam configuration.
From Figure \ref{fig_beam_geom} one can see that $\theta = \varphi - (\pi/2 - \alpha)$ as well as $s = d \sin(\alpha)$.
Thus the function that maps a beam from a fan beam configuration to a corresponding beam in a parallel configuration is given by $f \colon (\alpha, \varphi) \mapsto (s, \theta) = (d \sin(\alpha), \varphi - \pi/2 + \alpha)$, this coordinate transform  has the inverse given by $f^{-1} \colon (s, \theta ) \mapsto (\alpha, \varphi)=(\arcsin(\frac{s}{d}), \theta +\pi/2 - \arcsin(\frac{s}{d}))$.
To get an approximate value at a query point $P$ with desired coordinates $(s, \theta)$ in parallel beam space, the coordinates of $P$  are transformed to fan beam coordinates  $f^{-1}(s, \theta)$ then using \eg bilinear interpolation at this coordinates with the values of the collected data at positions $(\alpha, \varphi)$ then gives an approximate value for data at $P$.
This method also known as rebinning is used in the MATLAB function \code{fan2para}, but has also been implemented for the project of this thesis to allow for more flexibility.
\begin{figure}[ht]
        \centering
        \includegraphics[width=\textwidth,trim=0cm 8cm 0cm 9cm, clip]{./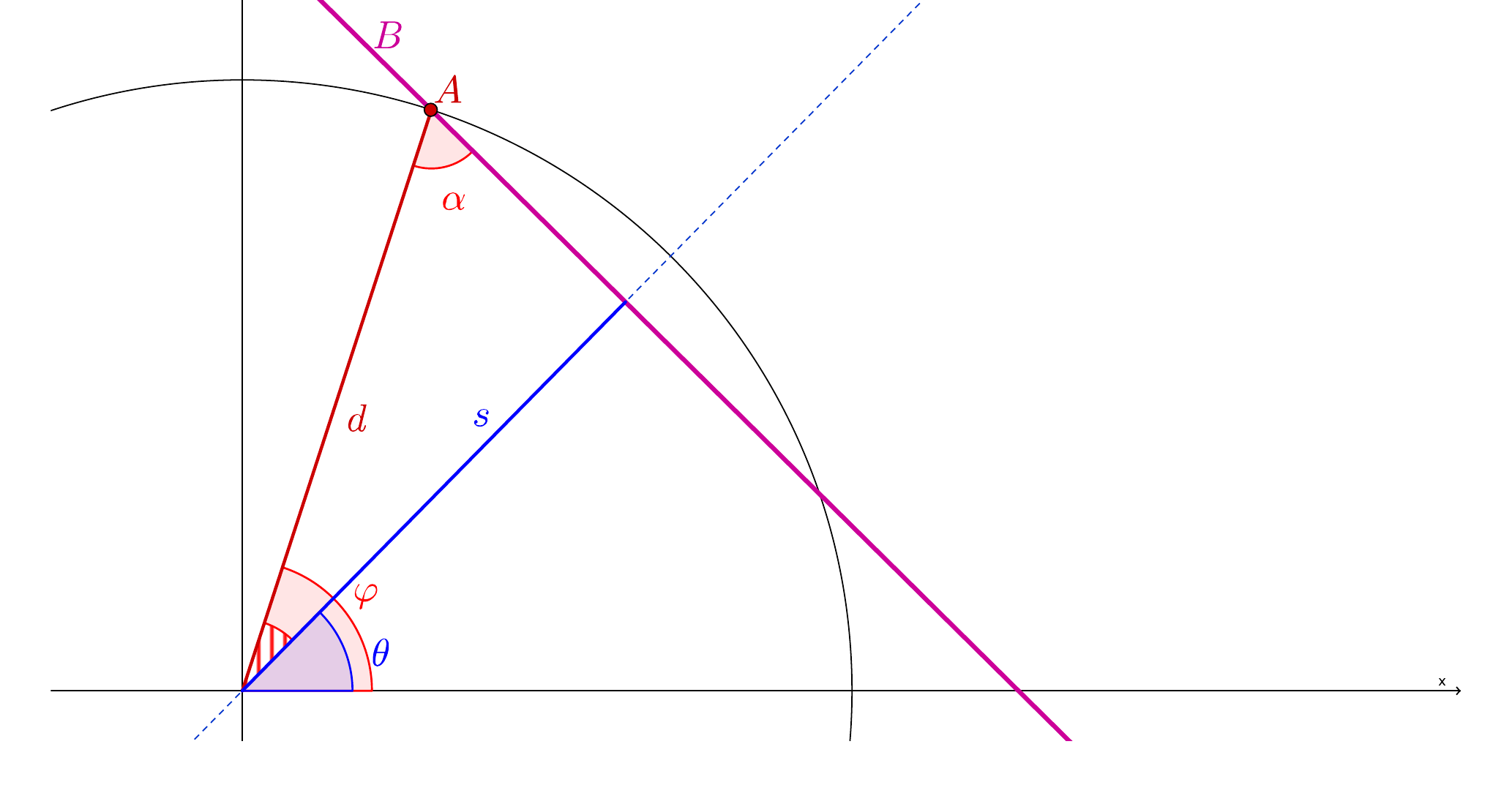}
                \caption[Fanbeam and paralell beam configuration ]{The Beam $B$ has the fan beam coordinates $(\alpha, \varphi)$ and parallel beam coordinates $(s,\theta)$. }
        \label{fig_beam_geom}
\end{figure}

\section{Simple data extraction} \label{ssec_simple_data_extr}
Modern CT machines do not simply work in one plane but collect data for multiple planes simultaneously this is achieved by a cone beam configuration consisting of a point source emitting rays in a cone and an opposing detector whose sensors are arranged in a 2-dimensional array.
One popular configuration, that is used in the data for this project, is that of a cylindrical detector, in which each row of the detector consists of equiangular arranged detectors and the rows are a fixed distance apart.
Thus from the source each row gives a fan beam their distance is usually not measured at the detector but at the center of rotation where this equal distance is called the \emph{slice width} $w$ as every fan beam represents a slice of the object to be scanned.
The data from every exposure is an array of values, each row having the data from one fan beam as described above, let this array be called a \emph{frame}, as interpreting each frame as an image and playing them gives a video as if the source where a camera rotating around the object to be scanned in a spiral path.

A very simple way of extracting the data associated with a z-position is to -- incorrectly -- assume that the data for each frame consists of slices corresponding to \emph{parallel} fan beams that are slice width apart and orthogonal to the z-axis.
This means it is assumed that there is a stack of parallel fan beams rather then the fan beams coming from a common source.
If the detector is in position $z$ then row number $i$ is in z-position $ z + i \cdot w$, to get data associated with a $z$-position $z_0$ all rows whose z-position is within $z_0 \pm 0.5 \cdot w$ are collected, thus giving data associated with z-position $z_0$ as if collected by a single row fan beam detector.
Then the data collected in this fashion can be converted to parallel beam data using the rebinning algorithm described above.
Note that this assumption is not true as the data is from a cone beam.
This very simple approach was also used in the data extraction in~\cite{Boas_2iter} as is explained in the appendix of the paper.

\section{Advanced data extraction: \myacro{ASSRB}} \label{ssec_assrb}
Another method to obtain a parallel beam sinogram from the raw cone beam data is presented in advanced single slice rebinning~\cite{Assrb}, which is based on~\cite{Ssrb}.
This method produces parallel beam data for a reconstruction plane that is tilted with respect to the z-axis to approximate the spiral path the gantry rotates around.
After reconstruction in several tilted planes the images can be interpolated to yield the usual image orthogonal to the z-axis.
\begin{figure}
        \centering
        \includegraphics[width=.25\textwidth]{./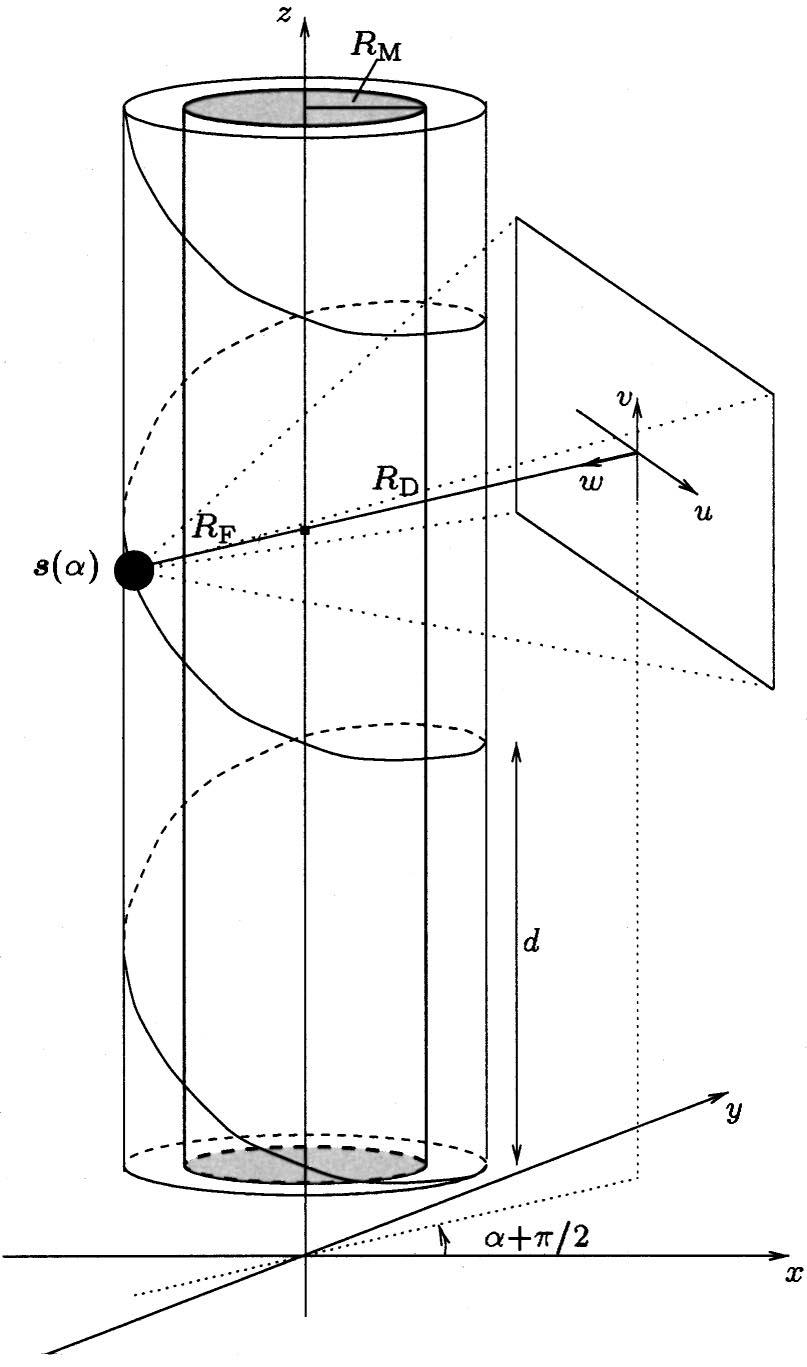}
        \caption{Overview of Geometry used in \myacro{ASSRB}. Graphic from~\cite{Assrb}}
        	\label{fig_assrb_overview}
\end{figure}
It is assumed that the path of the source is given by the spiral trajectory $ s(\alpha) = (d \sin \alpha, d \cos \alpha, \Delta z  \alpha /2 \pi)$ parametrized by the angle $\alpha$, where $d$ is the distance from the source to the center of rotation, called $R_F$ in the original paper.
Opposing the source is a flat detector with a distance to the source of $R_{FD} = d + R_D$, see Figure~\ref{fig_assrb_overview} this will be adjusted to the cylindrical detector geometry later on.
Since the raw data in the example have angles increasing by a fixed increment but the z-positions do not, the angles are fitted to the z-position by linear regression.
This gives $\Delta z = \SI{19.15}{\milli\meter}$ which is the increase in z-position per one rotation.
This also allows us to consider the z-position $z_0$ as input and find the associated angle $\alpha_0$.
The angle $\gamma$, at which the reconstruction plane $R$ is tilted around the line from the source to the center of the detector, is chosen to minimize the area between $R$ and the spiral path when projected onto the detector. It is given by
\begin{equation*}
  \tan \gamma = \Delta z \frac{\alpha^\ast}{2 \pi R_F \sin \alpha^\ast}
  \quad \text{with } \quad
  \alpha^\ast = \arccos \ndel{\Delta z \, \frac{ 1 + \cos (f \pi)}{2}}.
\end{equation*}
For coordinates $\theta \in [-f\pi, f\pi]$ and offset $s \in [-R_M, R_M ]$ in the desired parallel geometry the angle of the cone beam data to be used is unsurprisingly the same as in fan2para case
\begin{equation*}
  \alpha'(\theta, s) = \theta + \arcsin \ndel{\frac{s}{d}}.
\end{equation*}
This gives the coordinates of the desired beam assuming a flat detector as
\begin{equation*}
  u_F(\theta, s, \alpha') = \frac{R_{FD}}{d} \frac{s}{\cos(\alpha' - \theta)}
\end{equation*}
and
\begin{equation*}
  v_F(\theta, s, \alpha') = \frac{R_{FD}}{d}
  \ndel{\frac{s \cos \alpha' \tan \gamma}{\cos(\alpha' - \theta)} - \Delta z \frac{\alpha'}{2 \pi}}.
\end{equation*}
The extracted data is given on a cylindrical detector with coordinates $(\beta, S)$ where $\beta$ is the angle between beams in the $u$-direction and $S$ the distance between fans at the rotation center or nominal slice thickness.
Thus the obtained coordinates for the flat detector have to be transformed to the cylindrical coordinates by
\begin{equation*}
  \beta = - \arctan \ndel{ \frac{u_F}{R_{FD}} } \quad \text{and } \quad  b =  v_F \frac{d}{R_{FD}}.
\end{equation*}
Note that this is inconsistent with the formula for $b$ given in the appendix A in~\cite{Assrb}.
The sinogram is obtained by -- \eg bilinear -- interpolation at the points
\begin{equation}
  \big( \alpha_0 + \alpha'(\theta, s), \, \beta(\theta, s), \, b(\theta, s) \big)
\end{equation}
Finally a correction factor of
\begin{equation*}
  p(\theta, s) = \frac{\cos \gamma}{\sqrt{\sin^2 \theta + \cos^2 \gamma \cos^2 \theta}}
\end{equation*}
needs to be applied to the resulting sinogram.
This method for data preprocessing was implemented and tested but showed no significantly improved image quality.
The \myacro{AMPR}-approach is an extension and generalization of advanced single-slice rebinning.~\cite{ohnesorge2006multi}

\chapter{Data structure} \label{app_file}
Given was raw data from a dual source CT scanner, likely a Siemens Definition Flash, which needed to be reverse engineered due to lack of documentation, it is called dataset02.
For the reconstructions only one of the two sources was used.
Files with the file extension \code{*.ptr} are container files, all entries are big endian.
After the 8 byte magic number \code{'XABC2000'} there is a integer of type \code{uint64} describing the number of entries in the container, in dataset02 there where 44 entries.
This is followed by a description of each entry: first 16 bytes are unknown followed by a \code{uint64} offset indicating the start of the entry and a \code{uint64} giving its length in bytes.
The entries are of different filetypes, the first entry is a text file containing a hardware component list, the second entry contains parameters -- as well as the model type number P47A. 
The third entry contains parameters in XML-format, some interesting values are summarized in Table~\ref{tab_parameters}, for example ChannelsPerLineA denotes the number of detector elements of detector A and can be thought of as the number of beams penetrating the object which are measured.
\begin{table}[ht]
\parbox{.45\linewidth}{
    \centering
    \begin{tabular}{ll}
        Parameter 	        & value \\ \hline
        ChannelsPerLineA  	& 736			\\
        ChannelsPerLineB  	& 480			\\
        FanBeamGrid 		& 0.067864		\\
        RadiusFocusPath		& 595.000000		\\
        MeasurementFieldA 	& 500.000000		\\
        MeasurementFieldB 	& 332.000000 	\\
        FlyingFocalSpot		&SciFFSZ
    \end{tabular}
    \caption{Some parameters} \label{tab_parameters}
}
\hfill
\parbox{.45\linewidth}{
    \centering
    \begin{tabular}{ll}
    length(bytes)  & block content\\ \hline
    640     &metadata           \\
    512     &line information	\\
    32      &image header A     \\
    47104   &data A             \\
    3968    &?                  \\
    32      &image header B     \\
    30720   &data B             \\
    1920    &?               	\\
    \end{tabular}
    \caption{Data structure of entry six for one frame}  \label{tab_blocks_in_part5}
}
\end{table}
The bulk of the data is contained in part six which contains for each of the 19084 frames one image from each of the two detectors accompanied by meta data described in Table~\ref{tab_blocks_in_part5}.
The image data for each frame is given in dimensions \code{n\_slices} $\times$ \code{n\_beams} of data type \code{uint16}, for example one image of data A has dimension $32 \times 736$ which has a length of 47104 bytes.  
The metadata block for each frame consists of $640$ bytes, some of the identified values important for processing the data are given in Table~\ref{tab_meta_data}.
The first few hundred frames are invalid and were likely used for calibration, for example the entries for the angle values of system B are negative for the first 304 frames after which they start to increase by fixed amount and are consistently in the range of angle in degrees.

\begin{table}[h]
    \centering
    \begin{tabular}{lllll}
    parameter   & offset 	& data type      & System  & range \\ \hline
    IdFrame     & 12	        & \code{uint16}  &    &                 \\
    NoBeams     & 32	        & \code{uint16}  & A  &                 \\
    angles      & 36	        & \code{float32} & A  & $0$-$359.6875$  \\
    z-position  & 40	        & \code{int32} 	 &    &$-907040$ - $-1224360$ \\
    adjust factor 	& 48    & \code{float32} & A  &                 \\
    adjust offset 	& 56    & \code{float32} & A  &                 \\
    NoBeams	        & 64    & \code{uint16}  & B  &                 \\
    angles	        & 68    & \code{float32} & B  & $0$-$359.69373$ \\
    adjust factor 	& 80    & \code{float32} & B  &                 \\
    adjust offset 	& 88    & \code{float32} & B  &                 \\
    angles 	        & 100   & \code{uint16}  & A  &                 \\
    z-position/10 	& 184   & \code{int32} 	 & A  &                 \\
    \end{tabular}
    \caption{Meta data positions and data types} \label{tab_meta_data}
\end{table}

\section{Intensity correction factor}
Each frame uses the full range of the data type \code{uint16} in which it is stored, suggesting that these values have been scaled as the dynamic range will vary with z-position.
Considering that the sum over all values of a frame $i$ -- denoted by $S(i)$ -- should represent the integral over the density of the objects in view and this is independent of the angle under which the projection is made, the sums over the frames should vary very little and do so continuously, representing only new material coming into view and leaving the view in z-direction.
A presumed scaling correction factor -- \eg corresponding to exposure time -- $c(i)$ should be such that $S(i) c(i)$ is approximately constant. 
The meta data values stored at offset $48$ are a good candidate but do not quite fit.
Noting the stripe in the uncorrected sinogram \ref{fig_scal_sino_unsc} from column 240 to 340 as well as some smaller column errors correspond to the changes in the values of the metadata \code{adjust\_offset} suggest a second correction parameter.
Thus the correction is suspected to be of the form $\code{data}*\code{adjust\_factor}*2^{-16} + \code{adjust\_offset}$, the result of this correction can be seen in \ref{fig_scal_sino_corect}.
Note remaining artifacts at the edge of the stripe and some overcorrection at the other smaller stripes.
Additionally the units of data remain unknown.

%

\begin{figure}[p]
        \centering
  		\includegraphics[width=.8\textwidth]{./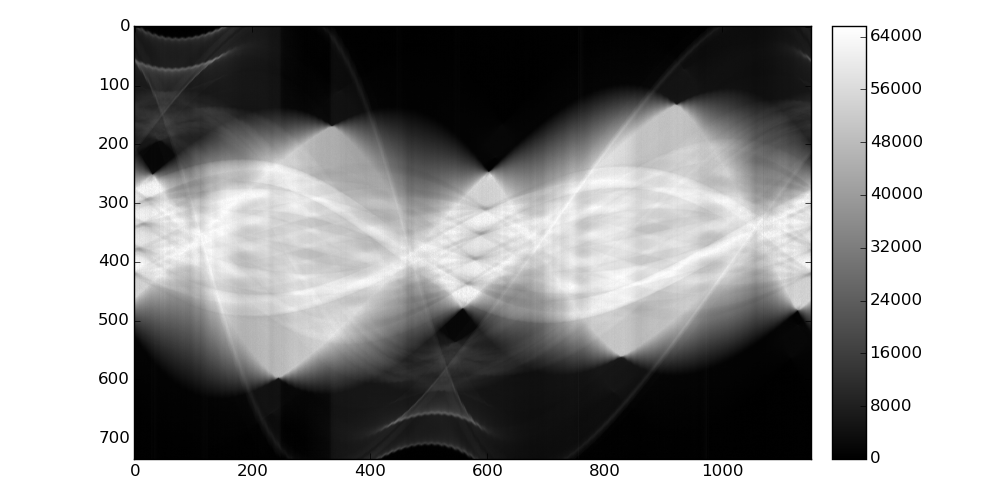}
  		\caption{Uncorrected sinogram\label{fig_scal_sino_unsc}}
  		\includegraphics[width=.8\textwidth]{./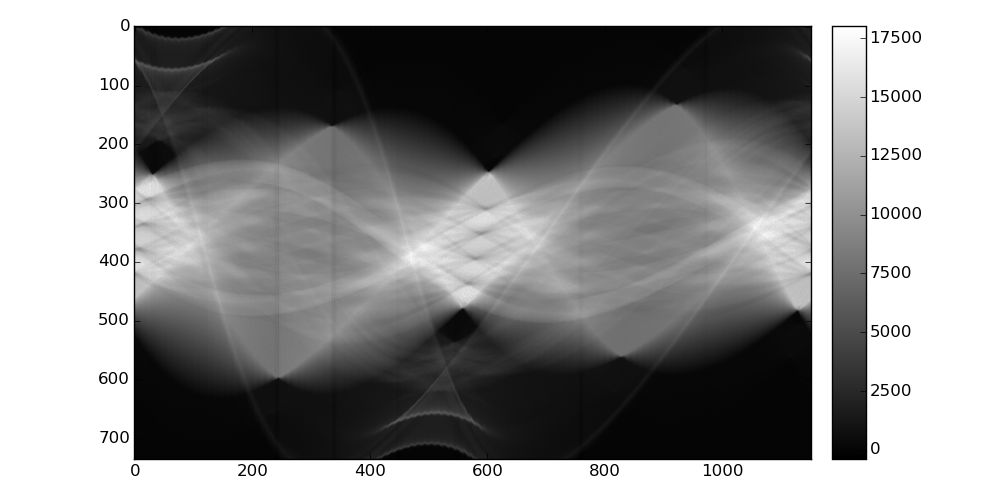}
  		\caption{Sinogram with correction\label{fig_scal_sino_corect}}

\end{figure}

\section{Focal spot correction: z-flying focal spot}
Oscillating behavior of the frames indicates that a technology called `z-sharp'
or `z-flying focal point' ~\cite{kachelriess2006flying} was used.
This is also by indicated by the parameter FlyingFocalSpot in Table~\ref{tab_parameters}.
This technique consist of changing the position of the X-ray source slightly to produce two projections with 32 rows each.
This is done in such a way that when the two projections are interleaved, that is alternating the rows of projection $i$ and $i+1$, the result is as if a single 64 row detector were used.
\begin{figure}
    \centering
    \includegraphics[width=.8\textwidth]{./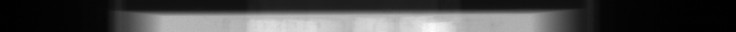}
    \includegraphics[width=.8\textwidth]{./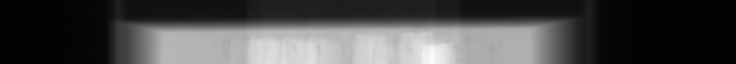}
    \caption{Frame 47000 and interleaved with the next frame \label{fig_single_frame_p_inter}}
\end{figure}

Details from~\cite{kachelriess2006flying} and~\cite{kyriakou2006impact} indicate
that two adjacent frames can indeed be interleaved while still belonging to different angles.
Additionally one has to correct for a change in radial distance
\begin{equation*}
\partial R_F = \frac{1}{4} S \frac{R_F + R_D }{R_D \tan{\phi}}
\end{equation*}
where the anode angle $\phi$ is assumed to be $\ang{7}$,
see figure~\ref{fig_focal_spot_deflection_kachelriess}.
This slight change of $ \Delta R_F = 2 \partial R_F = \SI{5.4}{\milli\metre}$
of the distance from the source to the detector results in visible jumps when interleaving
two adjacent frames
see figure~\ref{fig_frame_with_radial_correction}.
Approximating the true ray $a$ with angle $\beta'$ by a virtual ray
$b$ with angle $\beta$ -- see figure~\ref{fig_radial_correction} -- results in the correction
\begin{equation*}
\beta = \frac{R_{FD} + \Delta R_F}{ R_{FD} } \beta'.
\end{equation*}
This is done by assuming the extracted values are at positions $\beta'$ and interpolating
at corrected positions $\beta$.
\begin{figure}
    \centering
    \includegraphics[width=.5\textwidth]{./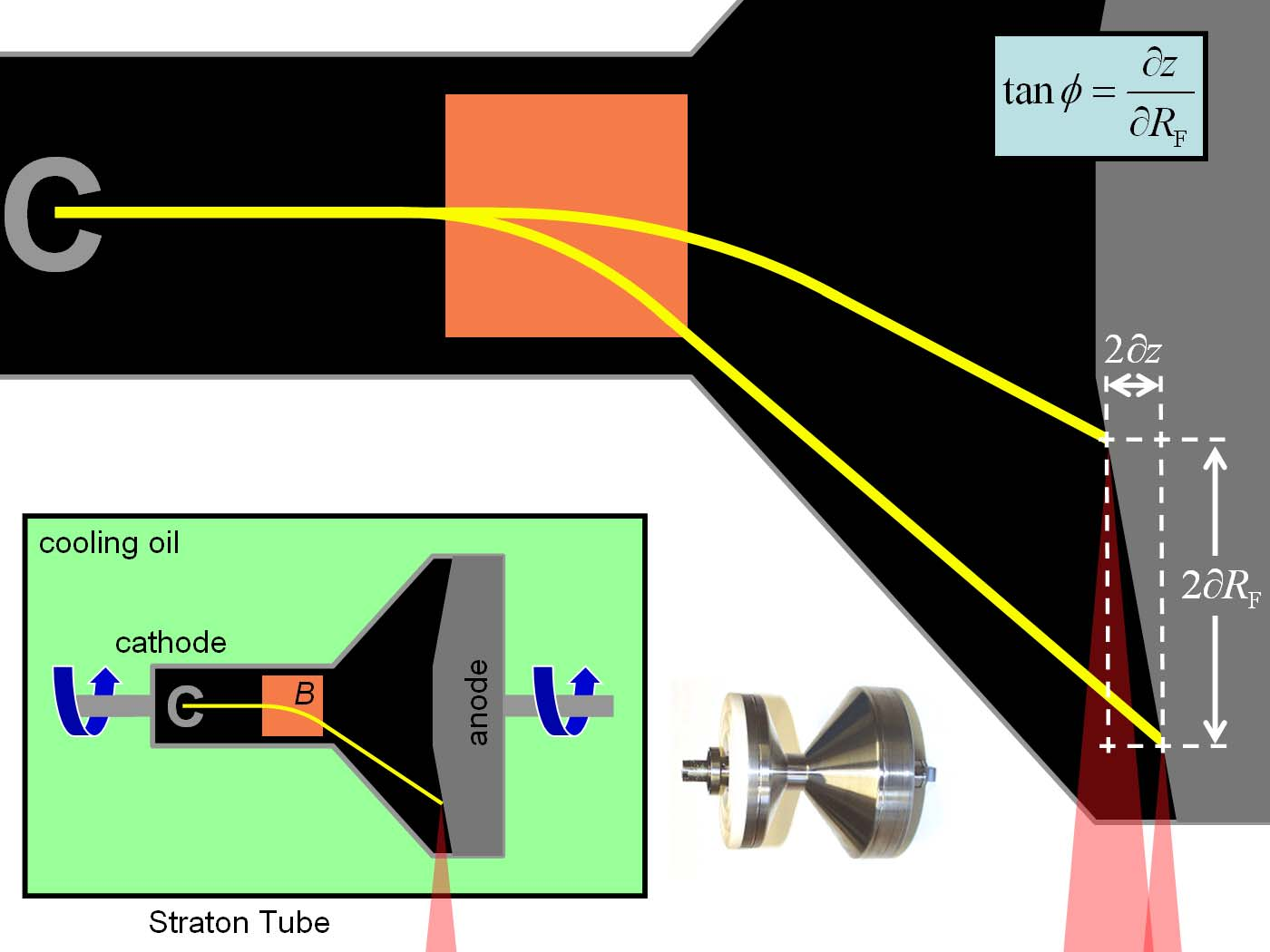}
    \caption[Flying focal spot]{A flying focal spot is generated by deflecting the beam.
        This results in a change in z-position as well a change in radial position
        \ie the distance to the rotation center. Graphic from~\cite{kachelriess2006flying}.
    \label{fig_focal_spot_deflection_kachelriess}}
\end{figure}

\begin{figure}
    \centering
    \includegraphics[width=.5\textwidth]{./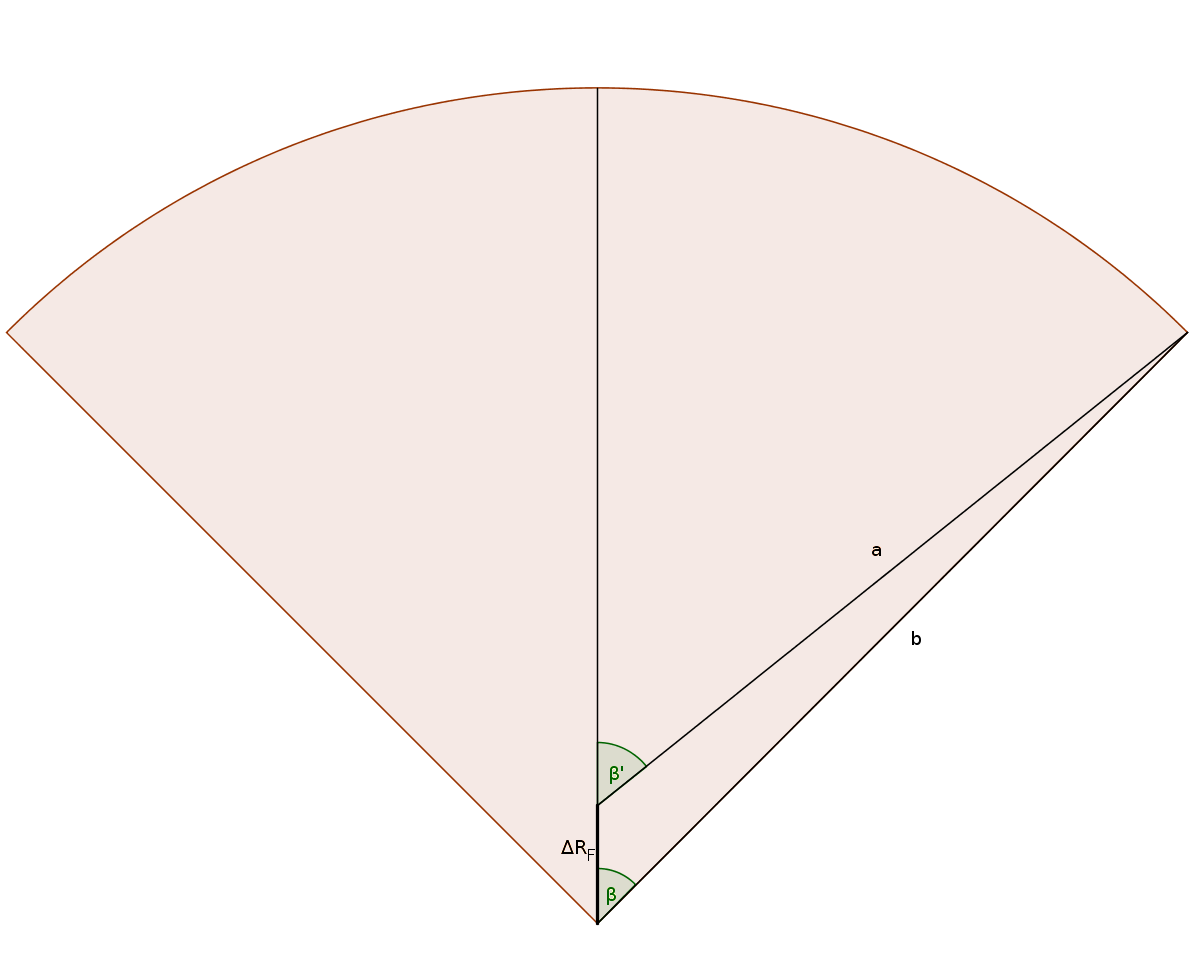}
    \caption[Radial Correction of beams length]{The true ray $a$ with angle $\beta'$ is approximated by a virtual ray $b$ with angle $\beta$.
    \label{fig_radial_correction}}
\end{figure}
\begin{figure}
    \centering
    \includegraphics[width=.4\textwidth]{./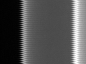}
    \includegraphics[width=.4\textwidth]{./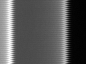}
    \includegraphics[width=.4\textwidth]{./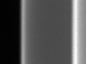}
    \includegraphics[width=.4\textwidth]{./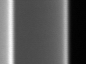}
    \caption[Interleaved frames]{Left a detail of frame $8500$ interleaved with frame $8501$, top without any correction bottom with radial distance correction, right the same for frames $7820- 7821$. The correction works on the right and left side, shifting one projection against the next would not.}
    \label{fig_frame_with_radial_correction}
\end{figure}

\section{GPU Radon implementation} \label{gpu_radon}
For a GPU-parallelization each thread is assigned one pixel of the image and calculates the value of the discrete gradient and divergence after reading in values at neighboring pixels.
In a similar fashion the linear back projection will only perform read operation on the sinogram data and thus has no write conflicts.
For the parallelization of the Radon transform again each thread is assigned one pixel and calculates its projection in the sinogram.
To resolve the resulting write conflict, different thread are assigned different angles to ensure that only one thread writes to any one column of the sinogram at a given time.
Cycling through the angles until every thread performed its operation on every angle - where threads pause if not enough angles are given - completes the operation.
Additionally each thread block is assigned its own sinogram to write to, and in a latter step summing these up to give the final result.

\FloatBarrier

\nocite{*}
\printbibliography              

\end{document}